\documentclass[a4paper,11pt,headings=standardclasses]{article}%
\title{Magnetic geodesics, Hodge Laplacian eigenvalues, and isoperimetric inequalities}

\usepackage[T1]{fontenc}
\usepackage{comment}
\usepackage{microtype}
\usepackage{pinlabel}
\usepackage{float}
\usepackage[hypertexnames = false]{hyperref}
\usepackage{subfig}
\usepackage{bbm}
\usepackage{amsmath, amssymb,amsthm,thmtools,cite,mathtools,setspace,enumitem,tikz-cd}

\usepackage{thm-restate}
\usepackage{bm}
\usepackage[T1]{fontenc} 
\usepackage{tikz-cd}

\usepackage{AlegreyaSans}
\usepackage[osf]{cochineal} 
\usepackage[scaled=.95]{cabin} 
\usepackage[utopia,vvarbb]{newtxmath}

\makeatletter
\newcommand\RedeclareMathOperator{%
	\@ifstar{\def\rmo@s{m}\rmo@redeclare}{\def\rmo@s{o}\rmo@redeclare}%
}
\newcommand\rmo@redeclare[2]{%
	\begingroup \escapechar\m@ne\xdef\@gtempa{{\string#1}}\endgroup
	\expandafter\@ifundefined\@gtempa
	{\@latex@error{\noexpand#1undefined}\@ehc}%
	\relax
	\expandafter\rmo@declmathop\rmo@s{#1}{#2}}
\newcommand\rmo@declmathop[3]{%
	\DeclareRobustCommand{#2}{\qopname\newmcodes@#1{#3}}%
}

\@onlypreamble\RedeclareMathOperator
\makeatother

\DeclareSymbolFont{bbold}{U}{bbold}{m}{n}

\newtheorem{thm}{Theorem}

\numberwithin{thm}{section}

\newtheorem{lem}[thm]{Lemma}

\newtheorem{prop}[thm]{Proposition}

\newtheorem{cor}{Corollary}
\declaretheorem[style=remark]{remark}

\DeclareMathOperator{\scl}{{scl}}

\DeclareMathOperator{\vol}{vol}
\DeclareMathOperator{\inj}{inj}

\DeclareMathOperator{\area}{area}

\renewcommand{\H}{{\mathbb H}}

\newcommand{\R}{{\mathbb R}}

\newcommand{\cA}{\mathcal{A}}

\newcommand{\cG}{\mathcal{G}}\newcommand{\cH}{\mathcal{H}}

\newcommand{\cS}{\mathcal{S}}

\newcommand{\e}{\varepsilon}

\newcommand{\g}{\gamma}
\renewcommand{\a}{\alpha}
\renewcommand{\L}{\Lambda}

\newcommand{\fm}{\mathfrak m}

\newcommand{\norm}[1]{\left\lVert#1\right\rVert}

\renewcommand{\d}{ \partial}

\definecolor{darkmidnightblue}{rgb}{0.0, 0.2, 0.4}
\hypersetup{
    colorlinks=true,
    linkcolor=darkmidnightblue,
    citecolor=darkmidnightblue,
    filecolor=darkmidnightblue,
    urlcolor=darkmidnightblue
}

\author{Cameron Gates Rudd}

\DeclareMathOperator{\sarea}{sarea}

 \date{}
\begin{document}
\maketitle

\begin{abstract}
An isoperimetric constant relating length and stable area, or alternatively for hyperbolic manifolds, length and stable commutator length, serves as a Cheeger constant for the smallest eigenvalue of the Hodge Laplacian acting on coexact 1-forms. Using properties of the magnetic geodesic flow associated to the differential of a coexact eigenform, and its behavior at Mañé's critical energy level, we give new proofs of these Cheeger-like inequalities, with improved (and explicit) constants and volume dependence. We also make a few observations about the relationship between Mañé's critical values and the eigenvalues, when the manifold is hyperbolic.
\end{abstract}

The smallest eigenvalue of the Hodge Laplacian acting on coexact 1-forms is a fundamental, but difficult to grasp, analytic invariant of a Riemannian manifold. Recently, there has been considerable interest in understanding the behavior of these eigenvalues geometrically, particularly for hyperbolic 3-manifolds, where a large coexact spectral gap is related to nonexistence of irreducible solutions to the Seiberg-Witten equations and to torsion in homology. See \cite{BSV, LipnowskiStern, Rudd, BoulangerCourtois,LinLipnowski, bootstrap, princeton-team, Zung, LL2}  for recent work in these directions. 

In this paper, we refine and give new proofs of analogues of Cheeger's inequality for the Hodge Laplacian acting on coexact 1-forms. In Theorems \ref{thm:intro} and \ref{thm:intro:area}, we relate the coexact spectral gap to isoperimetric constants that measure the complexity of filling loops by surfaces. Additionally, for hyperbolic 3-manifolds, we obtain explicit constants in these theorems. Explicit constants for such estimates have applications; see for instance \cite{Lin}.
Our methods are very different from those used in previous such estimates (see \cite{LipnowskiStern, Rudd, BoulangerCourtois, Zung}), which allows for much better control over the constants appearing. 

Our approach is to apply the weak KAM theory to a flow associated to an eigenform of the Laplacian (or in the three dimensional case, the curl operator).
The differential of such an eigenform, viewed as a magnetic field, defines a deformation of the geodesic flow, whose orbits are called magnetic geodesics. By relating various quantities and geometric objects that naturally arise in studying these flows, we obtain our main theorems. Of particular importance are Mañé's critical values, which record energy levels at which the dynamics of these flows change.

   For hyperbolic manifolds with trivial first Betti number, we consider the stable isoperimetric constant $\rho(M)$, introduced in \cite{LipnowskiStern}, given by $$\rho(M):=\inf_{\substack{\g\neq1\\ [\g]=0}}\frac{\ell(\gamma)}{\scl(\g)},$$ where the infimum runs over all nontrivial rationally nullhomologous closed geodesics. This constant compares the (algebraic/topological) stable commutator length function $\scl$, defined for rationally nullhomologous elements of the fundamental group, with the (geometric) hyperbolic geodesic length function $\ell$. We note that as $\scl$ is really a function on the fundamental group of such a manifold, by using the holonomy representation to compute lengths, this constant can be obtained from the fundamental group.

Our first result is the following; compare with \cite{LipnowskiStern} and \cite{Rudd}.

 \begin{thm}[cf. Theorem \ref{thm:main}]\label{thm:intro}
         Let $M$ be a closed oriented hyperbolic $n$-manifold with $b_1(M)=0$ and $\inj(M)>r.$ Let $\lambda^*$ be the smallest eigenvalue of the Hodge Laplacian acting on coexact 1-forms and assume $\lambda^*<1$. Then either $$\frac{\rho(M)}{8\pi C_r\sqrt{\vol(M)}}\leq\sqrt{\lambda^*},$$ or else, $$\frac{1}{8C_r^2\vol(M)}\leq \sqrt{\lambda^*},$$ where  $C_r$ is the constant from the mean-value-inequality (see Proposition \ref{prop:MV} or Theorem \ref{thm:appendix}).
\end{thm}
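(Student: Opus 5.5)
Let $\alpha$ be a coexact $1$-form with $\Delta\alpha=\lambda^*\alpha$, normalized so that $\|\alpha\|_{L^2}=1$. Set $\beta=d\alpha$; it is exact, and $\|\beta\|_{L^2}^2=\langle d^*d\alpha,\alpha\rangle=\lambda^*$. We regard $\beta$ as a magnetic field and use the Lagrangian $L(x,v)=\tfrac12|v|^2+\alpha_x(v)$. Its Euler--Lagrange flow is the magnetic geodesic flow of $\beta$.

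The plan is to bound Mañé's critical value $c(L)$ of the abelian cover from two sides. Since $b_1(M)=0$, the abelian critical value is the same as the one computed on $M$ with only null-homologous loops.

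\textbf{Upper bound.} Weak KAM theory (Fathi, Contreras--Iturriaga--Paternain) gives
\[
c(L)=\inf_{u}\sup_x \tfrac12|du+\alpha|^2\le \tfrac12\|\alpha\|_\infty^2 .
\]
To control $\|\alpha\|_\infty$, we use that $\alpha$ satisfies an elliptic equation with $\lambda^*<1$ and $\inj(M)>r$. The mean-value inequality, Proposition \ref{prop:MV} (or Theorem \ref{thm:appendix}), then gives $\|\alpha\|_\infty\le C_r\|\alpha\|_{L^2}=C_r$. Hence $c(L)\le C_r^2/2$.

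\textbf{Lower bound, via Mañé's characterization.} The critical value is the infimum of $k$ such that $\int_\gamma (L+k)\ge0$ for all closed curves $\gamma$, here null-homologous ones. Take a closed geodesic $\gamma$ that is rationally null-homologous, and traverse it at constant speed $s$ in time $T=\ell/s$. Then
\[
\int_\gamma (L+k)=\frac{\ell s}{2}+\frac{k\ell}{s}+\int_\gamma\alpha .
\]
Minimizing over $s$ gives the quantity $\ell\sqrt{2k}+\int_\gamma\alpha$. Since $\gamma$ is null-homologous, $\int_\gamma\alpha=\int_S\beta$ for any filling surface $S$. To bound this by $\scl$, we pull $\beta$ back to a pleated or least-area surface $S$ bounding $\gamma^m$ with $-\chi(S)/2m$ close to $\scl(\gamma)$. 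Gauss--Bonnet gives $\area(S)\le 2\pi(-\chi(S))$, so
\[
\Bigl|\int_\gamma\alpha\Bigr|\le \frac{1}{m}\|\beta\|_\infty\,\area(S)\le 4\pi\,\scl(\gamma)\,\|\beta\|_\infty .
\]

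The main obstacle is that this uses $\|\beta\|_\infty$, whereas we only control $\|\beta\|_{L^2}=\sqrt{\lambda^*}$. I would first try to bound $\|d\alpha\|_\infty$ by mean-value applied to $\beta$, which is itself an eigenform of $\Delta$ with the same eigenvalue, so that $\|\beta\|_\infty\le C_r\sqrt{\lambda^*}$. This sup-bound is also where the volume enters, because the natural normalization is the one that keeps $\|\alpha\|_\infty$ comparable to $1/\sqrt{\vol}$ after rescaling.

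\textbf{Extracting the dichotomy.} The second ingredient is a lower bound on $c(L)$ that makes sense at all. Choose the normalization $\sup|\alpha|=1$ at some point $x_0$, which forces $\|\alpha\|_{L^2}\ge 1/(C_r)$. Then test Mañé's inequality on a short loop near $x_0$, or use that $c(L)\ge \tfrac12\max|\alpha|^2$-type bounds fail unless $\beta$ is large. This produces a dichotomy.

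\emph{Case 1:} some null-homologous geodesic witnesses $c(L)$ via the length--$\scl$ comparison above. This gives roughly $\rho(M)\le 8\pi C_r\sqrt{\vol}\sqrt{\lambda^*}$.

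\emph{Case 2:} the critical value is achieved by "local" loops, so the sup of $\beta$ must dominate directly. This gives $1\le 8C_r^2\vol\sqrt{\lambda^*}$.

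Factors such as $\sqrt2$ and $2\pi$ are absorbed into the constant $8$. Where I expect the argument could need adjustment is the precise bookkeeping in Case 2: showing that if no geodesic nearly realizes $c(L)$, then $\int_D\beta$ over small discs forces $\|\beta\|$ to be large.
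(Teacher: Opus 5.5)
Your proposal has a genuine gap. The pleated-surface estimate $|\int_\gamma\alpha|\le 4\pi\,\scl(\gamma)\,\|d\alpha\|_\infty\le 4\pi C_r\sqrt{\lambda^*}\,\scl(\gamma)$ is fine; it is equivalent to the paper's de Rham quasimorphism plus Bavard duality step. But it only bounds $\rho(M)$ once you know that some rationally null-homologous closed geodesic has $|\int_\gamma\alpha|/\ell(\gamma)$ bounded \emph{below}, by something like $\vol(M)^{-1/2}$. Nothing in your argument produces such a geodesic. The bound $c(L)\le \tfrac12\|\alpha\|_\infty^2$ points the wrong way. Testing Mañé's inequality on closed geodesics gives $|\int_\gamma\alpha|/\ell(\gamma)\le\sqrt{2c(L)}$, which is again an \emph{upper} bound on geodesic averages, so combined with the $\scl$ estimate it yields nothing. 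Your Case 1 (``some null-homologous geodesic witnesses $c(L)$'') assumes exactly what has to be proved, and Case 2 has no argument. Also, the volume does not enter through the normalization of $\|\alpha\|_\infty$.

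The paper supplies the missing ingredients as follows.

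(i) It proves a lower bound on the strict critical speed. Since $\alpha$ is coexact, it is $L^2$-orthogonal to every $du$, so $1=\|\alpha\|_2\le\|\alpha+du\|_2\le\sqrt{\vol(M)}\,\|\alpha+du\|_\infty$. This gives $s_0=\sqrt{2c_0}\ge\vol(M)^{-1/2}$, and this is where the volume enters.

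(ii) It uses weak KAM theory. A trajectory in the Aubry set is a speed-$s_0$ magnetic geodesic calibrated by a $C^1$ critical subsolution. Calibration forces $\frac{1}{s_0T}\int_0^T\alpha(\dot\gamma)\,dt\to s_0$, up to your sign convention.

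(iii) It compares magnetic geodesics with hyperbolic geodesics. Since $\|d\alpha\|_\infty\le C_r\sqrt{\lambda^*}$, every speed-$s_0$ magnetic geodesic has geodesic curvature at most $\kappa=C_r\sqrt{\lambda^*}/s_0$. So for $\kappa<1$ it is a quasigeodesic shadowing a hyperbolic geodesic. Stokes plus Gauss--Bonnet on a ruled quadrilateral then show the two time averages of $\alpha$ differ by at most $(\|\alpha\|_\infty+\|d\alpha\|_\infty)\kappa\le C_r(1+\sqrt{\lambda^*})\kappa$.

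Together these give $\fm_\infty(\alpha)\ge\vol(M)^{-1/2}-C_r(1+\sqrt{\lambda^*})\kappa$, and closing plus $b_1=0$ passes this to null-homologous closed geodesics. The dichotomy is then bookkeeping. If $\sqrt{\lambda^*}\le 1/(8C_r^2\vol(M))$, then $\kappa\le 1/(4C_r\sqrt{\vol(M)})$, the error is below $\tfrac12\vol(M)^{-1/2}$ because $\lambda^*<1$, and so $\rho(M)\le 8\pi C_r\sqrt{\vol(M)}\sqrt{\lambda^*}$. Otherwise the second alternative holds outright.
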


    We obtain explicit expressions for the constant $C_r$ above in dimension 3. We state the $r=1$ case as the following: 
  
 \begin{thm}\label{thm:intro3man}
         Let $M$ be a closed oriented hyperbolic $3$-manifold with $b_1(M)=0$ and $1< \inj(M).$ Let $\lambda^*$ be the smallest eigenvalue of the Hodge Laplacian acting on coexact 1-forms and assume $\lambda^*<1$. Then either $$\frac{\rho(M)}{9\pi \sqrt{\vol(M)}}\leq\sqrt{\lambda^*},$$ or else, $$\frac{1}{9\vol(M)}\leq \sqrt{\lambda^*}.$$
\end{thm}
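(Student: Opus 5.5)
Theorem \ref{thm:intro3man} is the $n=3$, $r=1$ specialization of Theorem \ref{thm:intro}, so the plan is to invoke Theorem \ref{thm:main} (equivalently Theorem \ref{thm:intro}) with $r=1$ and then bound the mean-value constant $C_1$ numerically using the explicit three-dimensional form of Theorem \ref{thm:appendix}. Concretely, I would show that $8C_1\le 9$ and $8C_1^2\le 9$. Since $C_r\ge 1$ is natural for a mean-value inequality of the form $|\omega(x)|\le C_r\|\omega\|_{L^2}/\sqrt{\vol}$-type normalizations (or at least $C_1^2\ge C_1$ whenever $C_1\ge1$), it suffices to prove $C_1^2\le 9/8$, i.e.\ $C_1\le \sqrt{9/8}\approx 1.0607$.

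If instead $C_1<1$, then $8C_1^2<8C_1$, and one needs only $C_1\le 9/8$. Either way, the task reduces to a numerical bound on $C_1$.

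For a coexact eigenform $\omega$ with $\Delta\omega=\lambda^*\omega$ on a hyperbolic 3-manifold, the mean-value inequality comes from a Weitzenböck/Bochner argument. I would plug $n=3$, curvature $-1$, $r=1$ and $\lambda^*<1$ into the formula of Theorem \ref{thm:appendix}. The pointwise quantity $|\omega|^2$ (or $|\omega|$) is then a subsolution of an explicit ODE-comparison problem on the ball of radius $1$ in $\H^3$. This ball embeds isometrically because $\inj(M)>1$. The resulting constant is expressed through the hyperbolic ball volume $\pi(\sinh 2-2)$ and a radial weight.

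The main obstacle is that this evaluation gives a transcendental expression in $\sinh$ and $\cosh$ of $1$. Because $\lambda^*<1$ enters the subsolution inequality, I would bound that term uniformly using the worst case $\lambda^*\to 1$ (or $\lambda^*=0$, whichever is monotonically worse). I would then estimate the result numerically, with rigorous rounding, to confirm it lies below the threshold.

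Finally, I would substitute into the two alternatives of Theorem \ref{thm:intro}. This gives $\rho(M)/(8\pi C_1\sqrt{\vol(M)})\ge \rho(M)/(9\pi\sqrt{\vol(M)})$ and $1/(8C_1^2\vol(M))\ge 1/(9\vol(M))$, which yields the stated dichotomy.
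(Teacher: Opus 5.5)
Your plan matches the paper: Theorem \ref{thm:intro3man} follows from Theorem \ref{thm:main} with $r=1$, using the explicit constant $C\le 1.04$ that the corollary to Theorem \ref{thm:appendix} gives for $R=\Lambda=1$. Since $8\cdot 1.04<9$ and $8\cdot 1.04^2<9$, this is exactly your reduction to $C_1\le\sqrt{9/8}$. One point is left implicit, in the paper too: the proof of Theorem \ref{thm:main} also needs the mean-value inequality for the $2$-form $d\omega$, and in dimension $3$ this follows by applying the $1$-form estimate to the coexact eigenform $\star d\omega$, which has the same eigenvalue and the same pointwise and $L^2$ norms.
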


The proof of Theorem \ref{thm:intro} identifies additional dynamical structure.
In particular,  the weak KAM theory produces distinguished magnetic geodesics at Mañé's critical energy level; these are the magnetic geodesics that comprise the Aubry set. These distinguished magnetic geodesics are analogous to stretch laminations associated to cohomology classes, as in \cite{DaskalopoulosUhlenbeck, FathiSiconolfi}\footnote{Understanding the structure of the Aubry set in explicit examples would be very interesting. Explicit examples of cohomology stretch laminations in dimensions 2 and 3 respectively are studied in  \cite{FarreLandesbergMinsky,RuddStretch}.}.  When the manifold is hyperbolic and the corresponding eigenvalue is sufficiently small, these magnetic geodesics closely shadow hyperbolic geodesics, and this "closeness" is in fact controlled by the eigenvalue of the eigenform\footnote{With the geodesic cohomology stretch laminations of \cite{DaskalopoulosUhlenbeck, FathiSiconolfi} in mind, one can view this as a manifestation of the idea that small eigenvalue coexact forms are "almost" harmonic, and thus almost are cocycles.}.

In this small eigenvalue regime, the proof relates periods over these shadowing geodesics to stable commutator length and the size of the eigenvalue using Bavard duality via the quasimorphism associated to an eigenform.

Our next result uses the Hamiltonian and Lagrangian descriptions of Mañé's strict critical value to prove a general lower bound on the coexact spectral gap. A consequence of this is a new, short, proof of the main theorem of \cite{BoulangerCourtois}. In their theorem, a mean-value-inequality constant appears, coming from a mean-value-inequality of Gallot from \cite{Gallot} that requires uniformly bounded diameter and curvature bounds. Using Gallot's estimate in Theorem \ref{thm:intro:area} below then recovers their theorem. We also obtain an explicit version of this estimate for hyperbolic 3-manifolds. 

 In Theorem \ref{thm:intro:area}, the isoperimetric constant we consider is the degree one Cheeger constant, defined by \cite{BoulangerCourtois},  $$h_1(M) = \inf_{\a}\frac{\ell(\alpha)}{\sarea(\alpha)},$$ where the infimum runs over all immersed rationally nullhomologous curves (including nullhomotopic ones), and $\sarea(\alpha)$ is the stable area, which measures the minimal (normalized) area of a surface bounding a multiple of $\alpha.$

 \begin{thm}[cf. Theorem \ref{thm:cheeger}]\label{thm:intro:area}
     Let $M$ be a closed oriented Riemannian $n$-manifold. Let $\lambda^*$ be the smallest eigenvalue of the Hodge Laplacian acting on coexact 1-forms. Assume the differential of a $\lambda^*$ eigenform  satisfies a mean-value-inequality with constant $C$. Then $$\frac{h_1(M)}{C\sqrt{\vol(M)}}\leq \sqrt{\lambda^*}.$$
 \end{thm}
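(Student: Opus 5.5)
Let $\omega$ be a coexact $1$-form with $\Delta\omega=\lambda^*\omega$, normalized so that $\norm{\omega}_{L^2}=1$, and set $\beta=d\omega$. Since $\omega$ is coexact, $d^*\omega=0$, so $\lambda^*\omega=d^*d\omega$ and
$$\norm{\beta}_{L^2}^2=\langle d^*d\omega,\omega\rangle=\lambda^*.$$
By the mean-value-inequality hypothesis, $\norm{\beta}_\infty\le C\norm{\beta}_{L^2}=C\sqrt{\lambda^*}$. The plan is to view $\beta$ as a magnetic field with primitive $\omega$ and to bound Mañé's strict critical value $c_0$ of the Lagrangian $L(x,v)=\tfrac12|v|^2+\omega_x(v)$ in two ways.

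\textbf{Upper bound on $c_0$ (Lagrangian/isoperimetric side).} Use the characterization
$$c_0=\inf\{k:\ \textstyle\int_\gamma (L+k)\ge 0 \text{ for all closed curves } \gamma \text{ with } [\gamma]=0\in H_1(M;\mathbb{R})\}.$$
Take a rationally nullhomologous curve $\alpha$. If $\Sigma$ bounds $m\alpha$, Stokes gives $|\int_\alpha\omega|=\tfrac1m|\int_\Sigma\beta|\le \norm{\beta}_\infty\,\area(\Sigma)/m$. Taking the infimum over $\Sigma$ and $m$ yields
$$\Big|\int_\alpha\omega\Big|\le C\sqrt{\lambda^*}\,\sarea(\alpha)\le C\sqrt{\lambda^*}\,\ell(\alpha)/h_1(M).$$
Reparametrize $\alpha$ at constant speed $s$ on a time interval of length $T=\ell/s$. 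The action is then
$$\int(L+k)\ge \frac{\ell s}{2}+\frac{k\ell}{s}-\frac{C\sqrt{\lambda^*}\,\ell}{h_1}.$$
Minimizing over $s$ gives $\ell\big(\sqrt{2k}-C\sqrt{\lambda^*}/h_1\big)$, which is nonnegative as soon as $k\ge \lambda^*C^2/(2h_1^2)$. Hence $c_0\le C^2\lambda^*/(2h_1^2)$.

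\textbf{Lower bound on $c_0$ (Hamiltonian side).} Use
$$c_0=\inf_{u}\sup_x \tfrac12|\omega+du|^2,$$
the infimum taken over smooth $u$, i.e. over closed forms cohomologous to $0$. Since $\omega+du$ differs from $\omega$ by an exact form and $\omega\perp du$ in $L^2$ by coexactness,
$$2c_0\,\vol(M)\ge \inf_u\norm{\omega+du}_{L^2}^2=\norm{\omega}_{L^2}^2=1.$$
So $c_0\ge 1/(2\vol(M))$. Combining the two bounds gives $1/\vol(M)\le C^2\lambda^*/h_1^2$, i.e. $h_1/(C\sqrt{\vol(M)})\le\sqrt{\lambda^*}$, as claimed.

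\textbf{Main obstacle.} The key step is justifying the two characterizations of $c_0$: the strict critical value as the Hamiltonian infimum over exact perturbations, and its Lagrangian description via nonnegativity of the action over nullhomologous closed curves. These are standard weak KAM facts (Paternain–Paternain, Contreras–Iturriaga), presumably stated earlier in the paper. The point requiring care is matching the classes of curves: $c_0$ is defined by curves that are zero in \emph{real} homology, which coincides with the rationally nullhomologous curves used in $h_1$, and I would confirm that including nullhomotopic curves causes no issue.
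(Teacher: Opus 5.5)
Your proposal is correct and is essentially the paper's proof: your Lagrangian upper bound on $c_0$ is the direct (contrapositive) form of Lemma \ref{lem:null-loop-id} combined with the same Stokes/stable-area estimate, and your Hamiltonian lower bound is Lemma \ref{lem:norm-comp}. One slip to fix: the Hamiltonian formula for the strict value $c_0$ takes the infimum over $\omega+du+h$ with $h\in\cH^1(M)$ as well, since over exact perturbations alone you get $c(\omega)\geq c_0(\omega)$, which is the wrong direction to combine with your upper bound when $b_1(M)>0$; your lower bound still holds with this correction because a coexact form is $L^2$-orthogonal to harmonic forms too.
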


For hyperbolic 3-manifolds, the constant $C$ can be made explicit. For instance, we have:
 
 \begin{thm}\label{thm:intro:area-hyp}
     Let $M$ be a closed oriented hyperbolic $3$-manifold with injectivity radius $1<\inj(M)$.Let $\lambda^*$ be the smallest eigenvalue of the Hodge Laplacian acting on coexact 1-forms and assume $\lambda^*<1$. Then $$\frac{h_1(M)}{1.04\sqrt{\vol(M)}}\leq \sqrt{\lambda^*}.$$
 \end{thm}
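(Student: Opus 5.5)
Theorem \ref{thm:intro:area-hyp} is the special case of Theorem \ref{thm:intro:area} with an explicit mean-value constant. The plan is to apply Theorem \ref{thm:intro:area} to a $\lambda^*$ eigenform $\omega$, normalized by $\|\omega\|_{L^2}=1$ or by $\|d\omega\|_{L^2}=1$, whichever the proof of Theorem \ref{thm:cheeger} uses. It then suffices to show that $d\omega$ satisfies a mean-value inequality with constant $C\le 1.04$: a pointwise bound
$$|d\omega|(x)\le C\,\|d\omega\|_{L^2(M)}$$
for every $x\in M$. Since $\inj(M)>1$, the unit ball $B(x,1)$ lifts isometrically to a ball in $\H^3$. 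So it is enough to bound $|d\omega(x)|$ by $C$ times the $L^2$ norm of $d\omega$ on $B(x,1)$, and then bound that by the global norm.

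The key input is the elliptic equation satisfied by $\beta=d\omega$, viewed in dimension $3$ as a $1$-form $*d\omega$. Because $\omega$ is coexact, $\Delta\omega=\lambda^*\omega$ forces $d^*d\omega=\lambda^*\omega$. After normalizing, $\eta=*d\omega$ satisfies $*d\eta=\pm\sqrt{\lambda^*}\,\eta$, so $\eta$ is a curl eigenform with eigenvalue $\mu=\pm\sqrt{\lambda^*}$, $|\mu|<1$. It is therefore coclosed and satisfies $\Delta\eta=\lambda^*\eta$.

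For the mean-value step I would use the appendix result, Theorem \ref{thm:appendix} (or Proposition \ref{prop:MV}), specialized to $n=3$ and $r=1$, with eigenvalue parameter $\lambda^*<1$. A Weitzenböck or Bochner computation on hyperbolic space gives $\Delta\eta=\nabla^*\nabla\eta+\mathrm{Ric}(\eta)=\nabla^*\nabla\eta-2\eta$. Hence
$$\tfrac12\Delta_{\mathrm{scalar}}|\eta|^2\ge -(2+\lambda^*)|\eta|^2,$$
with the sign convention of the positive Laplacian. Equivalently, $|\eta|$ is a subsolution of a Schrödinger-type inequality $\Delta|\eta|\le (2+\lambda^*)|\eta|\le 3|\eta|$ in the geometric-Laplacian convention.

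Comparing $|\eta|$ with the radial solution $\phi$ of $\Delta\phi=3\phi$ on the hyperbolic ball of radius $1$ then gives
$$|\eta(x)|^2\le \Big(\int_{B(0,1)}\phi^2\Big)^{-1}\int_{B(x,1)}|\eta|^2,$$
normalized so that $\phi(0)=1$; this is the standard weighted mean-value argument. In $\H^3$ the radial eigenfunctions are explicit, $\phi(r)=\sinh(kr)/(k\sinh r)$ or its trigonometric analogue, so $\int\phi^2$ can be computed in closed form. A numerical evaluation of that integral should give $C\le 1.04$. Plugging this into Theorem \ref{thm:intro:area} finishes the proof.

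The main obstacle is making the constant sharp enough to reach $1.04$. The Bochner sign convention must give the right subsolution, and the comparison must use the full $L^2$ mass on the ball rather than a cruder volume bound. If the direct subharmonic comparison loses too much, the fallback is to work with the vector-valued identity directly: average $\eta$ against the radial kernel using Green's identity, rather than passing to $|\eta|$. If even that is insufficient, the last resort is whatever sharper form of the constant Theorem \ref{thm:appendix} provides for $r=1$.
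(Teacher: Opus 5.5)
Your reduction is the paper's: Theorem~\ref{thm:intro:area-hyp} comes from inserting into Theorem~\ref{thm:cheeger} the constant $1.04$ of the final corollary of Section~\ref{sec:appendix} (Theorem~\ref{thm:appendix} with $R=\Lambda=1$). That constant is applied to the coexact form $\star d\omega$, which satisfies $\Delta_1(\star d\omega)=\lambda^*\star d\omega$ and $|\star d\omega|=|d\omega|$; the paper leaves this step implicit.

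Where you genuinely differ is the self-contained derivation of the mean-value constant; use the paper's convention $\Delta=\mathrm{div}\,\nabla$ throughout.
\begin{itemize}
\item The paper works with $u=|\omega|^2$, which only satisfies $\Delta u+6u\ge 0$. It applies the Li--Nguyen weighted inequality (Theorem~\ref{thm:H-MVT}), then Cauchy--Schwarz against $\|w\|_{L^2(B)}$ and $\|\omega\|_{L^4(B)}^2\le\|\omega\|_\infty\|\omega\|_2$, arriving at $C_{1,1}/V_1\approx 1.03$.
\item You pass instead to $v=|\eta|$, whose potential is $2+\lambda^*\le 3$ rather than $6$. You compare with the positive radial solution $\phi$ of $(\Delta+3)\phi=0$, $\phi(0)=1$. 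Since $\mathrm{div}(\phi^2\nabla(v/\phi))=\phi(\Delta v+3v)\ge0$, spherical means of $v/\phi$ are nondecreasing. This gives $v(x)\int_B\phi^2\le\int_B v\phi$, hence $\|\eta\|_\infty\le\bigl(2\pi\int_0^1\sin^2(\sqrt2\,r)\,dr\bigr)^{-1/2}\|\eta\|_2\approx0.60\,\|\eta\|_2$.
\end{itemize}
This constant is sharp for the scalar comparison (equality for $v=\phi$). So your route improves $1.04$ to about $0.60$ in the theorem, and none of your fallbacks is needed. The paper's route, by contrast, avoids Kato's inequality, and its formula holds for every radius $R$. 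Yours needs $R<\pi/\sqrt{1+\Lambda}$, which is harmless here.

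Three repairs are needed.
\begin{itemize}
\item The sign conventions in your two displayed inequalities are swapped. With $\Delta=\mathrm{div}\,\nabla$ the correct statement is $\Delta|\eta|+(2+\lambda^*)|\eta|\ge0$. The comparison function must then be the trigonometric one, $\phi(r)=\sin(\sqrt2\,r)/(\sqrt2\sinh r)$, which is positive on $[0,1]$ since $\sqrt2<\pi$. The hyperbolic choice $\sinh(2r)/(2\sinh r)=\cosh r$ solves $\Delta\phi=3\phi$ instead, and would produce the false constant $\approx0.33$; this is contradicted by $v=\sin(\sqrt2\,r)/(\sqrt2\sinh r)$ itself.
\item Passing from $|\eta|^2$ to $|\eta|$ is not an equivalence. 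You must keep the $|\nabla\eta|^2$ term of the Weitzenb\"ock identity and use Kato's inequality $|\nabla|\eta||\le|\nabla\eta|$, for instance applied to $\sqrt{|\eta|^2+\e^2}$ to handle zeros of $\eta$.
\item $\star d\omega$ is a curl eigenform only if $\omega$ is, since $\star d(\star d\omega)=\lambda^*\omega$. This is harmless, because only $\Delta_1\eta=\lambda^*\eta$ is used.
\end{itemize}
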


 \begin{remark}
         For some small volume examples, Lin and Lipnowski in \cite{LinLipnowski} (see their Table 2) explicitly computed bounds on the smallest coexact eigenvalue using the trace formula. For example:
             Let $M_{34}$ be the 34th manifold in the {\tt SnapPy} census, which Lin and Lipnowski study in detail. Then \begin{align*}
                 &\vol(M_{34}) < 1.91222, \\ &\inj(M_{34})> 0.24958\\
                 &\sqrt{\lambda^*}\in  (0.03619, 0.07329).
             \end{align*}  
             With this injectivity radius lower bound, one can take the mean-value-inequality constant to be $$C= 4.06.$$
             Consequently, we have a completely explicit upperbound on the degree one Cheeger constant for this hyperbolic 3-manifold: $$h_1(M_{34})\leq 0.41148.$$ 
 \end{remark}

The dynamical perspective also allows us to convert information from spectral geometry into information about the universal Mañé critical value and Mañé's strict critical value. For hyperbolic 3-manifolds, the curl operator enables a particular clean estimate relating the universal cover Mañé critical value to the coexact spectral gap.
 
\begin{prop}[cf. Proposition \ref{lem:mane-eigval-ineq}]  Let $M$ be a closed hyperbolic 3-manifold and let $\omega$ be a coexact curl eigenform with eigenvalue $\mu$ and $\norm{\omega}_\infty=1$. Then $$\sqrt{2\tilde c(\omega)}\leq|\mu|,$$ where $\tilde c(\omega)$ is the universal cover Mañé's critical value for the magnetic geodesic flow associated to $\omega.$
\end{prop}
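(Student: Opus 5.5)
The plan is to bound $\tilde c(\omega)$ from above by exhibiting an explicit primitive of the lifted magnetic field on $\H^3$ with small sup norm. I will use the characterization of the universal cover critical value as
$$\tilde c(\omega)=\inf_{\theta}\ \sup_{x\in\H^3}\tfrac12|\theta_x|^2 .$$
Here the infimum runs over all smooth $1$-forms $\theta$ on the universal cover $\H^3$ with $d\theta=\tilde\sigma$, and $\tilde\sigma$ is the lift of the magnetic field $\sigma=d\omega$. This is the Hamiltonian formulation, $\tilde c=\inf_{\theta}\sup_x H(x,\theta_x)$ with $H(x,p)=\tfrac12|p|^2$, in the normalization used in the paper. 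It therefore suffices to find a primitive $\theta$ of $\tilde\sigma$ with $\norm{\theta}_\infty\le|\mu|$.

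The first step is to record the pointwise size of the magnetic field. Since $\omega$ is a curl eigenform, $*d\omega=\mu\,\omega$, and the Hodge star is a pointwise isometry. Hence $|\sigma_x|=|d\omega_x|=|\mu|\,|\omega_x|\le|\mu|$ everywhere, using $\norm{\omega}_\infty=1$.

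The naive primitive $\tilde\omega$ itself only gives $\sqrt{2\tilde c}\le1$, which is weaker when $|\mu|<1$. The second step is therefore to construct a better primitive by coning off in hyperbolic space. Fix a basepoint $o\in\H^3$ and let $h_t$ be the geodesic contraction toward $o$. Define $\theta$ by the standard homotopy (Poincaré lemma) formula, so that
$$\theta_x(v)=\int_{\mathrm{cone}}\tilde\sigma ,$$
where the cone is the infinitesimal geodesic cone from $o$ over the vector $v$. Closedness of $\tilde\sigma$ gives $d\theta=\tilde\sigma$, and $\H^3$ is contractible, so this primitive is globally defined.

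The third step, which I expect to be the main computation, is the comparison estimate
$$|\theta_x(v)|\le\norm{\tilde\sigma}_\infty\cdot\text{(area of the cone over a unit vector at distance } r\text{)}.$$
For $v$ a unit vector at $x$ with $r=d(o,x)$, the component of $v$ along the radial direction contributes nothing. The orthogonal component $v^\perp$ sweeps out a geodesic triangle-like sliver whose infinitesimal area, in constant curvature $-1$, is
$$\int_0^r\frac{\sinh t}{\sinh r}\,|v^\perp|\,dt=\frac{\cosh r-1}{\sinh r}\,|v^\perp|=\tanh(r/2)\,|v^\perp|\le1 .$$
This yields $|\theta_x|\le|\mu|$ for all $x$, hence $\tilde c(\omega)\le\tfrac12\mu^2$, which is the claimed inequality $\sqrt{2\tilde c(\omega)}\le|\mu|$.

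Care is needed only in justifying the Jacobi field computation, namely that the variation field along the radial geodesic has norm $\sinh t/\sinh r$. I also need to check that the Hamiltonian definition of $\tilde c$ allows arbitrary primitives on the cover, not just lifts of primitives on $M$. That check is fine because taking the infimum over a larger class only gives a smaller value, and our $\theta$ is admissible in the universal cover definition.
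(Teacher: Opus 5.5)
Your argument is correct, and it takes a genuinely different route from the paper's.

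\textbf{The paper's route.} The paper argues by contradiction through the dynamics. If $|\mu|<\sqrt{2\tilde c(\omega)}$, then for every speed $s\in(|\mu|,\sqrt{2\tilde c(\omega)})$ all magnetic geodesics have geodesic curvature at most $|\mu|/s<1$. They are therefore quasigeodesics (Proposition \ref{prop:quasigeod}) and cannot close up in $\H^3$. On the other hand, Contreras' theorem (Theorem \ref{thm:subcritnull}) produces contractible periodic orbits for almost every such speed, a contradiction.

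\textbf{Your route.} You bound $\tilde c(\omega)$ directly from its Hamiltonian definition. Because $\H^3$ is simply connected, the primitives of $d\tilde\omega$ are exactly the forms $\tilde\omega+du$, so your $\theta$ is admissible. Coning along the geodesic contraction then gives $|\theta_x(v)|\le\norm{d\omega}_\infty\tanh(d(o,x)/2)\,|v^\perp|$. In effect you are using the linear isoperimetric inequality of $\H^3$: the cone over a unit vector has area less than $1$. Bounding the comass of $d\omega$ by its pointwise norm $|\mu||\omega_x|\le|\mu|$ is also fine.

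\textbf{What each buys.} Your argument is elementary and self-contained. It avoids Contreras' variational existence theorem, which only yields orbits for almost every subcritical speed, and it avoids Leininger's quasigeodesic lemma. It also exhibits an explicit, essentially optimal primitive. It makes transparent that the real content is $\sqrt{2\tilde c(\omega)}\le\norm{d\omega}_\infty$ for any $1$-form. By Rauch comparison this persists on closed manifolds of curvature $\le -1$; the curl equation and $\norm{\omega}_\infty=1$ enter only to give $\norm{d\omega}_\infty\le|\mu|$. The paper's argument, by contrast, reuses the magnetic-geodesic curvature bound it has already established. It explains the inequality dynamically: subcritical energies carry contractible orbits, while energies above $|\mu|$ force every orbit to shadow a geodesic. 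In the hyperbolic setting it too only uses $\norm{d\omega}_\infty$.
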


When the smallest eigenvalue $\mu$ satisfies  $|\mu| \ll 1/\vol(M),$ Mañé's universal cover critical value is much smaller than the strict critical value.
Using this, the examples of Theorem C in \cite{Rudd} produce bounded geometry hyperbolic manifolds $M_n$ and eigenforms $\omega_n$ such that the size of their eigenvalues, and thus the universal critical values $\tilde c(\omega_n)$, decay exponentially fast in the volume, while the strict $c_0(\omega_n)$ decays at most linearly in the volume.

\begin{prop}[cf. Proposition \ref{prop:examples}]\label{intro:Examples}
    There exists a sequence $M_n$ of closed hyperbolic 3-manifolds with $\inj(M_n)>r$ and volume $V_n=\vol(M_n)\to\infty$ along with curl eigenforms $\omega_n\in\Omega^1(M_n)$ normalized so that $||\omega_n||_\infty=1$ and  such that for constants $c,C>0$, the following holds $$\frac{\tilde c(\omega_n)}{c_0(\omega_n)}\leq CV_n^3e^{-2cV_n}.$$
\end{prop}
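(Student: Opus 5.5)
The plan is to combine three ingredients: the examples from Theorem C of \cite{Rudd}, the curl--Mañé inequality of Proposition \ref{lem:mane-eigval-ineq}, and a lower bound on the strict critical value $c_0(\omega)$ that decays only polynomially in the volume.

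\textbf{Step 1: the examples.} Theorem C of \cite{Rudd} supplies closed hyperbolic $3$-manifolds $M_n$ with $\inj(M_n)>r$ and $V_n\to\infty$. These are built by cutting and regluing along a separating surface, so that a coexact curl eigenform $\omega_n$ exists with eigenvalue $\mu_n$ satisfying $|\mu_n|\le C' e^{-cV_n}$ for constants $c,C'>0$. I normalize $\omega_n$ so that $\norm{\omega_n}_\infty=1$.

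\textbf{Step 2: upper bound on $\tilde c(\omega_n)$.} Proposition \ref{lem:mane-eigval-ineq} then gives directly
$$\tilde c(\omega_n)\le \tfrac12\mu_n^2\le \tfrac12 C'^2 e^{-2cV_n}.$$

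\textbf{Step 3: lower bound on $c_0(\omega_n)$.} This is the main obstacle. I would use the Hamiltonian description of the strict critical value:
$$c_0(\omega)=\inf_{f}\ \sup_x \tfrac12\,|\omega+df|^2(x).$$
Because $\omega$ is coexact, it is $L^2$-orthogonal to $df$ for every $f$. This gives
$$\norm{\omega}_2^2\le \langle \omega,\omega+df\rangle\le \norm{\omega}_2\,\norm{\omega+df}_2\le \norm{\omega}_2\,\sqrt{V}\,\norm{\omega+df}_\infty.$$
Taking the infimum over $f$ yields
$$\sqrt{2c_0(\omega)}\ \ge\ \frac{\norm{\omega}_2}{\sqrt V}.$$

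It remains to bound $\norm{\omega_n}_2$ from below, given $\norm{\omega_n}_\infty=1$. Here I apply the mean-value inequality (Proposition \ref{prop:MV}) to the eigenform $\omega_n$. It is also $\Delta$-eigen with $\mu_n^2<1$, and $\inj>r$ ensures a uniform constant $C_r$. At a point $x$ where $|\omega_n(x)|=1$ this gives
$$1\le C_r\norm{\omega_n}_{L^2(B_r(x))}\le C_r\norm{\omega_n}_2 .$$
Hence $c_0(\omega_n)\ge 1/(2C_r^2V_n)$.

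\textbf{Step 4: combine.} Steps 2 and 3 give
$$\frac{\tilde c(\omega_n)}{c_0(\omega_n)}\ \le\ C_r^2\,C'^2\,V_n\,e^{-2cV_n}.$$
This is even stronger than the stated $CV_n^3e^{-2cV_n}$. Any polynomial losses elsewhere, for instance if the eigenvalue bound in \cite{Rudd} carries polynomial prefactors in $V_n$, are absorbed into the allowed $V_n^3$.

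The delicate points are confirming that the construction in \cite{Rudd} produces curl eigenforms (rather than merely $\Delta$-eigenforms) with the exponential bound, and checking that the mean-value inequality applies to the normalized form at its maximum point.
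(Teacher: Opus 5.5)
Your proposal is correct and follows the paper's proof essentially verbatim. The paper likewise combines Rudd's eigenvalue bound with Proposition \ref{lem:mane-eigval-ineq} to bound $\tilde c(\omega_n)$ above, and uses Lemma \ref{lem:norm-comp} (which your Step 3 re-derives) together with the uniform mean-value inequality to get $c_0(\omega_n)\ge (2C^2V_n)^{-1}$. The only differences are minor: Rudd's bound as cited is $|\mu_n|\le CV_ne^{-cV_n}$, and that extra $V_n$ is exactly where the paper's $V_n^3$ comes from, so your $V_ne^{-2cV_n}$ assumes a slightly stronger eigenvalue bound than the one cited, though, as you note, the loss is absorbed; and in the definition of $c_0$ you should also minimize over harmonic forms, which a coexact $\omega$ is likewise $L^2$-orthogonal to, so your estimate is unaffected.
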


\section{Eigenforms and magnetic fields}\label{sec:mag-fields}
\subsection{Hodge and de Rham complex}
Let $(M,g)$ be a closed oriented Riemannian $n$-manifold. Associated to the Riemannian metric is the Hodge star $\star:\Omega^p(M)\to\Omega^{n-p}(M)$, which in turn defines an inner product via $$\langle \omega,\eta\rangle = \int_M\omega\wedge \star\eta = \int_M\langle \omega_x,\eta_x\rangle_x d\vol(x),$$ where $\langle\cdot,\cdot\rangle_x$ is the pointwise inner product on the cotangent bundle induced from the metric $g.$   This defines an $L^2$-norm on $\Omega^p(M)$: $\norm{\omega}_2^2 = \langle \omega,\omega\rangle.$ The pointwise norm $|\omega|_x = \langle \omega_x,\omega_x\rangle_x^{1/2}$ leads alternatively to a uniform norm $$\norm{\omega}_\infty = \sup_{x\in M} |\omega_x|_x.$$
The inner product also determines the Hodge Laplacian $$\Delta_p = (d+d^*)^2:\Omega^p(M)\to\Omega^p(M),$$ where $d^* =(-1)^{n(p+1)+1} \star d \star$  is the adjoint of $d.$ We drop the subscript $p$ in the Laplacian notation when it is clear.
The Hodge theorem decomposes $\Omega^p(M)$ as an orthogonal sum $$\Omega^p(M) =\cH^p(M)\oplus d\Omega^{p-1}(M)\oplus d^*\Omega^{p+1}(M).$$ The summand $\cH^p(M) = \ker(\Delta_p)$ consists of harmonic forms. The other summands are the exact forms $\omega \in d\Omega^{p-1}(M)$ and coexact forms $\omega\in d^* \Omega^{p+1}(M).$ A differential form $\omega$ such that $\Delta\omega=\lambda \omega$ is called an eigenform with eigenvalue $\lambda$. 

The summands in the Hodge decomposition of the de Rham complex further decompose as sums of eigenspaces of the Laplacian. The smallest eigenvalue of the Laplacian restricted to the space of coexact degree $p$-forms is called the first coexact eigenvalue in degree $p$. In the rest of this paper, we focus on the first coexact eigenvalue for the Laplacian acting on 1-forms. Thus when there will be no confusion, we refer to this eigenvalue as the first coexact eigenvalue, or as the coexact spectral gap, without reference to degree.

\begin{remark}
    Some of our results are particularly suited to dimension three, due to the curl operator $\star d.$ Coexact curl eigenforms are coexact 1-forms $\omega$ such that $$\star d \omega= \mu\omega$$ for some $\mu\in\R.$
The eigenvalues of the curl operator are related to the eigenvalues of the Hodge Laplacian acting on coexact 1-forms: the square of every curl eigenvalue is a Laplace eigenvalue, and for every coexact Laplace eigenvalue, a square root appears as a curl eigenvalue. Mostly, we will work with Laplace eigenforms to avoid handling two cases, but some estimates are cleaner in the curl case, and this is most relevant in Section \ref{sec:uni-crit-vel}.
\end{remark}

We say a $p$-form $\omega$ satisfies a $C$-mean-value-inequality if $$\norm{\omega}_\infty\leq C\norm{\omega}_2.$$
Eigenforms with bounded eigenvalue satisfy a uniform $C$-mean-value inequality, where the constants depend on the local geometry of the manifold. We highlight one such estimate from the literature. In Section \ref{sec:appendix}, and Theorem \ref{thm:appendix} in particular, we give another proof with explicit constants in dimension 3. 

\begin{prop}[Proposition 2.2 \cite{LipnowskiStern}]\label{prop:MV} Let $M$ be a closed hyperbolic $n$-manifold with injectivity radius $\inj(M)>r.$ Let $\omega$ be a degree $p$-eigenform of the Hodge Laplacian with eigenvalue at most $\L>0$. Then there is a constant $C(n,r,\L)$ such that $\omega$ satisfies a $C(n,r,\L)$-mean-value-inequality.
\end{prop}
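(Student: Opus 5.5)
The plan is the standard elliptic-regularity route: a local $L^\infty$ bound via harmonic coordinates and a Sobolev embedding, followed by a comparison of the local $L^2$ norm with the global one. Since $\inj(M)>r$, every ball $B(x,r)\subset M$ is isometric to a ball $B_r\subset\H^n$. The constants will therefore depend only on $n$, $r$ and $\L$, through the fixed geometry of $B_r$.

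Fix $x\in M$ and lift $\omega$ to $B_r\subset\H^n$ via the exponential map at $x$. On $B_r$, $\omega$ satisfies $(\Delta-\lambda)\omega=0$ with $0\le\lambda\le\L$. In normal (or harmonic) coordinates on the fixed hyperbolic ball, this is a second-order elliptic system with smooth coefficients. The coefficients are controlled uniformly because $B_r$ is a fixed model space, and the zeroth-order term is bounded by $\L$ plus curvature (Weitzenböck) terms.

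Apply interior elliptic estimates iteratively. One can use Gårding/Caccioppoli energy estimates on shrinking balls $B_{r}\supset B_{r/2}\supset\cdots$, or equivalently $H^k$ interior regularity, to get
$$\norm{\omega}_{H^k(B_{r/2})}\le C'(n,r,\L)\norm{\omega}_{L^2(B_r)}.$$
Choosing $k>n/2$, Sobolev embedding on the fixed ball $B_{r/2}$ gives $|\omega_x|\le C''\norm{\omega}_{H^k(B_{r/2})}$.

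Alternatively, one can use Moser iteration on $|\omega|^2$. By the Bochner formula on hyperbolic space, $\frac12\Delta_{\mathrm{rough}}|\omega|^2\ge -(\L+c_{n,p})|\omega|^2$ in the appropriate sign convention, so $|\omega|^2$ is a subsolution of a Schrödinger-type inequality. The mean-value inequality for subsolutions on $B_r\subset\H^n$ then gives
$$|\omega_x|^2\le C\,\frac{1}{\vol(B_r)}\int_{B_r}|\omega|^2.$$

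Finally, the exponential map $B_r\to B(x,r)\subset M$ is an isometric embedding, so $\int_{B_r}|\omega|^2\le\norm{\omega}_2^2$. Taking the supremum over $x$ gives $\norm{\omega}_\infty\le C(n,r,\L)\norm{\omega}_2$.

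The main obstacle is uniformity: the constants must depend only on $(n,r,\L)$, not on $M$ or $\omega$. This follows because all local estimates take place on the single model ball $B_r\subset\H^n$ with a spectral parameter in the compact range $[0,\L]$, so the elliptic constants can be chosen uniformly. The Bochner/Moser route makes this most transparent.
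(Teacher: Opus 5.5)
Your argument is correct, but it follows a different route from the paper's. The paper does not prove this proposition itself; it quotes it from Lipnowski--Stern. So the relevant comparison is with the explicit version proved in Section~4 (Theorem~\ref{thm:appendix}), which covers only $1$-forms on hyperbolic $3$-manifolds.

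Your Bochner route shares the paper's first step: the Weitzenb\"ock formula makes $u=|\omega|^2$ a nonnegative subsolution of $\Delta u+au\ge 0$ on an embedded hyperbolic ball. From there the two diverge. You invoke an abstract $L^1$ local maximum principle from Moser iteration, or, in your other route, interior elliptic estimates plus Sobolev embedding. The paper instead uses Li--Nguyen's weighted inequality $u(x)\le V_R^{-1}\int_B u\,w$ with an explicit radial weight $w\ge 1$.

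Because $w$ blows up like $d(x,y)^{2-n}$ at the center, it cannot simply be bounded by its supremum. The paper therefore takes $x$ to be a maximum point of $u$ and applies Cauchy--Schwarz with $\norm{w}_{L^2(B)}$, which is finite only for $n\le 3$. It then uses $\norm{u}_{L^2(B)}=\norm{\omega}_{L^4(B)}^2\le\norm{\omega}_\infty\norm{\omega}_{L^2(B)}$ and divides by $\norm{\omega}_\infty$. This avoids any iteration and gives computable constants (e.g.\ $1.04$ when $R=\Lambda=1$), which feed Theorems~\ref{thm:intro3man} and~\ref{thm:intro:area-hyp}.

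Your routes give only an unspecified $C(n,r,\Lambda)$. In exchange, they work in every dimension and every degree $p$ (with $c_{n,p}=p(n-p)$ on $\H^n$), which is what the proposition as stated requires.

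One minor remark: the $L^1$ form of the local maximum principle for $u$ is standard, but it is a genuine extension of Moser's basic $L^p$ estimate for $p>1$. If you prefer to use only the $L^2$ version, you can apply it either to $|\omega|$, which is a subsolution by Kato's inequality, or to $u$ at a maximum point combined with the paper's $L^4$--$L^\infty$--$L^2$ interpolation.
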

Note that if $\omega$ is a coexact eigenform in degree $p$ with eigenvalue $\mu^2,$ the $p+1$ form $d\omega$ is an exact eigenform with the same eigenvalue.

\subsection{Magnetic fields and Mañé's critical values}
Let $(M,g)$ be a closed oriented Riemannian $n$-manifold.
Given a 1-form $\omega\in\Omega^1(M),$ its differential is a 2-form $\Omega = d\omega$, which defines a magnetic field on $M$.
The corresponding Hamiltonian of a covector $p\in T_x^*M$ is given by $$H(x,p) = \frac{1}{2}|p+\omega_x|_x^2;$$ where we use the dual metric on the cotangent bundle.
The corresponding Lagrangian on the tangent bundle is given by $$L(x,v) = \frac{1}{2}g(v,v)_x - \omega_x(v).$$ The associated magnetic geodesic flow is the Euler-Lagrange flow of this Lagrangian, or the dual of the Hamiltonian flow of $H$ (using the canonical symplectic structure on $T^*M$).

Associated to the magnetic field $\Omega$ is a bundle endomorphism $Y$, called the Lorentz force, defined via the metric by the equation $$ \Omega(u,v) = g( Y(u),v).$$ The orbits of the magnetic flow are parametric integral curves $\gamma$ such that $$\nabla_{\dot\gamma}\dot\gamma = Y(\dot\gamma).$$ When $\Omega=0$ (i.e., when $\omega$ is closed), this is just the geodesic equation. Thus we can view the magnetic flow as describing a deformation of the usual geodesic flow; this is key to our analysis in the hyperbolic case. 

The speed of magnetic geodesics is constant along trajectories, but unlike regular geodesics, the solutions to the magnetic geodesic equation with different speeds are not simply reparameterizations; different curves appear as solutions with different speeds. 
The dynamics of magnetic geodesics depends on the energy level. Various phase transitions occur at critical energy levels. 

We first define critical values separately for the Hamiltonian and the Lagrangian; it turns out these are the same \cite{CIPP}. The Lagrangian definition requires the notion of action of a path.
Define the space of time $T$ parametrized paths from a point $x\in M$ to $y\in M$ to be $$\Pi_T(x,y) = \{\alpha:[0,T]\to M~:~ \alpha(0)=x,~\alpha(T)=y,~\alpha \in C^{Lip}([0,T],M)\}$$
and let $$\Pi(x,y) = \sqcup_{T>0}\Pi_T(x,y).$$  
Define also $$\Pi(M) = \sqcup_{x\in M} \Pi(x,x).$$
For $\alpha\in\Pi(x,y),$ let $T(\alpha)$ be the $T$ such that $\alpha\in \Pi_{T(\alpha)}(x,y).$
The action of a path $ \alpha\in\Pi(x,y)$ is given by $$A(\alpha)=\int_0^{T(\alpha)}L(\alpha(t),\dot\alpha(t))dt.$$

The Lagrangian Mañé critical value is given by $$c(L)=\inf\{k\in\R~:~ \forall \alpha\in \Pi(M),~ A(\alpha)+kT(\alpha)\geq0\},$$ 
and the Hamiltonian Mañé critical value is given by $$c(H) = \inf_{u:M\to\R} \frac{1}{2}||du+\omega||_\infty^2,$$ where the infimum is over smooth maps.

The function $\cS_k:\Pi(M)\to\R$ given by $\cS_k(\alpha) = A(\alpha)+kT(\alpha)$ is called the free-period action functional; see \cite{Abbondandolo}, and Section 4 in particular, for a focused reference. 

The two notions of critical value turn out to agree.

\begin{thm}[Theorem A \cite{CIPP}]\label{thm:CIPP} Let $M$ be an orientable Riemannian $n$-manifold covering a closed orientable Riemannian $n$-manifold $M_0$. Let $\omega\in\Omega^1(M_0)$ be a 1-form; consider the pullback of $\omega$ to $M$ and the corresponding Hamiltonian $H$ and Lagrangian $L$ for $M$. Then $$c(L)=c(H).$$ 
\end{thm}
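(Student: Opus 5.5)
The statement is Theorem~\ref{thm:CIPP} ($c(L)=c(H)$). I would prove it by establishing the two inequalities $c(L)\le c(H)$ and $c(H)\le c(L)$ separately. The first is elementary; the second needs a construction of a critical subsolution.

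\textbf{Step 1: $c(L)\le c(H)$.} Fix $k>c(H)$ and choose a smooth $u$ with $\frac12\norm{du+\omega}_\infty^2<k$. Since $u$ is pulled back from $M_0$ (or is simply bounded on the relevant closed loops), the loop integral of $du$ vanishes on any closed path $\alpha\in\Pi(M)$. Hence
\begin{align*}
A(\alpha)+kT(\alpha)=\int_0^{T}\Big(\tfrac12|\dot\alpha|^2-(\omega+du)(\dot\alpha)+k\Big)\,dt .
\end{align*}
By Fenchel's inequality, $(\omega+du)(v)\le \tfrac12|v|^2+\tfrac12|\omega+du|^2$. So the integrand is at least $k-\tfrac12|\omega+du|^2>0$, which gives $A(\alpha)+kT(\alpha)\ge 0$. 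Letting $k\downarrow c(H)$ yields $c(L)\le c(H)$.

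\textbf{Step 2: $c(H)\le c(L)$.} Fix $k\ge c(L)$ and define the Mañé potential
\begin{align*}
\Phi_k(x,y)=\inf_{\alpha\in\Pi(x,y)} \big(A(\alpha)+kT(\alpha)\big).
\end{align*}
Nonnegativity of the action on loops shows $\Phi_k>-\infty$, and $\Phi_k$ satisfies the triangle inequality. Comparing with short paths of suitable time shows $\Phi_k(x,\cdot)$ is Lipschitz with a constant uniform in $x$, because $\omega$ is bounded (it is pulled back from the compact $M_0$). Set $u=\Phi_k(x_0,\cdot)$. The triangle inequality gives $u(y)-u(z)\le \Phi_k(z,y)$ for all $y,z$. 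Taking straight segments of duration $s$ and speed $|v|$ and optimizing in $s$ shows that, at points of differentiability,
\begin{align*}
du(v)\le \inf_{s>0}\Big(\tfrac{s}{2}|v/s|^2-\omega(v)+ks\Big)=\sqrt{2k}\,|v|-\omega(v).
\end{align*}
Hence $|du+\omega|\le\sqrt{2k}$ almost everywhere. So $u$ is a Lipschitz subsolution of $H\le k$.

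\textbf{Step 3: smoothing (main obstacle).} The definition of $c(H)$ requires smooth $u$, so the Lipschitz subsolution must be regularized. On a compact $M_0$ I would mollify in charts with a partition of unity. Because $H$ is convex in $p$, Jensen's inequality shows the mollified function satisfies $H\le k+\varepsilon$, with error controlled by the mollification scale and the modulus of continuity of $\omega$ and $g$.

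On a noncompact cover $M$ the same works, provided the mollification is done with uniformly bounded geometry. The cover inherits bounded geometry from $M_0$, so one can use a uniformly locally finite cover by uniform charts, and the Jensen error stays uniform. I expect the care here to lie in keeping all error terms uniform on the cover.

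Combining, $c(H)\le k+\varepsilon$ for every $k\ge c(L)$ and $\varepsilon>0$, so $c(H)\le c(L)$. Together with Step 1 this gives $c(L)=c(H)$.
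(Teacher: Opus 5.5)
The paper does not prove this statement itself; it quotes Theorem~A of \cite{CIPP}. Your argument is correct and is essentially the standard proof from that reference: Fenchel's inequality gives $c(L)\le c(H)$, and for $c(H)\le c(L)$ you use Mañé's action potential $\Phi_k(x_0,\cdot)$ as a Lipschitz subsolution, then smooth it by convolution with error bounds made uniform by the bounded geometry the cover inherits from $M_0$.

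Two small fixes. In Step~1, $u$ need not be pulled back from $M_0$ or bounded, since $\int_\alpha du=u(\alpha(T))-u(\alpha(0))=0$ for every closed path $\alpha$ in $M$. In Step~3, also account for the partition-of-unity cross term $\sum_i (u_i-u)\,d\phi_i$, which is $O(\delta\,\mathrm{Lip}(u))$ uniformly on $M$ when the charts and cutoffs are lifted from $M_0$.
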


With this equality in mind, we will write $c(\omega)$ to denote the critical value above for the system on $M$ defined by the 1-form $\omega$. 

A slight modification of the definitions above leads to the Mañé strict critical values.
From the Hamiltonian perspective, this amounts to allowing one to also minimize over harmonic forms when defining the critical value:
$$c_0(H) = \inf_{\substack{u:M\to\R\\ h\in\cH^1(M)}}\frac{1}{2}||du+h+\omega||_\infty^2.$$

For the Lagrangian definition, consider the map $\Pi(M)\to H_1(M;\R)$, where one views elements of $\Pi(M)$ as Lipschitz singular 1-cycles. Let $\Pi_0(M)$ consist of all loops in $\Pi(M)$ which map to trivial 1-cycles in $H_1(M;\R).$ Then, the Lagrangian definition of the strict Mañé critical value is $$c_0(L) = \inf\{k\in\R~:~\forall \alpha\in \Pi_0(M),~A(\alpha)+kT(\alpha)\geq0\}.$$

As in the case of $c(\omega),$ the two definitions of strict critical value agree; this follows from results in \cite{CIPP} and \cite{PP}.  We therefore write  $c_0(\omega):=c_0(H)=c_0(L)$ for the strict critical values.

 \begin{thm}[Corollary 1 and subsequent remark \cite{CIPP}] 
     Let $(M,g)$ be a closed oriented Riemannian $n$-manifold and let $\omega\in \Omega^1(M)$ be a 1-form. Then $c_0(H)=c_0(L).$
 \end{thm}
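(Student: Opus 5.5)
Since the paper cites this as Corollary 1 of \cite{CIPP}, the plan is to reduce it to Theorem \ref{thm:CIPP} applied to a suitable covering, namely the abelian cover $\bar M\to M$ with deck group $H_1(M;\R)$-image of $H_1(M;\mathbb Z)$ modulo torsion. The cover is chosen so that a closed loop in $M$ lifts to a closed loop in $\bar M$ exactly when it is trivial in $H_1(M;\R)$; this is the point to check first, using that the cover corresponds to the kernel of $\pi_1(M)\to H_1(M;\mathbb Z)/\text{torsion}$.

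On the Lagrangian side I would show $c(\bar L)=c_0(L)$. Loops in $\Pi(\bar M)$ project to loops in $\Pi_0(M)$ with the same action and period, because $L$ is pulled back. Conversely, every loop in $\Pi_0(M)$ lifts to a closed loop in $\bar M$. The two infima defining $c(\bar L)$ and $c_0(L)$ therefore run over the same set of (action, period) pairs. A small point to handle is that $\Pi_0$ is defined through real homology rather than integral homology, which is exactly why torsion is quotiented out in the choice of cover.

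On the Hamiltonian side I would show $c(\bar H)=c_0(H)$.
\begin{itemize}
\item For $c(\bar H)\le c_0(H)$: given $u$ and $h$, write $h=\sum a_i h_i$ in an integral harmonic basis. The pullback of $h$ to $\bar M$ is exact, equal to $df$ for a function $f$ that is not periodic. Then $\bar u=u\circ\pi+f$ gives the same sup norm, so this inequality holds provided the infimum for $c(\bar H)$ may range over all smooth $\bar u$, not only bounded or periodic ones. That is the convention used in \cite{CIPP}, where $M$ in Theorem \ref{thm:CIPP} is an arbitrary cover.
\item For $c(\bar H)\ge c_0(H)$: take $\bar u$ on $\bar M$ with $\|d\bar u+\bar\omega\|_\infty^2/2<k$. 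Average over the deck group $\mathbb Z^{b}$ using a Følner sequence of balls, i.e.\ form averages of translates of $\bar u$ minus linear functions. Convexity of the sup norm preserves the bound. In the limit one obtains $du+h+\omega$ on $M$ with $h$ harmonic (or at least closed) and the same bound.
\end{itemize}

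The main obstacle is this averaging step. It needs a compactness argument, which I would do in Lipschitz functions, followed by mollification. It also needs an identification of the linear growth rate of $\bar u$ along the deck group with a cohomology class $h$. To avoid the averaging, I would instead use a shortcut: apply Theorem \ref{thm:CIPP} directly to $\bar M$, and combine it with the Lagrangian identification above. The Hamiltonian identification then only needs the easy inequality together with $c(\bar L)=c(\bar H)$, plus a direct argument that $c_0(H)\le c_0(L)$. For that direct argument I would follow \cite{PP}: use Fathi–Siconolfi-type subsolutions of the Hamilton–Jacobi equation at level $c_0(L)$ on $\bar M$, whose differentials are equivariant up to closed forms and hence descend to $M$. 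This yields $c_0(H)\le c_0(L)$, and equality follows.
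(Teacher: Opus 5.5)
The paper gives no argument of its own here; it only cites \cite{CIPP} and \cite{PP}. Your frame matches what those results do. The identification $c_0(L)=c(\bar L)$ on the cover with deck group $\Gamma=H_1(M;\mathbb Z)/\mathrm{torsion}$ is correct, Theorem \ref{thm:CIPP} gives $c(\bar L)=c(\bar H)$, and your inequality $c(\bar H)\le c_0(H)$ is correct. But together these only yield $c_0(L)\le c_0(H)$. That is the elementary direction anyway: for $\alpha\in\Pi_0(M)$ one has $\int_\alpha(du+h)=0$ and $\frac12|v|^2-(\omega+du+h)(v)\ge-\frac12\norm{\omega+du+h}_\infty^2$.

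Everything rests on $c_0(H)\le c_0(L)$, and the shortcut you say you would use for it has a genuine gap. Any $\bar u$ on $\bar M$ satisfies $\tau^*d\bar u-d\bar u=d(\bar u\circ\tau-\bar u)$, so ``equivariant up to closed forms'' holds for every function and is vacuous. Descent to $M$ needs $\tau^*d\bar u=d\bar u$, i.e.\ $\bar u\circ\tau-\bar u$ constant for every $\tau\in\Gamma$. Subsolutions on the noncompact cover have no such property in general; this includes the Mañé potential $y\mapsto\Phi_k(x_0,y)$ and any Fathi--Siconolfi-type regularization of it. Moreover, the existence of such an equivariant subsolution at level $k$ is essentially equivalent to $c_0(H)\le k$, so the argument is circular. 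If instead you apply Fathi--Siconolfi on $M$ itself to $L-h$, you get $c_0(H)\le\min_{[h]}c(L-h)$. You would then still need $\min_{[h]}c(L-h)\le c(\bar L)$, which is exactly the nontrivial input the paper imports from \cite{CIPP,PP}.

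The averaging you set aside is what actually closes the gap, and it is routine because $\Gamma\cong\mathbb Z^b$ is amenable. Suppose $\norm{d\bar u+\bar\omega}_\infty\le s$. Since $\bar\omega$ is $\Gamma$-invariant, every $\tau^*d\bar u$ obeys the same bound. By convexity, so do the box averages $\xi_N=|F_N|^{-1}\sum_{\tau\in F_N}\tau^*d\bar u=dv_N$. Moreover $\norm{\sigma^*\xi_N-\xi_N}_\infty\le|\sigma F_N\triangle F_N|\,|F_N|^{-1}(s+\norm{\omega}_\infty)\to0$.

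Normalize $v_N(x_0)=0$ and pass to a locally uniform limit $v$, which is possible because the $v_N$ are uniformly Lipschitz. Then:
\begin{enumerate}
\item each $v\circ\sigma-v$ is a constant $\phi(\sigma)$, and the homomorphism $\phi:\Gamma\to\R$ is your class $[h]$;
\item the a.e.\ bound $|dv+\bar\omega|\le s$ survives, since subsolutions of a convex Hamiltonian are stable under uniform limits;
\item $v$ minus a primitive of $\pi^*h$ is $\Gamma$-invariant, so $dv=\pi^*(h+du)$ with $u$ Lipschitz on $M$.
\end{enumerate}
The standard smoothing lemma for Lipschitz subsolutions of convex Hamiltonians on a closed manifold then gives a smooth $u$ at cost $\e$. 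Hence $c_0(H)\le c(\bar H)=c_0(L)$. If you work with differentials throughout, no ``linear functions'' need to be subtracted.
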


The following consequence of the Hamiltonian characterization of the strict critical value will be useful. 
\begin{lem}\label{lem:norm-comp} Let $(M,g)$ be a closed oriented Riemannian $n$-manifold and
    consider $ \sqrt{2 c_0(\omega)}$ the critical speed for the magnetic flow defined by a coexact 1-form $\omega$. Then $$
||\omega||_2\leq 
\sqrt{\vol(M)} \sqrt{2 c_0(\omega)}.$$
 \end{lem}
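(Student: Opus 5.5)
The plan is to use the Hamiltonian characterization of the strict critical value together with the $L^2$-orthogonality in the Hodge decomposition. By definition,
$$c_0(\omega)=\inf_{u,h}\tfrac12\norm{du+h+\omega}_\infty^2 .$$
So for any $\varepsilon>0$ we can choose a smooth function $u:M\to\R$ and a harmonic 1-form $h\in\cH^1(M)$ such that
$$\norm{du+h+\omega}_\infty\leq \sqrt{2c_0(\omega)}+\varepsilon .$$

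The first step is to compare the $L^2$ norm of $\omega$ with that of $\eta:=du+h+\omega$. Since $\omega$ is coexact, the Hodge decomposition makes it $L^2$-orthogonal to both the exact form $du$ and the harmonic form $h$. Therefore
$$\norm{\eta}_2^2=\norm{\omega}_2^2+\norm{du+h}_2^2\geq \norm{\omega}_2^2 .$$

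The second step bounds the $L^2$ norm by the uniform norm:
$$\norm{\eta}_2^2=\int_M|\eta_x|_x^2\,d\vol(x)\leq \vol(M)\,\norm{\eta}_\infty^2 .$$

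Combining the two steps gives
$$\norm{\omega}_2\leq\sqrt{\vol(M)}\left(\sqrt{2c_0(\omega)}+\varepsilon\right),$$
and letting $\varepsilon\to0$ yields the lemma.

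There is no real obstacle. The only point needing care is that the infimum in $c_0(H)$ may not be attained, and the $\varepsilon$-argument above handles this. One should also note that the orthogonality uses coexactness of $\omega$ in an essential way: in general only the coexact component of $\omega$ is controlled.
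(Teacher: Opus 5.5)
Your proof is correct and is essentially the paper's argument: take near-minimizing $u$ and $h$ in the Hamiltonian definition of $c_0(\omega)$, use Hodge orthogonality (coexact $\omega$ is $L^2$-orthogonal to $du+h$) to get $\norm{\omega}_2\leq\norm{du+h+\omega}_2$, bound this by $\sqrt{\vol(M)}\,\norm{du+h+\omega}_\infty$, and pass to the limit. Your $\varepsilon$-formulation matches the paper's use of a minimizing sequence $(u_i,h_i)$.
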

 \begin{proof}
 Take $u_i$ and $h_i$ a sequence of functions and harmonic 1-forms respectively realizing the minimum defining the strict critical value. The Hodge decomposition implies $\norm{\omega}_2\leq ||h_i+du_i+\omega ||_2,$ and then an $L^\infty$-$L^2$-norm comparison implies $\norm{\omega}_2\leq \sqrt{\vol(M)} ||h_i+du_i+\omega||_\infty$. Taking $i\to\infty,$ the right-hand-side tends to $\sqrt{\vol(M)} \sqrt{2 c_0(\omega)}$, as desired. 
 \end{proof}

The following is related to Theorem A in \cite{BurnsPaternain} (and is in fact an equality).

 \begin{lem}\label{lem:null-loop-id}
 Let $M$ be a closed oriented Riemannian $n$-manifold. Let $\omega$ be a 1-form. 
     There is an inequality $$\sqrt{2c_0(\omega)} \leq \sup\limits_{\substack{\g:S^1\to M\\ [\g]=0\in H_1(M;\R)}} \frac{1}{\ell(\gamma)}\int_\gamma\omega.$$
 \end{lem}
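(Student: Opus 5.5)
Set $s:=\sup_{[\g]=0}\frac{1}{\ell(\g)}\int_\g\omega$, where the supremum runs over closed loops $\g$ that are trivial in $H_1(M;\R)$. The plan is to use the Lagrangian characterization $c_0(\omega)=c_0(L)$ and prove that every $k>s^2/2$ satisfies the defining condition of $c_0(L)$. That is, for such $k$ we will show $A(\alpha)+kT(\alpha)\geq 0$ for every $\alpha\in\Pi_0(M)$. This gives $c_0(L)\leq s^2/2$, hence $\sqrt{2c_0(\omega)}\leq s$.

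Fix a loop $\alpha\in\Pi_0(M)$ with period $T=T(\alpha)$ and length $\ell=\ell(\alpha)$. By the definition of $s$ we have $\int_\alpha\omega\leq s\,\ell$. For this we view $\alpha$ as a closed Lipschitz curve $S^1\to M$; if needed, we approximate it by smooth loops, or note that the supremum may be taken over Lipschitz loops, since $\int\omega$ and length both behave continuously under such approximation. Cauchy--Schwarz gives $\ell^2\leq T\int_0^T|\dot\alpha|^2\,dt$. Therefore
\[
A(\alpha)+kT \;\geq\; \frac{\ell^2}{2T}-s\ell+kT \;\geq\; \ell\left(\sqrt{2k}-s\right)\;\geq\;0,
\]
where the middle step is AM--GM: $\frac{\ell^2}{2T}+kT\geq \ell\sqrt{2k}$. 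The last inequality uses $k\geq s^2/2$ (with $s\geq 0$).

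We should check that $s\geq 0$. This follows by reversing the orientation of a loop, which negates $\int_\gamma\omega$, or by taking constant loops. If $s=\infty$ there is nothing to prove, so we may assume $s<\infty$. Letting $k\downarrow s^2/2$ then yields $c_0(L)\leq s^2/2$.

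The only delicate point is the technical matching between the two classes of loops. The supremum is over loops $\g:S^1\to M$, while $\Pi_0(M)$ consists of Lipschitz loops with arbitrary period whose classes vanish in $H_1(M;\R)$. Reparametrizing a loop changes neither its length nor $\int\omega$, so the two notions of ``rationally nullhomologous loop'' coincide. The main obstacle is therefore only to allow Lipschitz (or piecewise smooth) $\g$ in the supremum, which we handle by density. Everything else is the elementary optimization in $T$ shown above.
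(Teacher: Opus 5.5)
Your proof is correct and is the same computation as the paper's (Cauchy--Schwarz, i.e.\ passing to constant speed, followed by optimizing the period $T$ via AM--GM), only run in the contrapositive direction: the paper fixes $k<c_0(\omega)$, extracts a loop in $\Pi_0(M)$ with $\cS_k<0$, and deduces that its ratio exceeds $\sqrt{2k}$, whereas you show directly that every $k\geq s^2/2$ makes $\cS_k\geq 0$ on $\Pi_0(M)$, which spares you the limiting sequence $k_i\to c_0(\omega)$ and the separate case $c_0(\omega)=0$. One small correction: $s\geq 0$ follows from the orientation-reversal argument, not from constant loops, since the ratio $\frac{1}{\ell(\g)}\int_\g\omega$ is undefined when $\ell(\g)=0$.
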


 \begin{proof}
 First note that if $c_0(\omega)=0,$ this is trivial, so we assume $c_0(\omega)>0.$
From the definition of the critical value using the free period action, we have that for all $k< c_0(\omega)$, there exists a nullhomologous loop $\g\in\Pi_0(M)$ with period $T=T(\g)$ such that the free period action satisfies $\cS_k(\g)<0.$ Because the constant speed parametrization will minimize the action, we can assume $\g$ is parametrized to have constant speed $s=\ell(\g)/T$. We compute

\begin{align*}
0> \cS_k(\g) &= \int_0^{T}\frac{1}{2}|\dot\g(t)|^2-\omega(\dot\g(t))dt + kT
=\frac{\ell(\g)^2}{2T}-\int_{\g}\omega + kT.
\end{align*}

We can further assume that $T = \ell(\g)/ \sqrt{2k}$, as this minimizes $\ell(\g)^2/(2T)+kT$. In particular, because $k<c_0(\omega),$ the above expression must be negative for this value of $T$.
Thus $$0> \cS_k(\g) = \frac{\ell(\gamma)}{2(2k)^{-1/2}} - \int_\g \omega + k\frac{\ell(\g)}{\sqrt{2k}}  .$$
Rearranging and dividing by $\ell(\g)$, we find $$\frac{1}{\ell(\g)}\int_\g\omega > \frac{\sqrt{2k}}{2}+\frac{\sqrt{k}}{\sqrt{2}} = \sqrt{2k}.$$
Now, take a sequence $k_i\to c_0(\omega)$ with $k_i<c_0(\omega)$ and find nullhomologous loops $\g_i$ as above for each such $k_i$. Then this estimate implies $$\lim\limits_{i\to\infty}\frac{1}{\ell(\g_i)}\int_{\g_i}\omega\geq \sqrt{2c_0(\omega)},$$ and the claim follows.
\end{proof}

Define the stable area of a rationally nullhomologous loop $\g$ to be $$\sarea(\g)= \inf\left\{\frac{\area(S_n)}{n}~:~S_n\to M,~\d S_n = \g^n,~n>0\right\}.$$

Recall that the degree one Cheeger constant is defined to be $$h_1(M) = \inf_{\a}\frac{\ell(\alpha)}{\sarea(\alpha)},$$ where the infimum runs over all immersed rationally nullhomologous curves (including nullhomotopic ones).

We can now prove the analogue of Cheeger's inequality, refining the main theorem in \cite{BoulangerCourtois} and the lower bound for 1-forms in Theorem 3.4 in \cite{Zung}.

 \begin{thm}[cf. Theorem \ref{thm:intro:area}]\label{thm:cheeger}
     Let $M$ be a closed oriented Riemannian $n$-manifold and let $\omega$ be a coexact eigenform with eigenvalue $\mu^2$ of the Laplacian, normalized so that $\norm{\omega}_2=1$. Assume $d\omega$ satisfies a $C$-mean-value-inequality. Then $$h_1(M)\leq C\sqrt{\vol(M)}\mu.$$
 \end{thm}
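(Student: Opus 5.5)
The plan is to sandwich $\sqrt{2c_0(\omega)}$ between Lemma \ref{lem:norm-comp} and Lemma \ref{lem:null-loop-id}, converting the upper bound into a statement about periods of $\omega$ over nullhomologous loops and then into stable area via Stokes' theorem.

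\textbf{Step 1 (lower bound).} By Lemma \ref{lem:norm-comp} and the normalization $\norm{\omega}_2=1$,
$$\frac{1}{\sqrt{\vol(M)}}\leq \sqrt{2c_0(\omega)}.$$

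\textbf{Step 2 (upper bound).} By Lemma \ref{lem:null-loop-id}, for every $\e>0$ there is a loop $\g$ with $[\g]=0\in H_1(M;\R)$ and
$$\frac{1}{\sqrt{\vol(M)}}-\e\leq \frac{1}{\ell(\g)}\int_\g\omega.$$
Here I use Lemma \ref{lem:null-loop-id} only as stated; its proof produces Lipschitz loops. A small perturbation, keeping the loop rationally nullhomologous, makes $\g$ immersed, so that it is admissible in the definition of $h_1(M)$. This changes both $\ell(\g)$ and $\int_\g\omega$ by an arbitrarily small relative amount.

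\textbf{Step 3 (Stokes estimate).} Let $S_n\to M$ be a surface with $\d S_n=\g^n$. By Stokes' theorem,
$$n\int_\g\omega=\int_{S_n}d\omega\leq \norm{d\omega}_\infty\area(S_n).$$
Dividing by $n$ and taking the infimum over all such $S_n$ gives
$$\int_\g\omega\leq \norm{d\omega}_\infty\,\sarea(\g).$$

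\textbf{Step 4 (bounding $\norm{d\omega}_\infty$).} The mean-value inequality gives $\norm{d\omega}_\infty\leq C\norm{d\omega}_2$. Since $d^*\omega=0$,
$$\norm{d\omega}_2^2=\langle d^*d\omega,\omega\rangle=\langle\Delta\omega,\omega\rangle=\mu^2\norm{\omega}_2^2=\mu^2.$$
Hence $\norm{d\omega}_\infty\leq C\mu$.

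\textbf{Step 5 (conclusion).} Combining the previous steps,
$$\frac{1}{\sqrt{\vol(M)}}-\e\leq \frac{C\mu\,\sarea(\g)}{\ell(\g)}\leq \frac{C\mu}{h_1(M)}.$$
Letting $\e\to0$ and rearranging gives $h_1(M)\leq C\sqrt{\vol(M)}\,\mu$.

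\textbf{Main obstacle.} The delicate points are bookkeeping rather than conceptual. First, the loops from Lemma \ref{lem:null-loop-id} must be admissible competitors for $h_1(M)$, i.e. immersed and rationally nullhomologous; this is handled by the perturbation in Step 2, or by approximation if $h_1$ tolerates it. Second, $\sarea(\g)$ must be finite, which holds because real-nullhomologous integral loops are rationally nullhomologous, so some multiple $\g^n$ bounds a surface.
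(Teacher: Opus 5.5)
Your proposal is correct and follows the paper's proof essentially step for step. Lemma~\ref{lem:norm-comp} gives $\sqrt{2c_0(\omega)}\geq \vol(M)^{-1/2}$, Lemma~\ref{lem:null-loop-id} turns this into a nullhomologous loop with large normalized period, and Stokes' theorem together with the mean-value inequality bounds that period by $C\mu\,\sarea(\g)$. Your explicit computation $\norm{d\omega}_2=\mu$, and your perturbation making the loop immersed (so that it is an admissible competitor for $h_1(M)$), are details the paper leaves implicit, and you handle both correctly.
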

 \begin{proof}
      Stokes' theorem and the definition of stable area imply that for any rationally nullhomologous loop $\g $ and surface $S_n$ with $\d S_n=\g^n$,  we have $$\frac{1}{\ell(\g)}\int_\g\omega =  \frac{1}{n\ell(\g)}\int_{\g^n}\omega= \frac{1}{n\ell(\g)}\int_{S_n} d\omega \leq \frac{||d\omega||_\infty \area(S_n)}{n\ell(\gamma)}.$$
      Taking a sequence of surfaces $S_n$ with boundary $\gamma^n$ such that $\area(S_n)/n\to \sarea(\g),$ we find  
      
      $$\frac{1}{\ell(\g)}\int_\g\omega  \leq  \sup_{\substack{\g :S^1\to M \\ [\g]=0}}\frac{\sarea(\g)}{\ell(\g)}||d\omega||_\infty,$$ where the supremum runs over all nullhomologous loops in $M.$ Using the mean value inequality, our normalization of $\omega,$ and Lemma \ref{lem:null-loop-id}, we find $$\sqrt{2c_0(\omega)}\leq  \sup\limits_{\substack{\g:S^1\to M\\ [\g]=0\in H_1(M;\R)}} \frac{1}{\ell(\gamma)}\int_\gamma\omega\leq \sup_{\substack{\g :S^1\to M \\ [\g]=0}} \frac{\sarea(\g)}{\ell(\g)}C\mu = h_1(M)^{-1}C\mu.$$  Using Lemma \ref{lem:norm-comp}, we have $$||\omega||_2\leq \sqrt{\vol(M)}\sqrt{2c_0(\omega)} .$$ We can therefore conclude $$\frac{h_1(M)}{C\sqrt{\vol(M)}}\leq\mu. \qedhere$$
 \end{proof}

  \subsection{Stable isoperimetric constants and magnetic comass}
To prove Theorem \ref{thm:main} relating the coexact spectral gap to stable commutator length for hyperbolic manifolds, we need more refined information about magnetic geodesics. 
Fix a closed Riemannian manifold $M$ and a 1-form $\omega\in\Omega^1(M).$
Denote by $\cG_s(\omega)$ the set of magnetic geodesic trajectories with speed $s.$ Note that for $\eta$ a closed 1-form, $\cG_s(\eta)$ is the set of speed $s$ geodesic trajectories.
Define the magnetic comass with speed $s>0$ to be $$\fm_s(\omega):=\sup\limits_{\g\in\cG_s(\omega)}A_\g\omega ,$$ where $A_\g\omega$ is defined to be $$A_\g\omega=\limsup_{T\to \infty}\frac{1}{sT}\left|\int_{0}^T\omega(\dot\gamma(t))dt\right|,$$ which is well defined since $\omega$ is bounded. Note that when $\omega$ is closed, this is just the usual comass of the cohomology class of $\omega.$

Define the geodesic comass of $\omega$ to be $$\fm_\infty(\omega) = \sup_{\g\in \cG_s(0)} A_\g\omega,$$ for any $s>0.$
Note that when $M$ has Anosov geodesic flow, the Anosov closing lemma implies $$\fm_\infty(\omega)= \sup_{\g \text{ closed geodesic}} \frac{1}{\ell(\gamma)}\left|\int_\g\omega\right|.$$
Note also that when the geodesic flow is Anosov,  the quasigeodesics converge to geodesics  as the speed $s\to\infty,$ (see Section \ref{sec:hyp-mag-geods}), and one obtains $$\fm_\infty(\omega)=\lim\limits_{s\to\infty}\fm_s(\omega).$$

The magnetic comass at the strict critical speed is closely related to the norm of $\omega$, and moreover, the weak KAM theory produces specific calibrated orbits with critical speed; this is what drives our proofs.
Let $H$ be the Hamiltonian associated to a 1-form $\omega.$ Consider the equation $$H(x, p)= \frac{1}{2}|\omega_x+p|^2=c.$$ A Lipschitz function $u:M\to\R$ (recall that a Lipschitz map is almost everywhere differentiable) is a subsolution of this equation if for almost all points $x\in M$, one has $$H(x,du_x)\leq c.$$
There exist subsolutions for all $c\geq c(\omega)$, where $c(\omega)$ is Mañé's critical value, and these subsolutions can be taken to be smooth when $c>c(\omega)$.  The existence of $C^1$ subsolutions at the critical value was proved by Fathi-Siconolfi \cite{FathiSiconolfi}, this was then improved to $C^{1,1}$ by \cite{Bernard}. Subsolutions for $c= c(\omega)$ are called critical subsolutions. 

We now follow the treatment in \cite{Bernard}; note that the arXiv version of \cite{Bernard} is updated compared to the publication version, our references here are therefore to the arXiv version \cite{BernardArXiv}.

The projected Aubry set $\cA$ is defined to be the set of points $x \in M$ for which given any critical subsolution $u:M\to\R$ differentiable at $x$, the Hamiltonian of $du$ at $x$ is the critical value: $$\cA(\omega)=\{x\in M~:~ H(x,du_x)=c(\omega) \text{ for every critical subsolution } u:M\to\R\}.$$ 
Let $u:M\to\R$ be any $C^1$ critical subsolution, define the Aubry set $$\widetilde \cA(\omega)=\{(x,du_x)~:~x\in\cA(\omega)\}.$$ The following combines several results of Bernard.

\begin{thm}[Section 2 \cite{BernardArXiv} ]\label{thm:Bernard} Let $M$ be a closed oriented Riemannian manifold and let $\omega$ be a 1-form defining magnetic Hamiltonian $H$ and Lagrangian $L$. Then the set $\widetilde \cA(\omega)$ is nonempty, is contained in $H^{-1}(c(\omega))$, and is invariant under the Hamiltonian flow on $T^*M$. Moreover, for any $(x,p)\in \widetilde\cA(\omega)$ and any $C^1$ critical subsolution $u:M\to\R$, the trajectory $(x^t,p^t)$ of $(x,p)$ under the Hamiltonian flow is calibrated by $u$: $$u(x^T)- u(x^s) = \int_s^T c(\omega)+L(x^t,\dot x^t)dt,$$ for all $s<T$. For each such trajectory, the tangent vector $\dot x^t$ has energy level $$\frac{1}{2}|\dot x^t|^2=c(\omega).$$
\end{thm}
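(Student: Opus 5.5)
The approach is to prove this with the standard weak KAM machinery, organized around the Mañé potential and the Peierls barrier. On the compact manifold $M$, set
$$\Phi(x,y)=\inf_{\alpha\in\Pi(x,y)}\big(A(\alpha)+c(\omega)T(\alpha)\big),\qquad h(x,y)=\liminf_{T\to\infty}\ \inf_{\alpha\in\Pi_T(x,y)}\big(A(\alpha)+c(\omega)T\big).$$
By Theorem~\ref{thm:CIPP} we have $c(L)=c(H)=c(\omega)$. Hence $\Phi$ is finite, Lipschitz, satisfies the triangle inequality, and vanishes on the diagonal. The first step is the standard characterization of critical subsolutions: a Lipschitz $u$ satisfies $H(x,du_x)\le c(\omega)$ almost everywhere if and only if $u(y)-u(x)\le\Phi(x,y)$ for all $x,y$.

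The one-sided inequality we use is the Fenchel inequality $du_x(v)\le H(x,du_x)+L(x,v)$. Equality holds exactly when $du_x=\partial_vL(x,v)=v^\flat-\omega_x$, i.e.\ when $du_x$ is the Legendre transform of $v$. Integrating this inequality along paths gives one direction of the characterization. The converse is obtained by differentiating $\Phi(x,\cdot)$.

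Next I would define the candidate projected Aubry set as $\mathcal{A}'=\{x: h(x,x)=0\}$ and prove three things about it.

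\emph{Nonemptiness.} Because $c(\omega)$ is critical, for every $k<c(\omega)$ there are loops with negative $\cS_k$-action. Taking Tonelli minimizers, I would show that as $k\uparrow c(\omega)$ these loops have periods tending to infinity and $\cS_{c(\omega)}$-action tending to $0$. Compactness of $M$ then yields a point $x$ with $h(x,x)=0$.

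\emph{Static orbits.} For $x\in\mathcal{A}'$, take minimizing loops at $x$ with long periods. Their velocities are uniformly bounded by the standard a priori energy bound for Tonelli minimizers. Passing to a limit gives a complete Euler--Lagrange (magnetic) trajectory $x^t$ through $x$ satisfying
$$\Phi(x^s,x^T)+\Phi(x^T,x^s)=0 \quad\text{for all } s<T,$$
with $\Phi(x^s,x^T)$ equal to the action of the curve between the two times.

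\emph{Calibration.} Let $u$ be any critical subsolution. Then $u(x^T)-u(x^s)\le\Phi(x^s,x^T)$ and $u(x^s)-u(x^T)\le\Phi(x^T,x^s)$. Adding these and using the static identity forces equality in both. Hence
$$u(x^T)-u(x^s)=\int_s^T \big(c(\omega)+L(x^t,\dot x^t)\big)\,dt.$$

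Now assume $u$ is $C^1$ (such $u$ exist by Fathi--Siconolfi and Bernard, which I take as given). Differentiating the calibration identity gives equality in Fenchel's inequality at every point of the orbit. Two consequences follow. First, $du_{x^t}=\partial_vL(x^t,\dot x^t)$, so $(x^t,du_{x^t})$ is the Legendre image of the Euler--Lagrange orbit and is therefore a Hamiltonian trajectory. Second, $H(x^t,du_{x^t})=c(\omega)$. Since $H(x,p)=\tfrac12|p+\omega_x|^2$ and $p+\omega=\dot x^\flat$, this last equality is exactly $\tfrac12|\dot x^t|^2=c(\omega)$.

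These facts show that $\mathcal{A}'\subset\mathcal{A}(\omega)$, and that over $\mathcal{A}'$ the set $\{(x,du_x)\}$ lies in $H^{-1}(c(\omega))$ and is independent of the choice of $u$. It is also flow-invariant, since every point of a static orbit is again in $\mathcal{A}'$: one checks $h(x^t,x^t)\le h(x,x)+\Phi(x^t,x)+\Phi(x,x^t)=0$.

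The main obstacle is the reverse inclusion $\mathcal{A}(\omega)\subset\mathcal{A}'$, which is needed because $\mathcal{A}$ is defined through the condition that \emph{every} critical subsolution is tight at $x$. For this I would follow Fathi--Siconolfi. For each $y\notin\mathcal{A}'$, build a critical subsolution that is strict near $y$, starting from $u_y=\Phi(y,\cdot)$ and modifying it using the positivity of $h(y,y)$. Then take a convex combination of countably many such functions; since $H$ is convex in $p$, the combination is still a critical subsolution, and it is strict everywhere off $\mathcal{A}'$. Smoothing this to a $C^1$ (indeed $C^{1,1}$) critical subsolution while preserving strictness off $\mathcal{A}'$ is the delicate regularity step, which is Bernard's contribution. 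Once it is available, any $x\notin\mathcal{A}'$ is excluded from $\mathcal{A}(\omega)$, and the theorem follows.
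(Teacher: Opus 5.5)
Your nonemptiness step does not work as written. From $\cS_k(\gamma_k)<0$ you only get $\cS_{c(\omega)}(\gamma_k)<(c(\omega)-k)\,T(\gamma_k)$. That bounds the \emph{average} action per unit time, not the total, and nothing in your sketch stops $T(\gamma_k)$ from growing at least like $(c(\omega)-k)^{-1}$.

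Nor are there Tonelli minimizers of $\cS_k$ over loops when $k<c(\omega)$: iterating a negative loop shows $\cS_k$ is unbounded below. Fixed-time, fixed-endpoint minimizers do not by themselves supply the missing control. So your claim that the loops can be chosen with $\cS_{c(\omega)}$-action tending to $0$ is essentially equivalent to $\mathcal{A}'\neq\emptyset$, and it is left unproved.

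The cleanest repair uses the ingredient you already rely on at the end. Suppose $\mathcal{A}'=\emptyset$. Then every $y\in M$ has a neighborhood on which some critical subsolution is strict by a definite amount. Take a finite cover of $M$ by such neighborhoods and form the convex combination of the corresponding subsolutions. By convexity of $H$ in $p$, this gives a Lipschitz $u$ with $H(x,du_x)\le c(\omega)-\delta$ almost everywhere. Mollifying yields a smooth $u'$ with $\frac12\|du'+\omega\|_\infty^2<c(\omega)$, contradicting $c(H)=c(\omega)$. Alternatively, you could use Mather measures, or $\alpha$-limits of backward calibrated curves of a weak KAM solution.

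Apart from this, your route differs from the paper's only in being a reconstruction rather than a citation. The paper cites Bernard for nonemptiness (Theorem B), invariance (Proposition 8) and calibration (p.~6). It then derives $\frac12|\dot x^t|^2=c(\omega)$ from $\dot x^t=\partial_pH(x^t,p^t)=(p^t+\omega_{x^t})^\sharp$ and $H(x^t,p^t)=c(\omega)$, which is exactly your final computation.

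Your other steps are sound, with the hard step deferred to Fathi--Siconolfi just as the paper defers to Bernard. These are the static orbits via $\Phi$ and $h$, calibration by adding the two domination inequalities, equality in Fenchel's inequality, and invariance via $h(x^t,x^t)=0$. You also get the graph property (independence of $du_x$ from $u$), which the paper's definition of $\widetilde\cA(\omega)$ tacitly uses.

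Two small corrections:
\begin{itemize}
\item The inclusion $\mathcal{A}'\subset\cA(\omega)$ must be tested against all Lipschitz critical subsolutions differentiable at $x$, not only $C^1$ ones. Your Fenchel argument still works, because a dominated $u$ satisfies $H(x,du_x)\le c(\omega)$ at every point of differentiability.
\item The $C^1$ subsolution strict off the Aubry set is already Fathi--Siconolfi's result; Bernard's improvement is $C^{1,1}$.
\end{itemize}
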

\begin{proof} Theorem B in \cite{BernardArXiv} gives that the Aubry set is nonempty. The invariance of $\widetilde \cA(\omega)$ is Proposition 8 in \cite{BernardArXiv}. The calibration result is the first claim on page 6 of  \cite{BernardArXiv}, and the final claim follows from Hamilton's equations and the fact $H(x^t,p^t)=c(\omega)$. In particular, $\dot x^t = \d_p H(x^t,p^t),$ and the right-hand-side is the vector musically dual to $p^t+\omega_{x^t}$. Then, we can check $$\frac{1}{2}|\dot x^t|^2 = \frac{1}{2}(p^t+\omega_{x^t})(\dot x^t) = \frac{1}{2}|p^t+\omega_{x^t}|^2=H(x^t,p^t)=c(\omega).\qedhere$$
\end{proof}

\begin{thm}\label{thm:mane-crit-speed}  Let $M$ be a closed oriented Riemannian $n$-manifold with trivial first Betti number. Let $\omega$ be a 1-form. For any $s>0,$ $$\fm_s(\omega)\leq \sqrt{2 c_0(\omega)}.$$ When $c_0(\omega)>0$, equality holds when $s=s_0=\sqrt{2c_0(\omega)}$ and the magnetic comass is realized along a magnetic geodesic.
\end{thm}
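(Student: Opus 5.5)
Since $b_1(M)=0$, $\cH^1(M)=0$, so $c_0(\omega)=c(\omega)=\inf_u \frac12\norm{du+\omega}_\infty^2$. The plan is to bound averages of $\omega$ along magnetic geodesics by $\sqrt{2c_0(\omega)}$ for the upper bound, and then use the Aubry set from Theorem \ref{thm:Bernard} for the equality case.

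\emph{Upper bound.} Fix $s>0$ and $\g\in\cG_s(\omega)$, and let $u$ be smooth with $\norm{du+\omega}_\infty\le \sqrt{2c_0(\omega)}+\e$. For any $T>0$ write
$$\int_0^T\omega(\dot\g(t))\,dt=\int_0^T (du+\omega)(\dot\g(t))\,dt-\bigl(u(\g(T))-u(\g(0))\bigr).$$
The first term has absolute value at most $(\sqrt{2c_0(\omega)}+\e)\,sT$. The second is bounded by $2\norm{u}_\infty$, which does not depend on $T$. Dividing by $sT$ and taking $\limsup_{T\to\infty}$ gives $A_\g\omega\le\sqrt{2c_0(\omega)}+\e$. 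Letting $\e\to0$ and taking the supremum over $\g$ yields $\fm_s(\omega)\le\sqrt{2c_0(\omega)}$. Note that we never use that $\g$ is a magnetic geodesic; the bound holds for any speed-$s$ curve.

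\emph{Equality at $s_0=\sqrt{2c_0(\omega)}$.} Assume $c_0(\omega)>0$. By Theorem \ref{thm:Bernard} the Aubry set $\widetilde\cA(\omega)$ is nonempty, so we may pick $(x,p)\in\widetilde\cA(\omega)$ and take its trajectory $x^t$. This is a magnetic geodesic with $\frac12|\dot x^t|^2=c(\omega)=c_0(\omega)$, so it has speed $s_0$ and lies in $\cG_{s_0}(\omega)$. Fix a $C^1$ critical subsolution $u$; one exists by Fathi--Siconolfi. The calibration identity gives
$$u(x^T)-u(x^0)=\int_0^T \Bigl(c_0+\tfrac12|\dot x^t|^2-\omega(\dot x^t)\Bigr)dt=\int_0^T\bigl(s_0^2-\omega(\dot x^t)\bigr)dt.$$
Hence $\int_0^T\omega(\dot x^t)\,dt=s_0^2T-(u(x^T)-u(x^0))$. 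Dividing by $s_0T$, the boundary term vanishes in the limit because $u$ is bounded, so $A_{x^\cdot}\omega=s_0$. Combined with the upper bound, this gives $\fm_{s_0}(\omega)=s_0$, and the supremum is attained along this Aubry magnetic geodesic.

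\emph{Main obstacle.} The dynamical input, namely the existence and nonemptiness of calibrated critical-energy orbits, is supplied entirely by Theorem \ref{thm:Bernard}, so the remaining difficulty is bookkeeping. First, $b_1=0$ is needed to identify $c_0$ with $c$, because Theorem \ref{thm:Bernard} is stated at the (non-strict) critical value $c(\omega)$. Second, the sign inside the absolute value in $A_\g\omega$ must come out right, and the calibration above gives the positive sign $+s_0$ directly.
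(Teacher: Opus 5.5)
Your proposal is correct and follows the paper's proof essentially step for step. For the upper bound you use a near-optimal $u$ (with $b_1=0$ removing the harmonic forms) and discard the bounded boundary term $u(\g(T))-u(\g(0))$. For equality you take a calibrated trajectory in the Aubry set from Theorem \ref{thm:Bernard} at energy $c(\omega)=c_0(\omega)$ and show that the time-average of $\omega$ along it is exactly $s_0$.
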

\begin{proof}
The inequality follows from a standard calculation. Let $\g$ be a speed $s$ magnetic geodesic. Then for any $C^1$ function $u:M\to \R$, we have $$\frac{1}{sT}\left|\int_0^T\omega(\dot\gamma(t))dt\right| \leq  \frac{1}{sT}\left|\int_0^T(du + \omega)(\dot\gamma(t))dt\right| + \frac{1}{sT}\left|\int_0^T du(\dot\gamma(t))dt\right|.$$
Because $u$ is bounded, the term $$\left|\int_0^T du(\dot\gamma(t))dt\right| = \left| u(\gamma(T))-u(\gamma(0))\right|$$ is bounded, so that when $T\to\infty$ that error term vanishes. Because $b_1(M)=0,$ there are no harmonic 1-forms. Choosing functions $u_i$ such that $||du_i+\omega||_\infty\to s_0$, we can estimate $$\limsup_{T\to\infty}\frac{1}{sT}\left|\int_0^T\omega(\dot\gamma(t))dt\right| \leq ||du_i+\omega||_\infty=\sqrt{2c_0(\omega)} + o(1).$$
Then taking $i\to\infty$ and using magnetic geodesic trajectories $\g_j$ approximating the magnetic comass, the claimed inequality follows.

We now turn to proving equality at the critical energy level.
Let $u:M\to\R$ be a $C^1$ critical subsolution. 
Again using that $b_1(M)=0$, we have that $c_0(\omega)=c(\omega)$. Thus, by Theorem \ref{thm:Bernard}, we can take $(x,p)\in\widetilde \cA(\omega)$ with trajectory $(x^t,p^t)$ and consider the curve $\gamma(t) = x^t$, which is calibrated by $u$. Thus, $$ \frac{1}{T}\left(u(\gamma(T))-u(\gamma(0))\right) =c(\omega) + \frac{1}{T}\int_0^TL(\gamma(t),\dot\gamma(t))dt.$$
Because $u$ is bounded, the left-hand-side vanishes as $T\to\infty.$ Additionally, since pointwise we have $$\frac{1}{2}|\dot\gamma(t)|^2=c(\omega),$$ we have that, $$ c(\omega)= o(1) - \frac{1}{T}\int_0^TL(\gamma(t),\dot\gamma(t))dt=o(1)-\frac{1}{T}\int_0^T c(\omega)-\omega(\dot\gamma(t))dt.$$
Therefore, $$2c(\omega) = \lim_{T\to\infty}\frac{1}{T}\int_0^T\omega(\dot\gamma(t))dt.$$ The trajectory $\gamma$ has constant speed $s = \sqrt{2c(\omega)}$, thus, $$\sqrt{2c(\omega)}= \frac{2c(\omega)}{s} = \lim_{T\to\infty}\frac{1}{sT}\int_0^T \omega\circ\dot\gamma(t)dt\leq\fm_s(\omega),$$ and the claim follows from the fact $c_0(\omega)=c(\omega)$.
\end{proof}

When $M$ is hyperbolic, we defined the stable isoperimetric constant to be $$\rho(M) = \inf_{\substack{\g\neq 1\\ [\g]=0}} \frac{\ell(\g)}{\scl(\g)},$$ where the infimum runs over all homotopically nontrivial rationally nullhomologous closed geodesics. 
The stable commutator length function $\scl(\g)$ is the stable genus of a surface bounding (powers of) $\g$; this is a topological analogue of the stable area function considered earlier in this paper. Specifically, for a rationally nullhomologous loop $\g,$ $$\scl(\g) = \inf\left\{\frac{\chi_-(S_k)}{2k}~:~ S_k\to M \text{ and }\d S_k = \g^k,~k>0\right\},$$ where $\chi_-(S)=\max\{0,-\chi(S)\}$ for $S $ a connected surface, and which is defined additively on components for disconnected surfaces.
See Calegari's book \cite{Calegari} for a general reference on stable commutator length.

\begin{prop}\label{prop:pre-iso-gap}
    Let $M$ be a closed hyperbolic $n$-manifold with trivial first Betti number. Let $\omega$ be a Laplace eigenform with eigenvalue $\mu^2$ normalized so that $\norm{\omega}_2=1$ and such that $d\omega$ satisfies a $C$-mean-value-inequality, then $$\rho(M)\leq \frac{4\pi C \mu}{\fm_\infty(\omega)}.$$
\end{prop}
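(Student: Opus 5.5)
We need to show $\rho(M)\,\fm_\infty(\omega)\le 4\pi C\mu$. The plan is to bound the period of $\omega$ over each nullhomologous closed geodesic by a multiple of its stable commutator length. We do this by filling powers of the geodesic with pleated (hyperbolic) surfaces, whose area is controlled by Gauss--Bonnet, and then applying Stokes' theorem together with the mean-value-inequality.

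\textbf{Reduction to closed geodesics.} Since $M$ is closed hyperbolic, its geodesic flow is Anosov. By the remark following the definition of geodesic comass,
$$\fm_\infty(\omega)=\sup_{\g\text{ closed geodesic}}\frac{1}{\ell(\g)}\left|\int_\g\omega\right|.$$
Because $b_1(M)=0$, every closed geodesic $\g$ is rationally nullhomologous. If $\fm_\infty(\omega)=0$ the claim is trivial, so fix $\e>0$ and choose a closed geodesic $\g$ with
$$\frac{1}{\ell(\g)}\left|\int_\g\omega\right|\ge \fm_\infty(\omega)-\e.$$
It then suffices to prove the pointwise estimate
$$\left|\int_\g\omega\right|\le 4\pi C\mu\,\scl(\g).$$
Indeed, this gives $\fm_\infty(\omega)-\e\le 4\pi C\mu\,\scl(\g)/\ell(\g)\le 4\pi C\mu/\rho(M)$, and letting $\e\to0$ yields the proposition.

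\textbf{Bounding the period by $\scl$.} Take an admissible surface $S_k\to M$ with $\d S_k=\g^k$ and $\chi_-(S_k)/(2k)$ close to $\scl(\g)$. We may discard disk and sphere components, since they do not lower $\chi_-$ and only contribute to homotopically trivial multiples. Homotope $S_k$ rel boundary to a pleated surface: triangulate with vertices on the boundary and straighten the map so that each triangle is a geodesic triangle in $M$. The boundary already maps to the closed geodesic $\g^k$, so it stays fixed. Each geodesic triangle has area less than $\pi$. An Euler characteristic count for a triangulation with all vertices on the boundary gives at most $-2\chi(S_k)$ triangles, up to a boundary correction that can be absorbed. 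Alternatively, the pulled-back metric is hyperbolic with geodesic boundary away from cone points of angle at least $2\pi$, and Gauss--Bonnet then gives $\area(S_k)\le 2\pi\chi_-(S_k)$. Either way,
$$\area(S_k)\le 2\pi\chi_-(S_k).$$
By Stokes' theorem, exactly as in the proof of Theorem \ref{thm:cheeger},
$$k\left|\int_\g\omega\right|=\left|\int_{S_k}d\omega\right|\le \norm{d\omega}_\infty\,\area(S_k)\le 2\pi\,\norm{d\omega}_\infty\,\chi_-(S_k).$$
Dividing by $k$ gives
$$\left|\int_\g\omega\right|\le 4\pi\,\norm{d\omega}_\infty\,\frac{\chi_-(S_k)}{2k}.$$

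\textbf{Controlling $\norm{d\omega}_\infty$.} Since $\omega$ is coexact with $\Delta\omega=\mu^2\omega$, we have
$$\norm{d\omega}_2^2=\langle d^*d\omega,\omega\rangle=\mu^2\norm{\omega}_2^2=\mu^2.$$
The mean-value-inequality then gives $\norm{d\omega}_\infty\le C\mu$. Substituting and taking the infimum over $S_k$ yields $|\int_\g\omega|\le 4\pi C\mu\,\scl(\g)$, which completes the argument.

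\textbf{Main obstacle.} The delicate step is the area bound $\area(S_k)\le 2\pi\chi_-(S_k)$. It requires making the pleated-surface straightening rigorous with the boundary held on the closed geodesic $\g$, so that the boundary wedges contribute no extra area. The cleanest route is to triangulate using ideal-type spinning, or to use the Gauss--Bonnet argument for a surface with geodesic boundary and curvature at most $-1$. I expect to follow the standard Calegari argument (Calegari's book \cite{Calegari}) relating $\scl$ and hyperbolic area.
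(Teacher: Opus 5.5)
Your argument is correct and reaches the same constant $4\pi$, but by a different route from the paper.

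The paper never builds surfaces. It observes that $q(\g)=\frac{1}{2\pi C\mu}\int_\g\omega$ (integrated over geodesic representatives) is a homogeneous de Rham quasimorphism of defect at most $1$. This combines Calegari's defect bound $2\pi\norm{d\omega}_\infty$, itself proved with a pleated pair of pants, with $\norm{d\omega}_\infty\le C\mu$. The paper then applies Bavard duality, $\scl(\g)\ge |q(\g)|/2D(q)$, to get $|\int_\g\omega|\le 4\pi C\mu\,\scl(\g)$.

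You instead prove $\sarea(\g)\le 4\pi\,\scl(\g)$ by straightening admissible surfaces, and feed this into the Stokes argument of Theorem \ref{thm:cheeger}. The hyperbolic input is the same: straightened surfaces have area at most $2\pi|\chi|$. The paper needs this only for pants and packages everything as citations. You need it for arbitrary admissible surfaces, but you bypass quasimorphisms and Bavard duality entirely. Your version also shows explicitly that the proposition is the Cheeger-type estimate composed with $\sarea\le 4\pi\scl$.

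Your first justification of the area bound is not right as written. A triangulation of $S_k$ with all $V$ vertices on $\d S_k$ has $V-2\chi(S_k)$ triangles, not $-2\chi(S_k)$. For a general admissible surface the excess $\pi V$ does not vanish after dividing by $k$, because the number of boundary components can be of order $k$. There are two ways to fix this:
\begin{itemize}
\item Use surfaces with a single boundary component, via $\scl(\g)=\lim_k \mathrm{cl}(\g^k)/k$, so that $V=1$ and the excess is $\pi/k$.
\item More cleanly, use the angle count. Each straightened triangle has area $\pi$ minus its angle sum, so $\area(S_k)=\pi\bigl(V-2\chi(S_k)\bigr)-\sum_v\theta_v$. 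At each boundary vertex the fan of triangles joins the antipodal unit vectors $\pm\dot\g$, because the boundary runs along the closed geodesic. The triangle inequality on the unit sphere then gives $\theta_v\ge\pi$, hence $\area(S_k)\le -2\pi\chi(S_k)$.
\end{itemize}

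The second fix is also the correct form of your Gauss--Bonnet remark. The condition that matters is that boundary corners have angle at least $\pi$. Straightening does not guarantee interior cone angles of at least $2\pi$; interior vertices can have cone angle below $2\pi$, which is exactly why all vertices must lie on the boundary.

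Finally, $\norm{d\omega}_2\le\mu$ holds without assuming coexactness, since $\norm{d\omega}_2^2\le\langle\Delta\omega,\omega\rangle$.
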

\begin{proof}
The function $q:\pi_1M\to\R$ given by $$q(\g)= \frac{1}{2C\pi \mu}\int_{\gamma}\omega,$$ where the integral is over the unique geodesic in free homotopy class of $\g,$ is a homogeneous quasimorphism with defect at most one (see \cite{Calegari} Section 2.3.1); these are called de Rham quasimorphisms. Bavard duality (see \cite{Calegari} Section 2.5) implies $$\rho(M) = \inf_{\gamma} \frac{\ell(\gamma)}{\scl(\gamma)}\leq \inf_\gamma \frac{4\pi C \ell(\gamma)\mu}{\left|\int_{\gamma}\omega\right|} = \inf_\g\frac{4 \pi C \mu}{\frac{1}{\ell(\gamma)}\left|\int_{\gamma}\omega\right|} = \frac{4\pi C \mu}{\fm_\infty(\omega)},$$ where we used the definition of the geodesic comass in the last equality and the fact $b_1(M)=0$ ensures a sequence $\g_k$ of closed geodesics whose limiting time averages realize the comass are all rationally nullhomologous.
\end{proof}
    In dimension 3, if one uses a curl eigenform $\theta$ with eigenvalue $\mu$ instead of a Laplace eigenform, and one normalizes so that $||\theta||_\infty=1,$ one obtains the simpler estimate $$\rho(M)\leq \frac{4\pi|\mu|}{\fm_\infty(\theta)}.$$

\section{Hyperbolic geometry and magnetic geodesics}\label{sec:hyp-mag-geods}

A straightforward calculation shows that for a magnetic flow associated with an eigenform, the corresponding magnetic geodesics have geodesic curvature controlled by the eigenvalue. In particular, when the eigenvalue is small and the manifold is hyperbolic, they are quasigeodesics with small quasigeodesic constant. For high speed magnetic geodesics, such phenomena are studied in \cite{PeyerimhoffSiburg}. The utility of the following proposition comes from the precise constants and the sensitivity to the eigenvalue.

\begin{prop}\label{prop:gen-quasigeod}
    Let $\omega\in\Omega^1(M)$ be a coexact Laplace eigenform on a closed hyperbolic $n$-manifold $M$ with eigenvalue $\mu^2$ and such that $d\omega$ satisfies a $C$-mean-value-inequality. Then all of the magnetic geodesics with speed $s$ satisfying $C\mu||\omega||_2<s$ are $(1-(C\mu||\omega||_2/s)^2)^{-1/2}$-quasigeodesics.
\end{prop}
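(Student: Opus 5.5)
The plan is to bound the geodesic curvature of a speed $s$ magnetic geodesic and then use the standard fact that, in $\H^n$, curves of geodesic curvature at most $\kappa<1$ are quasigeodesics with multiplicative constant $(1-\kappa^2)^{-1/2}$.

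\emph{Step 1: curvature bound.} Let $\gamma$ be a magnetic geodesic with $|\dot\gamma|=s$. From $\nabla_{\dot\gamma}\dot\gamma=Y(\dot\gamma)$ and $g(Y(u),v)=d\omega(u,v)$, we get $|Y(u)|\le |d\omega|_x|u|$ pointwise. Here one has to use the operator norm of the 2-form, which is at most the pointwise norm $|d\omega|_x$. The acceleration is orthogonal to $\dot\gamma$, since $d\omega(\dot\gamma,\dot\gamma)=0$. Hence the geodesic curvature satisfies
$$\kappa=\frac{|\nabla_{\dot\gamma}\dot\gamma|}{s^2}\le \frac{\norm{d\omega}_\infty}{s}.$$
The $C$-mean-value-inequality gives $\norm{d\omega}_\infty\le C\norm{d\omega}_2$. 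Since $\omega$ is coexact with $\Delta\omega=\mu^2\omega$, we also have $\norm{d\omega}_2^2=\langle d^*d\omega,\omega\rangle=\langle\Delta\omega,\omega\rangle=\mu^2\norm{\omega}_2^2$, using $d^*\omega=0$. Therefore
$$\kappa\le \frac{C\mu\norm{\omega}_2}{s}<1.$$

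\emph{Step 2: comparison geometry.} Lift $\gamma$ to $\H^n$. A curve with geodesic curvature bounded by $\kappa<1$ is a quasigeodesic, and I would extract the explicit constant by comparing with the model curves of constant curvature $\kappa$, namely hypercycles (equidistant curves). A hypercycle at distance $r$ from its axis has curvature $\tanh r$. Its arclength relates to the length of its projection to the axis by the factor $\cosh r=(1-\tanh^2 r)^{-1/2}$. This is exactly the claimed constant $(1-\kappa^2)^{-1/2}$ with $\kappa=C\mu\norm{\omega}_2/s$.

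To prove the comparison, I would argue as follows. Fix $t_0$, let $\beta$ be the geodesic through $\gamma(t_0)$ and $\gamma(t_1)$, and track the distance to $\beta$ and the projection to $\beta$ along $\gamma$. The projection speed satisfies $\frac{d}{dt}\pi(\gamma)\ge \cos\theta/\cosh d$, where $d$ is the distance to $\beta$ and $\theta$ is the angle with the equidistant direction. A Riccati-type differential inequality, coming from $\kappa\le\tanh r_0$ with $r_0=\operatorname{artanh}\kappa$, should then trap $\gamma$ within distance $r_0$ of $\beta$ and give projection speed at least $\cosh(r_0)^{-1}\cdot s$. Integrating gives $d(\gamma(t_0),\gamma(t_1))\ge (1-\kappa^2)^{1/2}\ell(\gamma|_{[t_0,t_1]})$. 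The reverse inequality $d\le\ell$ is trivial.

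\emph{Main obstacle.} Step 2 is the hard part, specifically obtaining the sharp constant rather than merely some quasigeodesic constant. If the direct ODE argument is messy, I would instead cite the classical result that curves in $\H^n$ with $\kappa\le\tanh r_0<1$ stay in the $r_0$-neighborhood of a geodesic with monotone projection. An alternative is to compare via convexity of horospheres/equidistant hypersurfaces: the hypersurface at distance $r$ has second fundamental form $\tanh r$, so a curve with smaller curvature tangent to it cannot exit its convex side.
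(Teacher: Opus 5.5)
Your proposal is correct and follows the paper's proof. Step 1 is the paper's Lorentz-force estimate $|Y(v)|\le s\norm{d\omega}_\infty\le sC\mu\norm{\omega}_2$, which bounds the geodesic curvature by $C\mu\norm{\omega}_2/s$; you also usefully make explicit the identity $\norm{d\omega}_2=\mu\norm{\omega}_2$ coming from coexactness, which the paper uses without comment. For Step 2 the paper simply cites Lemma 2.5 of Leininger, so your fallback of citing the classical small-curvature quasigeodesic result is exactly the paper's route, and the hedged Riccati/projection-speed sketch (which would need more care, e.g.\ for non-planar curves in $\H^n$) is not needed.
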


\begin{proof}
Recall that the magnetic geodesic equation for the field $\Omega=d\omega$ is $$\nabla_{\dot\g}\dot\gamma = Y(\dot\gamma),$$ where $Y$ is the Lorentz force defined via the Riemannian metric by $\Omega(u,v) = g( Y(u),v).$
    We compute $$||\Omega||_\infty=||d\omega||_\infty \leq C\mu||\omega||_2,$$ by applying the mean-value-inequality.
    For a vector $v$ with norm $s=|v|$ such that $Y(v)\neq0,$
    we can estimate $$|Y(v)|^2= g( Y(v),Y(v) ) = \Omega(v,Y(v))\leq s||d\omega||_\infty |Y(v)|.$$ Dividing both sides by $|Y(v)|$ then gives $$|Y(v)|\leq  s||d\omega||_\infty \leq sC\mu||\omega||_2. $$
    Additionally, if $Y(v)=0,$ the claimed inequality is trivial.
    The geodesic curvature of a magnetic geodesic with speed $s$  is given by $$|\kappa(\gamma(t))| = s^{-2}|\nabla_{\dot\gamma(t)}\dot\gamma(t)|$$ thus $$|\kappa(\gamma(t))|  \leq C\mu||\omega||_2/s.$$ Lemma 2.5 of \cite{Leininger} implies $\g$ is a $(1-(C\mu||\omega||_2/s)^2)^{-1/2}$-quasigeodesic as soon as $C\mu||\omega||_2/s<1.$
\end{proof}

We also state a version of this result for normalized curl eigenforms in dimension three, where we have the most control. As the proof is essentially identical, we omit it.

\begin{prop}\label{prop:quasigeod}
    Let $\omega$ be a coexact curl eigenform on a closed hyperbolic 3-manifold $M$ with eigenvalue $\mu$ satisfying $0<|\mu| < s$ normalized so that $||\omega||_\infty=1.$ Then all of the magnetic geodesics with speed $s$ are $(1-(|\mu|/s)^2)^{-1/2}$-quasigeodesics.
\end{prop}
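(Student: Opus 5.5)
The plan is to repeat the proof of Proposition \ref{prop:gen-quasigeod} with the curl identity in place of the mean-value-inequality. The point is that the normalization $\norm{\omega}_\infty=1$ together with $\star d\omega=\mu\omega$ bounds the magnetic field pointwise with no analytic input. Since $d\omega=\mu\star\omega$ and $\star$ is a pointwise isometry, we get
\[
\norm{d\omega}_\infty=|\mu|\,\norm{\star\omega}_\infty=|\mu|\,\norm{\omega}_\infty=|\mu|.
\]
Concretely, in dimension three the 2-form $\Omega=d\omega$ acts on vectors by $\Omega(u,v)=\mu\,\omega(u\times v)$ up to the orientation convention. The Lorentz force is then $Y(u)=\mu\, u\times \omega^\sharp$ (up to sign), which is exactly the classical magnetic force for the field $\mu\omega^\sharp$.

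First, I bound the Lorentz force. For $|v|=s$, either $Y(v)=0$ or, as in the proof of Proposition \ref{prop:gen-quasigeod},
\[
|Y(v)|^2=\Omega(v,Y(v))\le \norm{\Omega}_\infty\, s\,|Y(v)|.
\]
This gives $|Y(v)|\le s|\mu|$. Alternatively, the cross product description gives directly $|Y(v)|=|\mu||v\times\omega^\sharp|\le s|\mu|$. Here I should check that the comass norm of a 2-form in dimension three agrees with the pointwise norm of its Hodge dual; this holds because every 2-form in dimension three is decomposable.

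Second, I bound the geodesic curvature of a speed $s$ magnetic geodesic:
\[
|\kappa|=s^{-2}|\nabla_{\dot\g}\dot\g|=s^{-2}|Y(\dot\g)|\le |\mu|/s .
\]
By the hypothesis $0<|\mu|<s$, this is strictly less than $1$.

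Finally, I apply Lemma 2.5 of \cite{Leininger}: in $\H^3$, a curve (here a lift of $\g$ to the universal cover) with geodesic curvature at most $k<1$ is a $(1-k^2)^{-1/2}$-quasigeodesic. Taking $k=|\mu|/s$ gives the claimed constant $(1-(|\mu|/s)^2)^{-1/2}$. The only step needing care is the norm comparison in the first step, i.e.\ confirming that $\norm{d\omega}_\infty$ equals $|\mu|$ rather than picking up a dimensional factor. Since $\star$ is an isometry on $\Lambda^1\to\Lambda^2$ in dimension three, I expect no difficulty there.
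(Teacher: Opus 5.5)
Your proposal is correct and matches the paper's argument, which the paper omits as "essentially identical" to the proof of Proposition \ref{prop:gen-quasigeod}. You replace the mean-value-inequality with the pointwise identity $\norm{d\omega}_\infty=|\mu|\norm{\star\omega}_\infty=|\mu|$, then get the curvature bound $|\kappa|\le|\mu|/s<1$, and finish with Lemma 2.5 of \cite{Leininger}.
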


We now show that if we have an eigenform with eigenvalue $\mu$ and we consider the integral over a quasigeodesic curve with small geodesic curvature, then that integral is comparable to the integral over the geodesic shadowed by the quasigeodesic; this then yields Theorem \ref{thm:main}.

\begin{lem}\label{lem:average-diff-bound}
Let $M$ be a closed $n$-dimensional hyperbolic manifold. 
Let $\omega\in \Omega^1(M)$ be a 1-form such that $\norm{d\omega}_\infty\leq D$ and $\norm{\omega}_\infty\leq A.$ Let $\g$ be a smooth bi-infinite parametric curve in $M$ with the absolute value of the geodesic curvature strictly less than $\kappa<1;$ this lifts to a quasigeodesic. Let $\eta$ be the hyperbolic geodesic whose lift to $\H^n$  is shadowed by a lift of $\g$.  Assume both curves are parametrized by arc length. Then $$\limsup_{T\to\infty}\frac{1}{T}\left|\int_0^T\omega(\dot\g(t))-\omega(\dot\eta(t))dt\right|\leq (A+D)\kappa.$$
\end{lem}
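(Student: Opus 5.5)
The plan is to compare the two integrals by Stokes' theorem on a thin strip in the universal cover, and to charge the discrepancy to two sources. The first is the flux of $d\omega$ through the strip, which is bounded by $D$ times its area. The second is the mismatch between the arc-length parametrization of $\gamma$ and that of its shadow on $\eta$, which is bounded by $A$ times the drift. The aim is to show that each source grows at most like $\kappa T$ up to $O(1)$ errors, so that dividing by $T$ and taking $\limsup$ gives $(A+D)\kappa$.

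\textbf{Step 1 (lift and confine).} Lift $\gamma$ and $\eta$ to $\tilde\gamma,\tilde\eta$ in $\H^n$, and pull back $\omega$. I will use the comparison with equidistant curves, whose curvature is $\tanh r$. From it, a curve with geodesic curvature $<\kappa<1$ stays within distance $r_0$ of its shadowing geodesic $\tilde\eta$, where $\tanh r_0\le\kappa$. This is the same input behind the quasigeodesic statement of Lemma 2.5 of \cite{Leininger} used in Proposition \ref{prop:gen-quasigeod}. Let $\pi:\H^n\to\tilde\eta$ be the nearest-point projection. Write $\pi(\tilde\gamma(t))=\tilde\eta(\tau(t))$, and normalize so that $\tau(0)$ is bounded.

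\textbf{Step 2 (Stokes).} Let $\Sigma_T$ be the ruled surface swept out by the geodesic segments $[\tilde\gamma(t),\pi\tilde\gamma(t)]$ for $t\in[0,T]$. Its boundary consists of $\tilde\gamma|_{[0,T]}$, the reparametrized arc $\tilde\eta|_{[\tau(0),\tau(T)]}$, and two end segments of length at most $r_0$. Stokes' theorem then gives
\[
\Big|\int_0^T\omega(\dot\gamma)\,dt-\int_{\tau(0)}^{\tau(T)}\omega(\dot\eta)\,d\tau\Big|\le D\,\area(\Sigma_T)+2Ar_0 .
\]
To bound the area, I will use the Fermi-coordinate form of the hyperbolic metric around $\tilde\eta$, namely $ds^2=dr^2+\cosh^2 r\,d\tau^2+\sinh^2 r\,d\theta^2$. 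In these coordinates the area element of $\Sigma_T$ is at most $|\dot\gamma^\perp|\,r$, and an integrated version of the curvature bound should yield $\area(\Sigma_T)\le\kappa T+O(1)$. What I would try first is to show that the signed distance $r(t)$ satisfies a differential inequality forced by the curvature bound. That inequality confines $\gamma$ to the tube and controls $\int_0^T r\,|\dot\gamma|\,dt$ by $\kappa T$.

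\textbf{Step 3 (reparametrization drift).} Next compare $\int_{\tau(0)}^{\tau(T)}\omega(\dot\eta)\,d\tau$ with $\int_0^T\omega(\dot\eta)\,dt$. The difference is at most $A\,|\tau(T)-T|+O(1)$. Since $\dot\tau=\langle\dot\gamma,\partial_\tau\rangle/\cosh r$, we get $|\dot\tau|\le1$. The curvature bound should also give $\dot\tau\ge1-\kappa$ up to an error that averages out, for instance via $\cos\angle(\dot\gamma,\partial_\tau)\ge\sqrt{1-\kappa^2}\ge1-\kappa$ together with $\cosh r\le(1-\kappa^2)^{-1/2}$. Granting this, $|\tau(T)-T|\le\kappa T+O(1)$. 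Adding Steps 2 and 3, dividing by $T$, and letting $T\to\infty$ kills all the $O(1)$ terms and gives the claimed $\limsup$ bound $(A+D)\kappa$.

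\textbf{Main obstacle.} The hard part is getting the sharp linear constant $\kappa$ in the area and drift estimates, rather than something like $\sinh r_0=\kappa/\sqrt{1-\kappa^2}$. The cleanest route I see is to avoid pointwise tube bounds and work with the averaged quantities directly. This means integrating the curvature equation along $\gamma$ in Fermi coordinates, so that the angle between $\dot\gamma$ and $\partial_\tau$ and the distance $r$ are jointly controlled by $\kappa$. If the sharp constant proves elusive, the same argument still gives the bound with $\kappa$ replaced by a function $\kappa(1+O(\kappa))$, which would suffice in the small-$\kappa$ regime where the lemma is applied.
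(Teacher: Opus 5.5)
Your architecture is the same as the paper's: Stokes on a band between the lifts, with the flux charged to $D$ and the endpoint/reparametrization error charged to $A$. But the proposal does not yet prove the lemma. Both quantitative estimates are left as ``should yield'' and ``granting this'', and the fallback $\kappa(1+O(\kappa))$ would not give the stated constant $(A+D)\kappa$. Moreover, the area route you would try first cannot work. The bound ``area element $\le|\dot\gamma^\perp|\,r$'' is too lossy, and no integrated curvature argument can repair it. A hypercycle at distance $r$ from its axis, with $\tanh r=\kappa'<\kappa$, has curvature $\kappa'$, yet $\int_0^T r\,dt=\operatorname{artanh}(\kappa')\,T>\kappa T$ once $\kappa'$ is close to $\kappa$.

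The loss is in the area element itself. In your Fermi coordinates, the band over $\tilde\gamma(t)=(r,\tau,\theta)$ has area element $\int_0^{r}(\cosh^2\rho\,\dot\tau^2+\sinh^2\rho\,|\dot\theta|^2)^{1/2}\,d\rho$. Since $\sinh\rho/\sinh r\le\cosh\rho/\cosh r$ for $\rho\le r$, this is at most $\tanh r\,(1-\dot r^2)^{1/2}\le\tanh r$. Together with your Step 1 confinement $\tanh r\le\kappa$, this gives $\area(\Sigma_T)\le\kappa T$. The confinement is true but needs an argument. At a maximum of $r\circ\tilde\gamma$ one has $0\ge(r\circ\tilde\gamma)''\ge\tanh r-\kappa$, using $\mathrm{Hess}\,r(v,v)\ge\tanh r\,(1-\langle v,\nabla r\rangle^2)$; you also need a limiting argument if the supremum is not attained. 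The paper avoids any sharp tube estimate. It cones the quadrilateral off from a point $p$, observes that the ruled face over $\gamma_T$ has curvature $\le-1$, and gets area $\le\kappa T+\pi$ from Gauss--Bonnet, using only some Hausdorff bound $L$.

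For the drift, the pointwise bound $\cos\angle(\dot\gamma,\partial_\tau)\ge\sqrt{1-\kappa^2}$ is not justified by anything in your sketch: a curvature bound controls how fast the direction turns, not the direction itself. It is also unnecessary. Use the displacement estimate $d(\tilde\gamma(0),\tilde\gamma(T))\ge\sqrt{1-\kappa^2}\,T$ from Lemma 2.5 of \cite{Leininger}, as the paper does. By the triangle inequality, $|\tau(T)-\tau(0)|=d(\pi\tilde\gamma(0),\pi\tilde\gamma(T))\ge\sqrt{1-\kappa^2}\,T-2r_0$, while $|\dot\tau|\le1$ gives $|\tau(T)-\tau(0)|\le T$. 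With $\eta$ oriented so that $\tau(T)\to+\infty$, this yields $|\tau(T)-T|\le(1-\sqrt{1-\kappa^2})\,T+O(1)\le\kappa T+O(1)$. With these two replacements your argument goes through and gives the bound $(A+D)\kappa$.
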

\begin{proof}
    We lift everything to the universal cover and compute there; we use the same Greek letters to denote the lifted curves. 
    Let $L$ be a constant bounding the Hausdorff distance between $\g$ and $\eta$, this is uniformly bounded by Lemma 2.5 in \cite{Leininger}, which also implies the curve $\gamma$ is a $(1-\kappa^2)^{-1/2}$-quasigeodesic.

    The limiting time average is independent of the initial parametrization choice of $\gamma(0)=x$ and $\eta(0)=y,$ we therefore assume $x$ and $y$ have distance at most $L.$ 
    
     For $t\in \R$, let $\alpha_t$ be the hyperbolic geodesic joining the $\eta(t)$ with $\g(t)$.     Denote by $\g_T$ the subsegment of $\g$ obtained by flowing along $\g$ for time $T;$ likewise for $\eta_T.$ Let $s(T)$ be the distance along $\eta$ of the point closest to $\g(T).$
     
    We can estimate $s(T)\leq T+2L,$ by comparing the length $s(T)$ subarc of $\eta$ with the path that traverses $\alpha_0,$ then $\gamma_T$, then the geodesic arc between $\g(T)$ and $\eta(s(T)).$ 

    Set $Q_\kappa= (1-\kappa^2)^{-1/2}$, so that $\g$ is a $Q_\kappa$-quasigeodesic.
    Next notice that because $\g$ is a $Q_\kappa$-quasigeodesic, we can estimate $d(\g(0),\g(T))\geq T/Q_\kappa.$ We can then estimate  $$T/Q_\kappa \leq 2L + s(T).$$ Combining the above, we find $|T-s(T)|\leq T-T/Q_\kappa + 2L.$

    We can therefore estimate $$\ell(\alpha_T)\leq |T-s(T)| + L \leq |T-T/Q_\kappa |+3L=(1-\sqrt{1-\kappa^2})T+3L\leq\kappa T+3L,$$ where in the final inequality, we used that $\kappa \in[0,1]$, so that $$ 1-1/Q_\kappa = 1-\sqrt{1-\kappa^2}\leq\kappa.$$ 

      We now compare the integral across $\gamma_T$ and $\eta_T$. The idea is to apply Stoke's theorem to a quadrilateral cobounding the two curves and the arcs $\alpha_0$ and $\alpha_T$, then bounding the area of this quadrilateral by $\kappa T+O(1)$. 
      
        We first set some notation.
        For a point $p\in \H^n$ and an arc $\tau$; let $F_\tau(p)$ be the union of all geodesic arcs from $p$ to $\tau;$ this is a ruled triangle. Define the rectangular cone $R_T(p)$ to be the 2-chain $$R_T(p) =  F_{\gamma_T}(p)-  F_{\alpha_T}(p) +F_{\alpha_0}(p) -F_{\eta_T}(p).$$ 
        
        We can choose $p$, depending on $T$, so that each face $F_\tau(p)$ corresponding to an arc $\tau\in\{\alpha_0,\alpha_T,\eta_T,\gamma_T\}$ is an immersed ruled triangle, as this is a generic condition.
        For  $\tau\in\{\alpha_0,\alpha_T,\eta_T\}$, it is a geodesic triangle, so has area at most $\pi.$ For the remaining face, its area can be bounded by $\kappa T+O(1)$ as follows. 
        Because $F_{\gamma_T}(p)$ is ruled, the Gauss equation in hyperbolic space implies it has Gaussian curvature at most $-1$. Denote by $\theta_i$ the intrinsic interior angles at the corners of the triangle. By Gauss-Bonnet, we can compute $$2\pi = \int_{F_{\gamma_T}(p)} K + \int_{\d F_{\gamma_T}(p)} k + \sum(\pi-\theta_i),$$ where $K$ and $k$ are the intrinsic curvatures of $F_{\gamma_T}(p)$ and $\d F_{\gamma_T}(p)$, which satisfy $K\leq-1$ and $|k| < \kappa$.
        
        We thus can estimate $$\area(F_{\gamma_T}(p))\leq  -  \int_{F_{\gamma_T}(p)} K = \sum(\pi-\theta_i) -2\pi + \int_{\d F_{\gamma_T}(p)} k \leq \kappa T +O(1),$$ where we used that $\gamma_T$ is the only side of $\d F_{\gamma_T}(p)$ with nonzero geodesic curvature and the pointwise curvature bound along $\gamma_T$.

 We can therefore compare the integral of $\omega$ over $\g_T$ and $\eta_T$ via the assumed pointwise bounds on $\omega$ and $d\omega$:
    $$\left|\int_{\gamma_T}\omega-\int_{\eta_T}\omega\right| =\left| \int_{R_T(p)}d\omega - \int_{\alpha_0} \omega +\int_{\alpha_T}\omega\right|\leq D\kappa T + A\ell(\alpha_T) + O(1).$$ 
    Dividing by $T$, using the estimate $\ell(\alpha_T)\leq \kappa T+3L$, then taking $T\to\infty$, we obtain $$\limsup_{T\to\infty}\frac{1}{T}\left|\int_0^T\omega\circ\dot\g(t)-\omega\circ\dot\eta(t)dt\right|\leq D\kappa + A\kappa.\qedhere$$ 
\end{proof}

 \begin{thm}\label{thm:pre-main}
     Let $M$ be a closed hyperbolic $n$-manifold with $b_1(M)=0$ and $\inj(M)>r.$ Let $\mu^2<1$ be the smallest eigenvalue of the Hodge Laplacian acting on coexact 1-forms; let $\omega$ be an eigenform with eigenvalue $\mu^2$ normalized so that $\norm{\omega}_2=1$. Assume $C_r\mu/\sqrt{2c_0(\omega)}<\e<1.$ Then $$\frac{\rho(M)}{4\pi C_r }\left(\frac{1-C_r\sqrt{\vol(M)}(1+\mu)\e}{ \sqrt{\vol(M)}}\right)\leq \mu,$$ where $C_r$ depends only on $r$ and $n$ (indeed, it comes from the mean-value-inequality in Proposition \ref{prop:MV}). 
 \end{thm}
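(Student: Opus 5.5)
The plan is to combine Proposition \ref{prop:pre-iso-gap}, which gives $\rho(M)\leq 4\pi C_r\mu/\fm_\infty(\omega)$, with a lower bound on $\fm_\infty(\omega)$. That lower bound comes from shadowing the critical-speed Aubry magnetic geodesic by a hyperbolic geodesic. Throughout, write $s_0=\sqrt{2c_0(\omega)}$. Since $b_1(M)=0$, Theorem \ref{thm:mane-crit-speed} provides a speed-$s_0$ magnetic geodesic $\g$ along which
\[
\lim_{T\to\infty}\frac{1}{s_0T}\int_0^T\omega(\dot\g)\,dt=s_0 .
\]
Lemma \ref{lem:norm-comp} together with $\norm{\omega}_2=1$ gives $s_0\geq 1/\sqrt{\vol(M)}$.

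Next, control the geometry of $\g$. By the mean-value inequality (Proposition \ref{prop:MV}, with $\mu^2<1$), $\norm{d\omega}_\infty\leq C_r\mu$. The computation in the proof of Proposition \ref{prop:gen-quasigeod} then shows that the geodesic curvature of $\g$ satisfies $|\kappa|\leq C_r\mu/s_0<\e<1$. So, after reparametrizing by arc length, $\g$ is a bi-infinite curve with curvature strictly below $\e$. It is therefore shadowed by a hyperbolic geodesic $\eta$.

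Now apply Lemma \ref{lem:average-diff-bound} with $\kappa=\e$, $D=C_r\mu$, and $A=\norm{\omega}_\infty$. To bound $A$, I would apply the mean-value inequality to $\omega$ itself, which is also an eigenform with eigenvalue below $1$; this gives $A\leq C_r$. The lemma then yields
\[
\limsup_{T\to\infty}\frac1T\left|\int_0^T\omega(\dot\g)-\omega(\dot\eta)\right|\leq C_r(1+\mu)\e .
\]
In arc-length parametrization the average of $\omega$ along $\g$ equals $s_0$, so
\[
\fm_\infty(\omega)\geq A_\eta\omega\geq s_0-C_r(1+\mu)\e\geq \frac{1-C_r\sqrt{\vol(M)}(1+\mu)\e}{\sqrt{\vol(M)}}.
\]

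If the right-hand side is nonpositive, the claimed inequality holds trivially. Otherwise, substitute this bound into Proposition \ref{prop:pre-iso-gap} (using $C=C_r$) and rearrange to obtain the stated inequality.

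The main obstacle is bookkeeping rather than anything conceptual. One must check that the Aubry trajectory is a genuine smooth bi-infinite curve, which holds because magnetic geodesics are smooth. One must also confirm that the single constant $C_r$ serves both as the mean-value constant for $\omega$ and for $d\omega$. This relies on $d\omega$ being an exact eigenform with the same eigenvalue $\mu^2$, so that Proposition \ref{prop:MV} applies in degrees 1 and 2 with $\L=1$; taking the maximum of the two constants handles this.
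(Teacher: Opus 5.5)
Your proposal is correct and follows essentially the same route as the paper. You take the calibrated Aubry magnetic geodesic at speed $s_0=\sqrt{2c_0(\omega)}$, bound its curvature by $C_r\mu/s_0<\e$, and compare its $\omega$-average with that of the shadowed geodesic via Lemma \ref{lem:average-diff-bound} (with $A\le C_r$, $D\le C_r\mu$). You then use $s_0\ge \vol(M)^{-1/2}$ from Lemma \ref{lem:norm-comp} and feed the resulting lower bound on $\fm_\infty(\omega)$ into Proposition \ref{prop:pre-iso-gap}. The differences are cosmetic: the paper phrases the comparison as $\fm_{s_0}(\omega)-C_r(1+\mu)\e\le\fm_\infty(\omega)$ rather than bounding $A_\eta\omega$ directly, and you spell out the bounds on $A$ and $D$, which the paper leaves implicit.
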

 
\begin{proof}
Set $s_0=\sqrt{2c_0(\omega)}.$
Because $C_r\mu<s_0$ and magnetic geodesics are smooth, we can apply Lemma \ref{lem:average-diff-bound} and compare $\fm_{s_0}(\omega)$ and $\fm_\infty(\omega),$ using the quasigeodesicity coming from Proposition \ref{prop:gen-quasigeod}. Let $\g$ be a calibrated magnetic geodesic trajectory with speed $s_0$ realizing the magnetic comass $\fm_{s_0}(\omega)$ given by Theorem \ref{thm:mane-crit-speed}. Let $\eta$ be the geodesic shadowed by the $\gamma,$ parametrized with speed $s_0.$

Lemma \ref{lem:average-diff-bound} and arc-length reparameterization implies $$\limsup_{T\to\infty}\frac{1}{s_0T}\left|\int_0^T\omega\circ\dot\g(t)-\omega\circ\dot\eta(t)dt\right|\leq C_r (1+\mu)\e.$$
We can therefore compute \begin{align*}
    \fm_{s_0}(\omega)&=\limsup_{T\to\infty}\frac{1}{s_0T} \left|\int_0^T\omega\circ\dot\gamma(t)dt\right| \\
    &=\limsup_{T\to\infty}\frac{1}{s_0T}\left|\int_0^T\omega\circ\dot\g(t)-\omega\circ\dot\eta(t)+\omega\circ\dot\eta(t) dt\right|\\
    &\leq\limsup_{T\to\infty}\frac{1}{s_0T}\left|\int_0^T\omega\circ\dot\g(t)-\omega\circ\dot\eta(t)dt\right|+ \frac{1}{s_0T}\left|\int_0^T\omega\circ \dot\eta(t) dt\right|\\
    &\leq C_r (1+\mu)\e  + \fm_\infty(\omega).
\end{align*}

Therefore $$  \fm_{s_0}(\omega)- C_r(1+\mu)\e\leq \fm_{\infty}(\omega).$$
If $\fm_{s_0}(\omega)-C_r(1+\mu)\e\leq0,$ then the left-hand-side of the inequality in the theorem is non-positive and the claim is trivially true.
We therefore assume that the left-hand-side is positive, then by
plugging this into the estimate of Proposition \ref{prop:pre-iso-gap} gives $$\rho(M)\leq \frac{4\pi C_r \mu}{\fm_{\infty}(\omega)}\leq \frac{4\pi C_r \mu}{\fm_{s_0}(\omega)-C_r(1+\mu)\e}.$$

By Lemma \ref{lem:norm-comp}, Theorem \ref{thm:mane-crit-speed}, and the fact $\norm{\omega}_2=1$, we have $$ \frac{1}{\sqrt{\vol(M)}}\leq \fm_{s_0}(\omega)=s_0.$$ If $\vol(M)^{-1/2} - C_r(1+\mu)\e \leq 0,$ the claim is again trivial, so we assume otherwise. We then obtain $$\rho(M) \leq \frac{4\pi C_r \mu}{\frac{1}{\sqrt{\vol(M)}}-C_r(1+\mu)\e},$$ from which the claim follows.
\end{proof}

The main theorem now follows from book-keeping.

\begin{thm}[cf. Theorem \ref{thm:intro}]\label{thm:main}
         Let $M$ be a closed hyperbolic $n$-manifold with $b_1(M)=0$ and $\inj(M)>r.$ Let $\mu^2<1$ be the smallest eigenvalue of the Hodge Laplacian acting on coexact 1-forms. Then either $$\frac{1}{8C_r^2\vol(M)}\leq \mu$$ or else
         $$\frac{\rho(M)}{8\pi C_r\sqrt{\vol(M)}}\leq\mu,$$ where $C_r$ is the constant from the mean-value-inequality.
\end{thm}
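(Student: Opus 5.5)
We derive this from Theorem \ref{thm:pre-main} by a case split on the size of $\mu$ relative to the strict critical speed $s_0=\sqrt{2c_0(\omega)}$. Fix a $\mu^2$-eigenform $\omega$ with $\norm{\omega}_2=1$. By Lemma \ref{lem:norm-comp}, $s_0\geq \vol(M)^{-1/2}$. We choose $\e$ so that the factor $1-C_r\sqrt{\vol(M)}(1+\mu)\e$ in Theorem \ref{thm:pre-main} is at least $1/2$. Since $\mu<1$, we have $1+\mu<2$, so it suffices to take
$$\e=\frac{1}{4C_r\sqrt{\vol(M)}}.$$
Note that $C_r\geq 1$ may be assumed (enlarging the mean-value constant is harmless), and $\vol(M)\geq 1$ is not needed. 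Since $\norm{\omega}_\infty\leq C_r$ and $\norm{\omega}_2\leq\sqrt{\vol(M)}\norm{\omega}_\infty$, we get $C_r\sqrt{\vol(M)}\geq 1$, and hence $\e\leq 1/4<1$.

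\emph{Case 1: $C_r\mu/s_0<\e$.} The hypothesis of Theorem \ref{thm:pre-main} holds. Its conclusion gives
$$\frac{\rho(M)}{4\pi C_r\sqrt{\vol(M)}}\left(1-C_r\sqrt{\vol(M)}(1+\mu)\e\right)\leq\mu.$$
With our choice of $\e$, the bracket is at least $1-\tfrac{1+\mu}{4}\geq \tfrac12$. This yields $\rho(M)/(8\pi C_r\sqrt{\vol(M)})\leq\mu$, the second alternative. Strictly, the theorem requires $C_r\mu/s_0<\e$ with strict inequality, which is exactly the case hypothesis.

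\emph{Case 2: $C_r\mu/s_0\geq\e$.} Then
$$\mu\geq \frac{\e s_0}{C_r}\geq \frac{1}{4C_r^2\sqrt{\vol(M)}\sqrt{\vol(M)}}=\frac{1}{4C_r^2\vol(M)}\geq\frac{1}{8C_r^2\vol(M)},$$
which is the first alternative (even with a better constant).

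The only delicate point is the bookkeeping in choosing $\e$. It must satisfy three requirements at once: $\e<1$, the bracket must be bounded below by $1/2$ uniformly in $\mu<1$, and Case 2 must produce exactly the $1/(8C_r^2\vol(M))$ bound. One could instead take $\e=1/(2C_r\sqrt{\vol(M)}(1+\mu))$, which gives the bracket exactly $1/2$. Case 2 then gives $\mu\geq 1/(2(1+\mu)C_r^2\vol(M))\geq 1/(4C_r^2\vol(M))$. Either choice works, and I would present whichever matches the stated constants most transparently.
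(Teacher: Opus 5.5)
Your proposal is correct and is essentially the paper's argument. You apply Theorem \ref{thm:pre-main} with $\e=1/(4C_r\sqrt{\vol(M)})$, check $\e<1$ via $1=\norm{\omega}_2\le C_r\sqrt{\vol(M)}$, and use $\mu<1$ to bound the bracket below by $1/2$. The only difference is bookkeeping: you split directly on whether $C_r\mu/s_0<\e$ and use the full bound $s_0\ge\vol(M)^{-1/2}$ from Lemma \ref{lem:norm-comp}, which gives the first alternative with the better constant $1/(4C_r^2\vol(M))$; the paper instead argues by contrapositive and uses only the weaker $s_0>\tfrac12\vol(M)^{-1/2}$. Your aside that $C_r\ge 1$ may be assumed is never used and is not quite a free assumption, since enlarging $C_r$ weakens the conclusion, so it is best dropped.
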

\begin{proof}
Let $\omega$ be an eigenform with eigenvalue $\mu^2$ normalized so that $\norm{\omega}_2=1.$ By Lemma \ref{lem:norm-comp}, we have $$\frac{1}{2}\vol(M)^{-1/2}< \sqrt{2c_0(\omega)}=:s_0.$$ Assume $$\mu\leq \frac{1}{8C_r^2}\vol(M)^{-1}.$$ Then it follows that $$C_r\mu/s_0<C_r\frac{\frac{1}{8C_r^2}\vol(M)^{-1}}{\frac{1}{2}\vol(M)^{-1/2}}\leq \frac{1}{4C_r\sqrt{\vol(M)}}.$$ 
    Notice that $(4C_r\sqrt{\vol(M)})^{-1}< 1$, since we can scale $\omega$ to obtain a form $\eta$ with $||\eta||_\infty=1$, then we can estimate $$1=\norm{\eta}_\infty\leq C_r\norm{\eta}_2\leq C_r\sqrt{\vol(M)}||\eta||_\infty = C_r\sqrt{\vol(M)}.$$
    We therefore can apply Theorem \ref{thm:pre-main} with $\e=\frac{1}{4C_r\sqrt{\vol(M)}}<1$ to obtain $$\frac{\rho(M)}{4\pi C_r }\left(\frac{1-C_r\sqrt{\vol(M)}(1+\mu)\frac{1}{4C_r\sqrt{\vol(M)}}}{ \sqrt{\vol(M)}}\right)\leq \mu.$$

    Simplifying, we find $$\frac{\rho(M)}{4\pi C_r \sqrt{\vol(M)}}\left(1-(1+\mu)/4\right)\leq\mu.$$ Since $\mu<1$, the claim follows. 
\end{proof}

\section{Universal critical speed and the spectral gap}\label{sec:uni-crit-vel}

Here we note some interesting connections between curl eigenforms and Mañé's critical values. The starting point is the following theorem of Contreras, which requires the Mañé's universal critical value: $$\tilde c(\omega) = \frac{1}{2}\inf_{u: \widetilde M\to\R} ||\pi^*\omega+du ||_\infty^2$$ where  $\pi:\widetilde M\to M$ is the universal covering map.
The universal critical speed $\tilde s$ is then $\tilde s = \sqrt{2\tilde c(\omega)}.$ The dynamics of the magnetic geodesic flows undergoes a dramatic change at energy level $\tilde c(\omega).$
Note that there is a weighted isoperimetric interpretation of this critical value, which is related to the degree one Cheeger constant, due to Burns-Paternain \cite{BurnsPaternain}.

\begin{thm}[Theorem D \cite{Contreras}]\label{thm:subcritnull}
    Let $M$ be a closed oriented Riemannian manifold and $\omega$ a coexact form defining a magnetic field. Then for almost every speed $s$ less than the universal critical speed $\sqrt{2\tilde c(\omega)}$, there is a contractible periodic orbit with speed $s$.
\end{thm}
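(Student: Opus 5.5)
The plan is the variational argument of Contreras: a mountain-pass scheme for the free-period action functional $\cS_k$ on contractible loops, together with Struwe's monotonicity trick. Fix an energy $k$ with $0<k<\tilde c(\omega)$; a speed $s<\sqrt{2\tilde c(\omega)}$ corresponds to $k=s^2/2$. I would work on the Hilbert manifold of contractible $W^{1,2}$ loops $x:\R/T\mathbb{Z}\to M$ with free period $T>0$, identified with $W^{1,2}(S^1,M)\times(0,\infty)$. On this space $\cS_k(x,T)=\int_0^T L(x,\dot x)\,dt+kT$ is $C^{1,1}$. Its critical points with $T>0$ are exactly the periodic magnetic geodesics of energy $k$, that is, of speed $\sqrt{2k}$.

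\textbf{Mountain-pass geometry.} First I would check two facts.
\begin{enumerate}
\item Loops that are short in the $L^2$ energy sense have positive action. Since $|\int\omega(\dot x)|\le \norm{\omega}_\infty \ell(x)$ and $\ell(x)^2\le T\int|\dot x|^2$, one gets $\cS_k\ge \frac{\ell^2}{2T}-\norm{\omega}_\infty\ell+kT>0$ whenever $\ell$ is small compared to $kT$. A neighbourhood of the constant loops therefore has action at least some $\delta>0$ on its boundary, while $\cS_k(\text{const},T)=kT\to0$ as $T\to0$.
\item There is a contractible loop $\g_1$ with $\cS_k(\g_1)<0$. This is the Lagrangian characterization of $\tilde c$, namely the analogue of Theorem~\ref{thm:CIPP} on $\widetilde M$: on the universal cover every loop is contractible, and $k<\tilde c(\omega)=c(\tilde L)$, so some loop in $\widetilde M$ has negative $\cS_k$. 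Its projection is a contractible loop in $M$ with the same action.
\end{enumerate}
Let $\Gamma$ be the set of paths in the space of contractible loops from a short constant loop to $\g_1$. Define $c(k)=\inf_{\gamma\in\Gamma}\max\cS_k\circ\gamma$. Then $c(k)\ge\delta>0$.

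\textbf{Main obstacle.} The Palais--Smale condition fails for $\cS_k$: a Palais--Smale sequence may have periods $T_n\to\infty$, or $T_n\to0$ with loops collapsing. For $T_n\to0$, the energy estimate above forces the action to tend to $0$, which contradicts $c(k)>0$; this case is handled by the positivity of the minimax level. The serious case is $T_n\to\infty$. Here I would use Struwe's monotonicity trick. Since $\partial_k\cS_k=T\ge0$, the function $k\mapsto c(k)$ is nondecreasing on $(0,\tilde c)$, because $\g_1$ stays negative for nearby $k$. Hence $c$ is differentiable at almost every $k$. At a point of differentiability $k_0$, take $k_n\downarrow k_0$ and near-optimal paths $\gamma_n$ for $k_n$. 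On the portion of $\gamma_n$ where $\cS_{k_0}$ is near $c(k_0)$, the difference quotient bounds $T=(\cS_{k_n}-\cS_{k_0})/(k_n-k_0)$ by roughly $c'(k_0)+1$. A deformation-lemma argument restricted to the bounded-period region then produces a Palais--Smale sequence for $\cS_{k_0}$ whose periods are bounded above and also bounded below by the previous case.

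\textbf{Conclusion.} With periods bounded in $(0,\infty)$, compactness of $M$ gives the Palais--Smale condition along this sequence, via the standard $W^{1,2}$ compactness for Tonelli Lagrangians of the form $L=\frac12|v|^2-\omega(v)$. We obtain a critical point $(x,T)$ with $T>0$ and $\cS_{k_0}(x,T)=c(k_0)>0$, so $x$ is nonconstant. It is a contractible periodic magnetic geodesic of energy $k_0$, that is, of speed $\sqrt{2k_0}$, and this holds for almost every $k_0<\tilde c(\omega)$, as claimed.
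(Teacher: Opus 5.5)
Your strategy is Contreras's own argument: a mountain pass for $\cS_k$ on the component of contractible loops, with Struwe's monotonicity trick supplying bounded-period Palais--Smale sequences for almost every $k$. (The paper only quotes the result.) However, the step everything rests on, $c(k)\ge\delta>0$, does not follow from the estimate you give.

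The bound $|\int_x\omega|\le\norm{\omega}_\infty\ell$ is linear in length. For the energies in question it gives no uniform positive lower bound on any set separating constant loops from $\g_1$:
\begin{itemize}
\item On $\{\ell<\e kT\}$ it gives $\cS_k\ge kT(1-\e\norm{\omega}_\infty)$. But the boundary of this set contains tiny loops with $T\to0$ whose action tends to $0$, so the infimum of $\cS_k$ there is $0$, not some $\delta>0$.
\item Use the unit-circle parametrization, with $E(x)=\int_0^1|x'|^2\,ds$, so that $\cS_k(x,T)=\frac{E}{2T}-\int_x\omega+kT$. On a sphere $\{E=\alpha\}$, the best your bound gives after optimizing in $T$ is $\sqrt{\alpha}\,(\sqrt{2k}-\norm{\omega}_\infty)$. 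This is negative whenever $k<\frac12\norm{\omega}_\infty^2$. Taking $u=0$ in the definition gives $\tilde c(\omega)\le\frac12\norm{\omega}_\infty^2$, so this covers every energy the theorem concerns.
\end{itemize}
You use $c(k)>0$ to rule out $T_n\to0$ and to put the minimax level strictly above the endpoint values of paths in $\Gamma$. So this is a genuine gap.

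The missing idea is that for short loops the magnetic term is quadratic in length, not linear. A loop of length $\ell$ below a geometric threshold bounds a coned disk of area at most $C\ell^2$. By Stokes, $|\int_x\omega|\le C\norm{d\omega}_\infty\ell^2\le C'E$. Combined with $\frac{E}{2T}+kT\ge\sqrt{2kE}$, this gives $\cS_k(x,T)\ge\sqrt{2kE}-C'E$. For small $\alpha$ this is nonnegative when $E\le\alpha$, and at least $\sqrt{2k\alpha}-C'\alpha>0$ on $\{E=\alpha\}$. Hence $E(\g_1)>\alpha$, every path in $\Gamma$ crosses $\{E=\alpha\}$, and $c(k)>0$.

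A smaller point concerns the case $T_n\to0$. There it is the $T$-component of the Palais--Smale condition, $\frac{1}{2T_n^2}\int_0^1|x_n'|^2\,ds\to k$, not your lower bound, that forces $\ell_n=O(T_n)$ and hence $\cS_k(x_n,T_n)\to0$.

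With these repairs, the rest of your outline matches Contreras's proof: monotonicity and a.e.\ differentiability of $c$, the period bound at differentiability points, and compactness of Palais--Smale sequences with periods in a compact subset of $(0,\infty)$.
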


This, combined with Proposition \ref{prop:quasigeod} implies a relationship between eigenvalues of a curl eigenform and the corresponding universal critical value. 

\begin{prop}\label{lem:mane-eigval-ineq} Let $M$ be a closed hyperbolic 3-manifold and let $\omega$ be a coexact curl eigenform with eigenvalue $\mu$ normalized so that $\norm{\omega}_\infty=1$. Then $\sqrt{2\tilde c(\omega)}\leq |\mu| $. 
\end{prop}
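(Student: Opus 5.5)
We argue by contradiction, combining Theorem \ref{thm:subcritnull} with Proposition \ref{prop:quasigeod}. Suppose $|\mu|<\sqrt{2\tilde c(\omega)}$. The set of speeds $s$ with $|\mu|<s<\sqrt{2\tilde c(\omega)}$ is then a nonempty open interval, so it has positive measure. Theorem \ref{thm:subcritnull} applies because $\omega$ is coexact, and it gives, for almost every speed in this interval, a contractible periodic magnetic geodesic. Fix one such speed $s$ and call the orbit $\gamma$.

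Since $0<|\mu|<s$ and $\norm{\omega}_\infty=1$, Proposition \ref{prop:quasigeod} says that $\gamma$ is a $(1-(|\mu|/s)^2)^{-1/2}$-quasigeodesic. The case $\mu=0$ cannot occur, since a coexact form with $\star d\omega=0$ is closed and coexact, hence zero. Concretely, the proof of that proposition gives the curvature bound $|\kappa|\le|\mu|/s<1$. For a curl eigenform this uses $|d\omega|=|\star d\omega|=|\mu||\omega|\le|\mu|$ pointwise, so the Lorentz force satisfies $|Y(v)|\le|\mu|s$.

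Lift $\gamma$ to $\H^3$. Because $\gamma$ is contractible and periodic with period $T>0$, its lift $\tilde\gamma:\R\to\H^3$ is itself periodic, with $\tilde\gamma(t+T)=\tilde\gamma(t)$, and is therefore bounded. On the other hand, the bound $|\kappa|<1$ via Lemma 2.5 of \cite{Leininger} makes $\tilde\gamma$ a quasigeodesic. In particular $d(\tilde\gamma(0),\tilde\gamma(nT))\ge nsT/Q-O(1)\to\infty$ as $n\to\infty$, which contradicts periodicity of the lift.

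An alternative, more elementary way to get the contradiction is to note that a closed curve in $\H^3$ with geodesic curvature everywhere less than $1$ cannot exist. The quasigeodesic statement in Lemma 2.5 of \cite{Leininger} already encodes this, and the curvature bound is strictly below $1$ since $s>|\mu|$. This contradiction shows $|\mu|\ge\sqrt{2\tilde c(\omega)}$.

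The main obstacle is the ``almost every'' clause in Theorem \ref{thm:subcritnull}. This is handled by working in the open interval $(|\mu|,\sqrt{2\tilde c(\omega)})$, which has positive measure and so contains a good speed. A second point to check is that Proposition \ref{prop:quasigeod}'s bound uses the strict inequality $|\mu|<s$, which holds on this interval. Conclude that $\sqrt{2\tilde c(\omega)}\le|\mu|$.
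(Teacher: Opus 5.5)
Your proposal is correct and follows the paper's proof. Assuming $|\mu|<\sqrt{2\tilde c(\omega)}$, Theorem \ref{thm:subcritnull} gives a contractible periodic magnetic geodesic at some speed in $(|\mu|,\sqrt{2\tilde c(\omega)})$, Proposition \ref{prop:quasigeod} makes that orbit a quasigeodesic, and a quasigeodesic cannot be contractible. You also fill in two points the paper leaves implicit: that $\mu\neq 0$, so the proposition applies, and why a closed, bounded lift to $\H^3$ contradicts quasigeodesicity.
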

\begin{proof}
    Suppose $|\mu|<\tilde s = \sqrt{2\tilde c(\omega)}$, then there is an open interval $ J = (|\mu|,\tilde s)$ such that the magnetic geodesics with speed $s \in J$ are all quasigeodesics by Proposition \ref{prop:quasigeod}. But by Theorem \ref{thm:subcritnull}, there is some speed $s$ in this interval such that there exists a contractible magnetic geodesic with that speed. Since quasigeodesics are not contractible, we have a contradiction.
\end{proof}

    There can be a gap between Mañé's universal  critical speed and the smallest eigenvalue. In particular, for a normalized curl eigenform $\omega$ with $\norm{\omega}_\infty=1$, the variational definition of the critical speed implies $\sqrt{2\tilde c(\omega)} \leq 1$. But there are examples of closed hyperbolic 3-manifolds with smallest curl eigenvalue having absolute value greater than $\sqrt{2}$, see \cite{LinLipnowski} for instance.

We end with some examples which use the above inequality to contrast the behavior of the strict and universal critical values.

\begin{prop}\label{prop:examples}
    There exists a sequence $M_n$ of closed hyperbolic 3-manifolds with $\inj(M_n)>r$ and volume $V_n=\vol(M_n)\to\infty$ along with curl eigenforms $\omega_n\in\Omega^1(M_n)$ normalized so that $||\omega_n||_\infty=1$ and such that for constants $c,C>0$  the following holds $$\frac{\tilde c(\omega_n)}{c_0(\omega_n)}\leq CV_n^3e^{-2cV_n}.$$
\end{prop}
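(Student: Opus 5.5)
The plan is to bound the numerator and denominator separately: an upper bound on $\tilde c(\omega_n)$ from Proposition \ref{lem:mane-eigval-ineq}, and a lower bound on $c_0(\omega_n)$ from Lemma \ref{lem:norm-comp} together with the mean-value-inequality of Proposition \ref{prop:MV} (or Theorem \ref{thm:appendix}).

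For the manifolds, I will take the sequence from Theorem C of \cite{Rudd}. It supplies closed hyperbolic 3-manifolds $M_n$ with $\inj(M_n)>r$ and $b_1(M_n)=0$, and volumes $V_n\to\infty$. On these the smallest coexact Laplace eigenvalue is exponentially small: $\lambda^*_n\leq C'e^{-2cV_n}$ for constants $c,C'>0$. Since curl eigenvalues square to coexact Laplace eigenvalues, there is a coexact curl eigenform $\omega_n$ whose eigenvalue $\mu_n$ satisfies $|\mu_n|\leq \sqrt{C'}e^{-cV_n}$. I normalize so that $\norm{\omega_n}_\infty=1$.

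\emph{Numerator.} Proposition \ref{lem:mane-eigval-ineq} gives $\sqrt{2\tilde c(\omega_n)}\leq|\mu_n|$. Hence
$$\tilde c(\omega_n)\leq \tfrac12 \mu_n^2\leq \tfrac{C'}{2}e^{-2cV_n}.$$

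\emph{Denominator.} Since $|\mu_n|<1$ for large $n$, the eigenvalue is bounded. The mean-value-inequality then applies with a constant $C_r$ depending only on $r$:
$$1=\norm{\omega_n}_\infty\leq C_r\norm{\omega_n}_2.$$
Because $\omega_n$ is coexact, Lemma \ref{lem:norm-comp} gives $\norm{\omega_n}_2\leq \sqrt{V_n}\sqrt{2c_0(\omega_n)}$. Combining the two,
$$c_0(\omega_n)\geq \frac{1}{2C_r^2V_n}.$$

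Dividing the two bounds gives
$$\frac{\tilde c(\omega_n)}{c_0(\omega_n)}\leq C'C_r^2V_ne^{-2cV_n}.$$
This is already stronger than the claimed bound $CV_n^3e^{-2cV_n}$, since $V_n\geq 1$ eventually. The extra powers of $V_n$ in the statement presumably absorb polynomial factors in the eigenvalue estimate of \cite{Rudd}. For example, if that theorem only gives $|\mu_n|\leq C''V_ne^{-cV_n}$, the same argument yields exactly $V_n^3$.

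The main obstacle is making sure the cited examples actually deliver what is used. Specifically:
\begin{itemize}
\item the exponential decay of the eigenvalue must hold with an explicit (at most polynomial) volume prefactor;
\item the manifolds must have a uniform injectivity radius lower bound, so the mean-value constant does not depend on $n$;
\item $b_1(M_n)=0$ should hold, though it is only needed to identify $c_0$ with $c$ and is not essential here.
\end{itemize}
If the injectivity radius bound in \cite{Rudd} is not stated explicitly, I would extract it from the construction (gluing bounded-geometry pieces) before applying Proposition \ref{prop:MV}.
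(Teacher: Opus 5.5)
Your proposal is correct and follows the paper's proof essentially verbatim: you bound $\tilde c(\omega_n)\le \mu_n^2/2$ via Proposition \ref{lem:mane-eigval-ineq}, bound $c_0(\omega_n)\ge (2C^2V_n)^{-1}$ via Lemma \ref{lem:norm-comp} and the uniform mean-value-inequality, and then divide. Your guess about the source of $V_n^3$ is also right: the paper uses the bound $|\mu_n|\le CV_ne^{-cV_n}$ from Theorem C of \cite{Rudd}, whose square contributes $V_n^2$, and the lower bound on $c_0$ contributes the remaining factor of $V_n$.
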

\begin{proof}
    The examples come from Theorem C in \cite{Rudd}, all of which have trivial first Betti number, and which have smallest curl eigenvalues $\mu_n$ satisfying $|\mu_n|\leq CV_ne^{-cV_n},$ as the curl eigenvalues are square roots of the Hodge Laplacian coexact eigenvalues and the curl eigenforms are coexact Hodge Laplacian eigenforms with eigenvalues $\mu_n^2$. The corresponding eigenforms $\omega_n$ (and their differentials) satisfy a uniform $C$-mean-value-inequality, by Proposition \ref{prop:MV} for some $C$ and the controlled geometry of the manifolds $M_n$. Thus $(2C^2V_n)^{-1}\leq c_0(\omega_n)$ by Lemma \ref{lem:norm-comp} and the mean-value-inequality. After renaming constants, the claim follows from Lemma \ref{lem:mane-eigval-ineq}.
\end{proof}

\section{Explicit constants in dimension 3}\label{sec:appendix}
In this section, we derive an explicit mean-value-inequality for eigenforms with bounded eigenvalue in hyperbolic manifolds of dimension 3.
Note that the scalar Laplacian and the degree zero Hodge Laplacian differ by a sign. In this section, we use $\Delta$ to refer to the scalar Laplacian $\Delta = \text{Div} \circ\nabla = -d^*d= -\Delta_0$ acting on smooth functions, where $\Delta_0$ is the degree zero Hodge Laplacian. For the Laplacian on 1-forms, we will use a subscript indicating the degree, $\Delta_1:\Omega^1(M)\to\Omega^1(M).$

The key to deriving the explicit mean-value-inequality for 1-forms is the following weighted mean-value-inequality for subsolutions of a related scalar equation.

\begin{thm}[Theorem 3.1 \cite{LiNguyen}]\label{thm:H-MVT} Let $a:\H^n\to\R$ be a smooth function. Let $u:\H^n\to\R$ be a smooth function such that $\Delta u + au\geq 0.$ Then for any ball $B_R(x)$, the following holds: $$u(x)\leq \frac{1}{\vol(B_R(x))}\int_{B_R(x)} u(y) w(x,R;y)d\vol(y),$$ where $$w(x,R;y)=1+a(y)\int_{d(x,y)}^R(\sinh \tau)^{n-1}\int_{d(x,y)}^\tau \frac{1}{(\sinh\xi)^{n-1}}d\xi d\tau.$$ 
\end{thm}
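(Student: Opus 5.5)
The plan is to reduce to a one-dimensional radial comparison problem via an auxiliary radial function, and then apply the divergence theorem on geodesic spheres. Fix $x$ and write $r=d(x,y)$. Define the radial weight
$$\psi(y)=\int_{r}^R(\sinh \tau)^{n-1}\int_{r}^\tau \frac{d\xi}{(\sinh\xi)^{n-1}}\,d\tau ,$$
so that $w = 1 + a\psi$. For a radial function $f(r)$ on $\H^n$ we have $\Delta f = f'' + (n-1)\coth r\, f'$. The goal is to choose $\psi$ so that $\Delta\psi = -1$ (or a normalized constant) on $B_R(x)$, with $\psi=0$ and, if possible, $\partial_\nu\psi$ constant on $\d B_R(x)$. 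I would first verify directly that this $\psi$ solves the ODE $(\sinh^{n-1} r\,\psi')' = c\,\sinh^{n-1} r$ for an appropriate sign and constant. If it does not match exactly, I would adjust the normalization so that the identity $\int_B \Delta\psi = \int_{\d B}\partial_\nu \psi$ and the mean-value normalization by $\vol(B_R)$ come out consistently. Differentiating under the integral sign should give this.

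Next, the standard route is Green's second identity on $B_R(x)$ applied to $u$ and a Green-type function. A cleaner approach is to use the mean-value property for subharmonic-type functions in hyperbolic space. For $\Delta v\ge 0$, the spherical mean $m(\rho)$ of $v$ on $\d B_\rho(x)$ is nondecreasing, since $m'(\rho)=\frac{1}{|\d B_\rho|}\int_{B_\rho}\Delta v \ge 0$. More precisely, $m(\rho)-v(x)=\int_0^\rho \frac{1}{|\d B_\tau|}\int_{B_\tau}\Delta v\,d\tau$.

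Apply this with $\Delta u \ge -au$. This gives
$$m(\rho)\ \ge\ u(x) - \int_0^\rho\frac{1}{|\d B_\tau|}\int_{B_\tau} a u\,d\tau .$$
Integrating $m(\rho)|\d B_\rho|$ over $\rho\in[0,R]$ yields
$$\int_{B_R} u\ \ge\ \vol(B_R)\,u(x) - \int_0^R |\d B_\rho|\int_0^\rho \frac{1}{|\d B_\tau|}\int_{B_\tau} au\,d\tau\,d\rho .$$
Then I would use Fubini to rewrite the triple integral as $\int_{B_R} a(y)u(y)K(d(x,y))\,dy$. Using $|\d B_t| = \omega_{n-1}\sinh^{n-1} t$, the $\omega_{n-1}$ factors cancel, and the kernel becomes $K(r)=\int_r^R\sinh^{n-1}\rho\int_r^\rho \sinh^{1-n}\tau\,d\tau\,d\rho$. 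This is exactly the weight appearing in $w$. Rearranging then gives
$$u(x)\le \frac{1}{\vol B_R}\int_{B_R} u(1+aK).$$

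The main obstacle is the Fubini bookkeeping in this last step, that is, confirming that the order of integration over $y\in B_\tau$, $\tau\le\rho\le R$ produces the limits $d(x,y)\le\tau\le\rho\le R$ and hence exactly $w$. Since $u$ and $a$ are smooth, no regularity issues arise. Note that no sign assumption on $u$ or $a$ is needed, because the argument only integrates the differential inequality.
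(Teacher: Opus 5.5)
Your spherical-mean argument is correct and complete. The paper gives no proof of this statement at all; it simply cites \cite{LiNguyen}, so your derivation is a self-contained replacement for that citation.

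The ingredients are the right ones:
\begin{itemize}
\item Since $\exp_x$ is a global diffeomorphism of $\H^n$ and every geodesic sphere about $x$ has area element $\sinh^{n-1}\rho\,d\theta$, you get $m'(\rho)=|\partial B_\rho|^{-1}\int_{B_\rho}\Delta u$ and $\int_{B_R}u=\int_0^R m(\rho)\,|\partial B_\rho|\,d\rho$.
\item The Fubini step is legitimate because $|\partial B_\tau|^{-1}\int_{B_\tau}|au|=O(\tau)$. Equivalently, your kernel $K(r)$ blows up only like $r^{2-n}$ for $n\ge 3$ (logarithmically for $n=2$), so it is integrable against $\sinh^{n-1}r\,dr$.
\item The region $d(x,y)<\tau<\rho<R$ produces exactly the weight in $w$.
\item All weights $|\partial B_\rho|/|\partial B_\tau|$ are positive, so integrating $\Delta u\ge -au$ needs no sign condition on $a$ or $u$, as you say.
\end{itemize}

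Your first paragraph can be dropped, but it also leads to a proof once the sign is fixed. One computes $\sinh^{n-1}r\,\psi'(r)=-\int_r^R\sinh^{n-1}\tau\,d\tau$. Hence $\Delta\psi=+1$ (not $-1$) on $B_R(x)\setminus\{x\}$, and $\psi=\partial_\nu\psi=0$ on $\partial B_R(x)$. The flux of $\nabla\psi$ through small spheres about $x$ tends to $-\vol(B_R)$, i.e.\ $\Delta\psi=1-\vol(B_R)\delta_x$ distributionally. Green's second identity then gives $\vol(B_R)u(x)=\int_{B_R}u-\int_{B_R}\psi\,\Delta u\le\int_{B_R}u\,(1+a\psi)$, using $\psi\ge 0$.

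The two routes are the same computation: yours is this Green identity written out in polar coordinates. The Green's-function version shows directly why the weight is $1+a\psi$ with $\psi\ge 0$. Yours avoids any distributional bookkeeping at the singularity at $x$.
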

The basic observation is that the norm squared function $u(x) = |\omega|^2_x$ for an eigenform $\omega$ is a subsolution to such an equation. 

\begin{lem}\label{lem:subharm}
 Let $M$ be a closed hyperbolic $n$-manifold. Let $\omega$ be a 1-form such that $\Delta_1 \omega=\lambda\omega$ and assume $\lambda\leq \L.$  Consider the function $u(x)=|\omega|_x^2,$ then, $$\Delta u+ (2\L + 2n -2)u\geq 0.$$ In particular, Theorem \ref{thm:H-MVT} applies to $u$ with $a(x) = (2\L + 2n -2)$.
\end{lem}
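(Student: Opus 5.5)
The plan is to use the Bochner--Weitzenböck formula for 1-forms. On a Riemannian manifold, $\Delta_1\omega = \nabla^*\nabla\omega + \mathrm{Ric}(\omega)$. On a hyperbolic $n$-manifold $\mathrm{Ric} = -(n-1)g$, so this reads
$$\nabla^*\nabla\omega = \Delta_1\omega + (n-1)\omega = (\lambda + n - 1)\omega .$$
With the paper's sign convention $\Delta = -\Delta_0$ for the scalar Laplacian, the standard identity for $u = |\omega|^2$ is
$$\tfrac12\Delta|\omega|^2 = |\nabla\omega|^2 - \langle \nabla^*\nabla\omega,\omega\rangle .$$
To justify it, compute in normal coordinates: $\tfrac12\Delta|\omega|^2 = \sum_i \langle\nabla_i\nabla_i\omega,\omega\rangle + |\nabla\omega|^2$, and $\nabla^*\nabla = -\sum_i\nabla_i\nabla_i$ at the center.

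Substituting the Weitzenböck expression gives
$$\tfrac12\Delta u = |\nabla\omega|^2 - (\lambda+n-1)u \geq -(\lambda+n-1)u .$$
Multiplying by $2$ yields $\Delta u + (2\lambda + 2n-2)u \geq 0$. Since $u \geq 0$ and $\lambda \leq \L$, we have $(2\L+2n-2)u \geq (2\lambda+2n-2)u$, so
$$\Delta u + (2\L+2n-2)u \geq 0 .$$

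For the ``in particular'' clause, Theorem \ref{thm:H-MVT} is stated on $\H^n$. We therefore pull $\omega$ back along the universal covering $\H^n\to M$. The pullback $\tilde u$ is smooth, satisfies the same pointwise differential inequality because the covering is a local isometry, and we take the constant function $a = 2\L+2n-2$.

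The only delicate point is bookkeeping of signs: the scalar-Laplacian convention, the sign of the Ricci term, and $\lambda\geq 0$ versus the bound $\lambda\leq\L$. The last of these uses $u\geq0$. No genuine analytic obstacle is expected, since the dropped term $|\nabla\omega|^2$ is nonnegative.
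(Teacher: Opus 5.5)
Your proposal is correct and follows the paper's proof. The paper quotes the Bochner formula $\Delta u = 2|\nabla\omega|^2 - 2(n-1+\lambda)u$, which you derive from the Weitzenb\"ock identity with $\mathrm{Ric} = -(n-1)g$; it then drops the nonnegative gradient term and uses $u\geq 0$ together with $\lambda\leq\L$, exactly as you do, and your pullback to $\H^n$ for the ``in particular'' clause just makes explicit a step the paper leaves implicit.
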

\begin{proof}
Recall the Bochner formula $$\Delta u = 2|\nabla \omega|^2-2(n-1+\lambda)u.$$ Because $u\geq 0 $ and $\lambda\leq \L$, we rearrange and find that $$\Delta u+ (2\L + 2n -2)u\geq\Delta u+(2\lambda + 2n -2)u=   2|\nabla \omega|^2\geq  0.\qedhere$$
\end{proof}
We can therefore apply Theorem \ref{thm:H-MVT} to obtain a weighted mean value inequality for $\omega$; we write this as a separate lemma for convenience. To simplify notation, set $$V_R:=\vol(B_R(x)).$$

\begin{lem}
    Let $M$ be a closed hyperbolic $n$-manifold with injectivity radius at least $R>0.$ Let $\omega$ be a 1-form with $\Delta_1 \omega=\lambda\omega$ and assume $\lambda\leq \L.$  Consider the function $u(x)=|\omega|_x^2,$ then, $$u(x)\leq \frac{1}{V_R}\int_{B_R(x)} u(y)w(x,R;y)d\vol(y),$$ where $w$ is the function from Theorem \ref{thm:H-MVT}, given by
    
    $$w(x,R;y)=1+(2\L + 2n -2)\int_{d(x,y)}^R(\sinh \tau)^{n-1}\int_{d(x,y)}^\tau \frac{1}{(\sinh\xi)^{n-1}}d\xi d\tau.$$ 
\end{lem}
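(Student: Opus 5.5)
The plan is to lift everything to the universal cover and then apply Theorem \ref{thm:H-MVT} directly, with the input supplied by Lemma \ref{lem:subharm}.

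First, let $\pi:\H^n\to M$ be the universal covering. Pull back $\omega$ to $\tilde\omega=\pi^*\omega$. Since $\pi$ is a local isometry, the Hodge Laplacian commutes with pullback, so $\Delta_1\tilde\omega=\lambda\tilde\omega$ on $\H^n$. The function $\tilde u=|\tilde\omega|^2=u\circ\pi$ is smooth on $\H^n$.

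Second, the Bochner computation in Lemma \ref{lem:subharm} is purely local: it uses only the eigenform equation and the constant curvature $-1$ (Ricci equal to $-(n-1)$). It therefore holds verbatim on $\H^n$ and gives
$$\Delta\tilde u+(2\L+2n-2)\tilde u\geq 0.$$
Thus Theorem \ref{thm:H-MVT} applies with the constant function $a\equiv 2\L+2n-2$. For any $\tilde x\in\H^n$ and the radius $R$, this yields
$$\tilde u(\tilde x)\leq \frac{1}{\vol(B_R(\tilde x))}\int_{B_R(\tilde x)}\tilde u(\tilde y)\,w(\tilde x,R;\tilde y)\,d\vol(\tilde y).$$

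Third, we push this inequality back down to $M$, and this is the only step needing care. Since $\inj(M)\geq R$, the map $\pi$ restricted to the open ball $B_R(\tilde x)$ is an isometry onto $B_R(x)$, where $x=\pi(\tilde x)$. Distances are preserved in the sense that $d(\tilde x,\tilde y)=d(x,\pi(\tilde y))$ for $\tilde y$ in the ball, since the radial geodesics from $x$ of length less than $R$ are minimizing. Two consequences follow:
\begin{itemize}
\item $V_R=\vol(B_R(x))=\vol(B_R(\tilde x))$ is the hyperbolic ball volume;
\item the weight $w(\tilde x,R;\tilde y)$, which depends only on $d(\tilde x,\tilde y)$, equals $w(x,R;\pi(\tilde y))$.
\end{itemize}
Changing variables by this isometry converts the integral over $B_R(\tilde x)$ into the integral over $B_R(x)$ and gives the stated inequality.

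A minor point is the boundary case where the injectivity radius exactly equals $R$. This is harmless: the boundary sphere has measure zero, and the open ball still embeds. If needed, one can apply the argument for $R'<R$ and let $R'\to R$, using continuity of $w$ in $R$ and dominated convergence.
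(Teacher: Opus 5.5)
Your proof is correct and follows the paper's route. The paper says the lemma is immediate from Theorem~\ref{thm:H-MVT} and Lemma~\ref{lem:subharm}. Your lift to $\H^n$, followed by pushing the inequality back down through the isometry of $B_R(\tilde x)$ onto $B_R(x)$ given by the injectivity radius bound, just makes explicit the identification of $B_R(x)$ with a hyperbolic ball that the paper leaves implicit.
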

\begin{proof}
    This is immediate from Theorem \ref{thm:H-MVT} and Lemma \ref{lem:subharm}.
\end{proof}

We restrict to 3-manifolds in what follows. The key is to control the $L^2$ norm of $w(x,R;y)$. Fix $R>0$,
consider the function $$ W(s):=\int_{s}^R(\sinh \tau)^{2}\int_{s}^\tau \frac{1}{(\sinh\xi)^{2}}d\xi d\tau.$$  Note that this depends on $R,$ but we suppress that from the function notation as $R$ is fixed. 
Notice that for $x\in\H^3$ and $R>0$, and given any $y\in B_R(x)$ with $s=d(x,y)$, we have  $$w(x,R;y) = 1+(2\L+4)W(s).$$

\begin{lem}\label{lem:explicit}
   Fix a point $x\in \H^3$ and let $y\in B_R(x)$.  Consider the function $W$ above. Then     $$\int_0^RW(s)\sinh(s)^2ds=\frac{1}{8} (R (2 + \cosh(2 R)) - 3 \cosh(R) \sinh(R))$$ and  
\begin{align*}
    \int_{0}^RW(s)^2\sinh(s)^2ds&= \frac{(8R^3 - 12(R^2 - 2)\sinh(2R) + 3\sinh(4R) - 24R\cosh(2R)-36R)}{192}.
\end{align*}
\end{lem}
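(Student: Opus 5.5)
First compute $W$ in closed form, then substitute into the two integrals; everything reduces to elementary functions of $s$ and $R$. The inner integral is immediate, since $\int \sinh^{-2}\xi\,d\xi = -\coth\xi$. Thus
$$\int_s^\tau \frac{d\xi}{\sinh^2\xi} = \coth s - \coth \tau.$$
Hence
$$W(s) = \int_s^R \bigl(\sinh^2\tau\,\coth s - \sinh\tau\cosh\tau\bigr)\,d\tau.$$
We use the antiderivatives $\int \sinh^2\tau\,d\tau = \tfrac14\sinh 2\tau - \tfrac{\tau}{2}$ and $\int\sinh\tau\cosh\tau\,d\tau=\tfrac12\sinh^2\tau$. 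This gives
$$W(s)=\coth s\Bigl(\tfrac14\sinh 2R-\tfrac R2-\tfrac14\sinh 2s+\tfrac s2\Bigr)-\tfrac12\bigl(\sinh^2R-\sinh^2 s\bigr).$$

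Multiplying by $\sinh^2 s$ removes the singularity at $s=0$, because $\coth s\,\sinh^2 s=\tfrac12\sinh 2s$. We get
$$W(s)\sinh^2 s = \tfrac12\sinh 2s\,\Bigl(\tfrac14\sinh 2R-\tfrac R2+\tfrac s2\Bigr)-\tfrac18\sinh^2 2s-\tfrac12\sinh^2 s\,\bigl(\sinh^2R-\sinh^2s\bigr).$$
Every term is now a polynomial in $s$ times hyperbolic functions of $s$, with coefficients depending on $R$. I would then integrate from $0$ to $R$ term by term. Only one integration by parts is needed, for $\int_0^R s\sinh 2s\,ds$. Collecting terms with double-angle identities (for instance $\sinh^2 s=\tfrac12(\cosh 2s-1)$ and $\sinh^4 s=\tfrac18(\cosh 4s-4\cosh 2s+3)$), I expect to reach $\tfrac18\bigl(R(2+\cosh 2R)-3\cosh R\sinh R\bigr)$. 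As a sanity check, both sides should vanish to high order at $R=0$.

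For the second integral, write
$$W(s)^2\sinh^2 s=\frac{\bigl(W(s)\sinh^2 s\bigr)^2}{\sinh^2 s}.$$
Alternatively, expand $W^2\sinh^2 s$ directly from the formula for $W$. The terms involve $\coth^2 s\,\sinh^2 s=\cosh^2 s$, products like $\cosh s\sinh s\,(\ldots)$, and $\sinh^2 s\,(\sinh^2R-\sinh^2s)^2$. Again these are polynomials of degree at most $2$ in $s$ times exponentials in $s$. So the integral reduces to elementary integrals of $s^k\cosh(ms)$ and $s^k\sinh(ms)$ with $k\le 2$ and $m\le 4$. This explains the $R^3$, $R^2\sinh 2R$, and $\sinh 4R$ terms in the claimed formula. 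I would carry this out carefully, possibly with a computer algebra check, and verify the answer by differentiating in $R$.

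The differentiation check is easy here, because $W$ depends on $R$ and $W(R)=0$. Differentiating under the integral gives
$$\frac{d}{dR}\int_0^R W^2\sinh^2 s\,ds=\int_0^R 2W\,\partial_R W\,\sinh^2 s\,ds.$$
Here
$$\partial_R W=\sinh^2R\,(\coth s-\coth R),$$
and the analogous identity holds for the first integral. The main obstacle is the bookkeeping in the squared integral: many cross terms have to cancel to give the compact stated form. The $R$-derivative check, together with the value $0$ at $R=0$, is how I would confirm the stated closed form rather than trusting one long expansion.
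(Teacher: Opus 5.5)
Your proposal is correct and is essentially the paper's argument: the same closed form for $W$ via $\int_s^\tau \sinh^{-2}\xi\,d\xi=\coth s-\coth\tau$, followed by direct elementary integration, which the paper hands to {\tt sage} and {\tt mathematica}; both stated closed forms do check out. One minor correction: your bound $m\le 4$ is not literally right, since the naive expansion of $W^2\sinh^2 s$ produces $e^{\pm 6s}$ terms (e.g.\ from $\cosh^2 s\,\sinh^2 2s$ and $\sinh^6 s$) that cancel, because $W(s)\sinh s=\tfrac14\bigl[(\sinh 2R-2R+2s)\cosh s-2\cosh^2 R\,\sinh s\bigr]$; this only makes the computation easier and does not affect the argument.
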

\begin{proof}

Let $0<s\leq\tau\leq R$. First observe that
$$\int_s^\tau \frac{1}{(\sinh\xi)^2}d\xi =\coth(s)-\coth(\tau). $$
Next, we compute the integral $$W(s) = \int_s^R\sinh(\tau)^2(\coth(s)-\coth(\tau))d\tau,$$ and obtain $$W(s) = \frac{1}{4}(-\cosh(2 R) + \cosh(2 s) + \coth(s) (-2 R + 2 s + \sinh(2 R) - \sinh(2 s))).$$

We now will compute $$C_1 = \int_0^RW(s)\sinh(s)^2ds$$ and $$C_2= \int_0^RW(s)^2\sinh(s)^2ds.$$
Using {\tt sage} to evaluate these integrals, and {\tt{mathematica}} to simplify the results, we obtain:
\begin{align*}
    C_1 &=\frac{1}{8} (R (2 + \cosh(2 R)) - 3 \cosh(R) \sinh(R)).
\end{align*}

and
\begin{align*}
    C_2=\frac{(8R^3 - 12(R^2 - 2)\sinh(2R) + 3\sinh(4R) - 24R\cosh(2R)-36R)}{192} .
\end{align*}

\end{proof}
\begin{lem}\label{lem:explicitl2bound} The function $w(y) = w(x,R;y)$ has $L^2$ norm in the ball $B= B_R(x)$ satisfying
    $$||w(x,R;\cdot)||_{L^2(B)} \leq C_{R,\L}$$ where $$C_{R,\L}^2=V_R+(2\L+4)^24\pi  \int_0^RW(s)^2\sinh^2(s)ds + (4\L+8)4\pi \int_{0}^RW(s)\sinh^2(s)ds,$$ where explicit expressions for the integrals are given in Lemma \ref{lem:explicit}.
\end{lem}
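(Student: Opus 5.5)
Expand the square of $w(y)=1+(2\L+4)W(d(x,y))$ pointwise and integrate over the ball:
$$w^2 = 1 + 2(2\L+4)W + (2\L+4)^2W^2 = 1+(4\L+8)W+(2\L+4)^2W^2 .$$
Integrating over $B=B_R(x)$, the constant term contributes exactly $\vol(B)=V_R$. It therefore remains to evaluate $\int_B W(d(x,y))\,d\vol(y)$ and $\int_B W(d(x,y))^2\,d\vol(y)$.

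Both integrands are radial, so I will use geodesic polar coordinates about $x$ in $\H^3$. The volume element is $d\vol=\sinh^2(s)\,ds\,d\sigma$, with $d\sigma$ the round measure on $S^2$, whose total mass is $4\pi$. This gives
$$\int_B W(d(x,y))^k\,d\vol(y)=4\pi\int_0^R W(s)^k\sinh^2(s)\,ds,\qquad k=1,2.$$
Substituting these into the expansion yields exactly $\|w\|_{L^2(B)}^2 = C_{R,\L}^2$, so the claimed inequality holds with equality. The explicit values of the two radial integrals are the quantities $C_1$ and $C_2$ of Lemma~\ref{lem:explicit}.

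The only point needing care is that the integrals are finite, in particular that $W$ behaves well near $s=0$, where $\coth(s)$ blows up. From the closed form in Lemma~\ref{lem:explicit}, $W(s)\sim \tfrac14\coth(s)\,(\sinh(2R)-2R)=O(1/s)$ as $s\to 0$. Hence $W^2\sinh^2(s)$ is bounded near $0$, and both integrals converge. Note also that $W\ge 0$ and $2\L+4>0$, so $w\ge 1$; nothing further is required. The statement is an inequality only because it is convenient downstream; the computation itself gives equality.
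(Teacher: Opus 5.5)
Your proposal is correct and follows the same argument as the paper: expand $w^2=1+(4\Lambda+8)W+(2\Lambda+4)^2W^2$, then integrate each radial term over $B_R(x)$ in geodesic polar coordinates with volume element $\sinh^2(s)\,ds\,d\sigma$. Two of your remarks make explicit what the paper leaves implicit, and both are accurate: the computation actually gives equality, and $W(s)=O(1/s)$ near $s=0$, so both radial integrals converge.
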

\begin{proof}
    Recall that $w(x,R;y)^2 = 1+(2\L+4)^2W(d(x,y))^2 + (4\L+8)W(d(x,y))$; the claim follows now from the previous two lemmas and the expression of the $L^2$ norm in the ball using polar coordinates.
\end{proof}

We now prove the desired mean value inequality. 

\begin{thm}\label{thm:appendix}
    Let $M$ be a closed hyperbolic $3$-manifold with injectivity radius at least $R>0.$ Let $\omega$ be a 1-form with $\Delta_1 \omega=\lambda\omega$ and assume $\lambda\leq \L.$  Then, $$||\omega||_\infty\leq \frac{C_{R,\L}}{V_R}||\omega||_2$$ where $C_{R,\L}$ is the constant from Lemma \ref{lem:explicitl2bound}.
\end{thm}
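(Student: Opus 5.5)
The plan is to combine the weighted mean-value inequality for $u=|\omega|^2$ with Cauchy--Schwarz, after lifting to hyperbolic space.

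\textbf{Setup.} Choose a point $x\in M$ where $|\omega|_x=\norm{\omega}_\infty$. Lift $\omega$ to $\H^3$; the lifted form is still a $\Delta_1$-eigenform with eigenvalue $\lambda\leq\L$. Fix a lift $\tilde x$ of $x$. Since $\inj(M)\geq R$, the covering map restricted to $B_R(\tilde x)$ is injective (an embedding, up to the boundary, which has measure zero). Therefore integrals over $B_R(\tilde x)$ of lifted functions are bounded by the corresponding integrals over $M$.

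\textbf{Weighted mean-value inequality.} Apply the weighted mean-value lemma (Theorem \ref{thm:H-MVT} together with Lemma \ref{lem:subharm}) to $u(y)=|\omega|_y^2$ on $\H^3$. This gives
$$\norm{\omega}_\infty^2=u(\tilde x)\leq \frac{1}{V_R}\int_{B_R(\tilde x)}u(y)\,w(\tilde x,R;y)\,d\vol(y).$$

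\textbf{Cauchy--Schwarz: the main obstacle.} The key point is to avoid getting $\norm{\omega}_2^2$ times an $L^\infty$ bound of $w$, and instead get the $L^2$ norm of $w$. The natural way is to estimate
$$\int u\,w\leq \norm{u}_{L^2(B)}\norm{w}_{L^2(B)},$$
and then use
$$\norm{u}_{L^2(B)}^2=\int_B|\omega|^4\leq \norm{\omega}_\infty^2\int_B|\omega|^2\leq \norm{\omega}_\infty^2\norm{\omega}_2^2.$$
The last step uses the injectivity of the covering on $B_R(\tilde x)$. Combined with Lemma \ref{lem:explicitl2bound}, this yields
$$\norm{\omega}_\infty^2\leq \frac{C_{R,\L}}{V_R}\,\norm{\omega}_\infty\norm{\omega}_2.$$

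\textbf{Conclusion.} Divide by $\norm{\omega}_\infty$; the case $\omega=0$ is trivial. This gives the claimed inequality $\norm{\omega}_\infty\leq \frac{C_{R,\L}}{V_R}\norm{\omega}_2$.

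One point needs care: $w\geq 1>0$ since $\L\geq 0$ and $W\geq 0$, so all the integrands are nonnegative and Cauchy--Schwarz applies without sign issues. Beyond that, no further obstacle is expected; the explicit integrals are already in Lemma \ref{lem:explicit}.
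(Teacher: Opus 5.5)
Your proof is correct and follows the paper's argument essentially step for step. The steps are the weighted mean-value inequality at a maximum point of $u=|\omega|^2$, Cauchy--Schwarz against $\norm{w}_{L^2(B)}\leq C_{R,\L}$, the bound $\norm{u}_{L^2(B)}\leq\norm{\omega}_\infty\norm{\omega}_{L^2(B)}\leq\norm{\omega}_\infty\norm{\omega}_2$, and division by $\norm{\omega}_\infty$; the only difference is that you lift to $\H^3$ instead of identifying the ball with a hyperbolic ball, which is cosmetic and matches the hypothesis of Theorem \ref{thm:H-MVT}, stated for functions on all of $\H^n$.
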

\begin{proof}
    With $u(x) = |\omega|^2_x$, take $x$ realizing the maximum and consider a ball $B$ of radius $R$ centered at $x$. Because $R<\inj(M)$, we can identify $B$ with a ball in $\H^3$. Then we have $$||\omega||_\infty^2 = u(x)\leq \frac{1}{V_R}\int_{B}u(y)w(x,R;y)d\vol(y).$$ 
    Using Cauchy-Schwarz, we estimate $$\frac{1}{V_R}\int_{B}u(y)w(x,R;y)\leq \frac{1}{V_R}\left(\int_{B}u^2\right)^{1/2}\left(\int_{B}w(x,R;y)^2\right)^{1/2}.$$
    Then, we have $$||\omega||_\infty^2\leq \frac{||w(x,R;\cdot)||_{L^2(B)}}{V_R}||u||_{L^2(B)}\leq \frac{C_{R,\L}}{V_R}||u||_{L^2(B)}.$$
    Notice that since $u = |\omega|^2,$ we have $$||u||_{L^2(B)}=||\omega||_{L^4(B)}^2.$$
    We then estimate $$||\omega||_{L^4(B)}^2\leq ||\omega||_\infty||\omega||_{L^2(B)},$$
    which then gives,
    $$||\omega||_\infty^2\leq \frac{C_{R,\L}}{V_R}||\omega||_\infty||\omega||_{L^2(B)}.$$
Since $||\omega||_{L^2(B)}\leq||\omega||_2$, the claim follows.
\end{proof}
To end, we include an explicit estimate in the special case where $R=1$ and $\L=1,$ which implies Theorem \ref{thm:intro3man}  and Theorem \ref{thm:intro:area-hyp} when combined with our main results.
\begin{cor}
    Let $M$ be a closed hyperbolic 3-manifold with $\inj(M)>1$ and let $\omega$ be a coexact eigen 1-form with eigenvalue $\lambda$ at most $1.$ Then, $$ ||\omega||_\infty\leq 1.04||\omega||_2. $$
\end{cor}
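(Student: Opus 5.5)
The plan is to specialize Theorem \ref{thm:appendix} to $R=1$, $\L=1$, $n=3$. An eigenform with eigenvalue $\lambda\le 1$ on a manifold with $\inj(M)>1$ satisfies its hypotheses, since the unit ball about any point embeds isometrically as a ball in $\H^3$. It then only remains to evaluate $C_{1,1}/V_1$ numerically and check that it is at most $1.04$.

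First I compute the volume of the unit ball in $\H^3$:
$$V_1 = 4\pi\int_0^1\sinh^2 s\,ds = \pi(\sinh 2 - 2)\approx 5.1093.$$
Next I plug $R=1$ into the closed forms of Lemma \ref{lem:explicit}. For the first integral,
$$C_1=\tfrac18\bigl(2+\cosh 2 - 3\cosh 1\sinh 1\bigr) = \tfrac18\bigl(2+\cosh 2-\tfrac32\sinh 2\bigr).$$
With $\cosh 2\approx 3.7622$ and $\sinh 2\approx 3.6269$, this gives $C_1\approx 0.0402$. For the second integral,
$$C_2=\tfrac1{192}\bigl(8+12\sinh 2+3\sinh 4-24\cosh 2-36\bigr).$$
With $\sinh 4\approx 27.2899$, this gives $C_2\approx 0.00049$.

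I then assemble the constant of Lemma \ref{lem:explicitl2bound} with $2\L+4=6$ and $4\L+8=12$:
$$C_{1,1}^2 = V_1 + 36\cdot 4\pi\, C_2 + 12\cdot 4\pi\, C_1 .$$
Numerically this is roughly $5.109+0.22+6.06\approx 11.4$. So $C_{1,1}\approx 3.38$, and $C_{1,1}/V_1\approx 0.66$, which is comfortably below $1.04$. The corollary follows from Theorem \ref{thm:appendix}.

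The main risk is arithmetic. The closed form for $C_2$ involves heavy cancellation (terms near $80$ cancelling down to about $0.1$), so I would double-check it directly. One check is $W(s)\le W(0)$ together with $W(0)\approx\int_0^1\sinh^2\tau\,(\coth 0^+ -\coth\tau)$, which behaves like a finite integral of order $0.1$. Another is a crude bound $W(s)\lesssim 0.2$ on $[0,1]$, which gives $C_2\le 0.04\cdot\int_0^1\sinh^2\approx 0.02$.

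Even using these cruder upper bounds in place of the exact values, the resulting constant stays well under $1.04$. The stated $1.04$ therefore has ample margin, and any stated choice of rounding suffices.
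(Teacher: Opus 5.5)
Your strategy matches the paper's: the corollary is Theorem \ref{thm:appendix} with $R=\L=1$, so everything rests on evaluating $C_{1,1}/V_1$. That evaluation is where the proposal goes wrong. You wrote $C_2=\frac{1}{192}(8+12\sinh 2+3\sinh 4-24\cosh 2-36)$ correctly, but the numerator is $8+43.52+81.87-90.29-36\approx 7.10$, not about $0.09$. Hence $C_2\approx 0.0370$, roughly $75$ times your value; a direct numerical integration of $W(s)^2\sinh^2 s$ on $[0,1]$ confirms this.

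With the correct value:
\begin{itemize}
\item $144\pi C_2\approx 16.73$;
\item $C_{1,1}^2\approx 5.111+16.73+6.07\approx 27.91$, so $C_{1,1}\approx 5.283$;
\item $V_1=\pi(\sinh 2-2)\approx 5.1109$ (not $5.1093$);
\item $C_{1,1}/V_1\approx 1.034$.
\end{itemize}
The corollary therefore still holds, but with a margin below $0.01$, which is presumably why the paper states $1.04$. Your claims of ``ample margin'' and that ``any rounding suffices'' are false: a constant such as $1.03$ would not follow from this route.

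Your sanity checks are also invalid, which is why they did not catch the error. $W$ is not bounded on $(0,1]$. Since $\int_s^\tau\sinh^{-2}\xi\,d\xi=\coth s-\coth\tau$ diverges as $s\to0^+$, you get $W(0)=+\infty$, so ``$W(s)\le W(0)$'' gives nothing. Explicitly, using $\coth s\sinh 2s=2\cosh^2 s$, the paper's formula simplifies to $W(s)=\frac14\bigl[(\sinh 2R-2R+2s)\coth s-2\cosh^2R\bigr]$. At $R=1$ this gives $W(s)\sim(\sinh 2-2)/(4s)$ as $s\to0^+$, and for example $W(1/4)\approx 0.98$.

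The integrals $C_1,C_2$ are finite only because of the $\sinh^2 s$ weight. The ``crude bound'' $W\lesssim 0.2$, and hence $C_2\le 0.02$, is simply wrong, since the true $C_2\approx 0.037$ exceeds it. To make the argument correct, redo the numerics to about three significant figures (or bound each term rigorously) rather than relying on these checks.
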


\footnotesize{
\bibliographystyle{alpha}
\bibliography{bib}}

\end{document}